\documentclass[11pt]{amsart}

\usepackage[a4paper,margin=3cm]{geometry}

\usepackage[T1]{fontenc}
\usepackage[utf8]{inputenc}
\usepackage{lmodern}

\usepackage{amsmath,amssymb,amsfonts,amsthm}
\usepackage{mathtools}
\usepackage{mathrsfs}
\usepackage{bm}

\usepackage{tikz-cd}

\usepackage{enumitem}
\usepackage{cite}

\usepackage[colorlinks=true,
            linkcolor=blue,
            citecolor=blue,
            urlcolor=blue]{hyperref}

\usepackage{doi}

\theoremstyle{plain}
\newtheorem{theorem}{Theorem}[section]
\newtheorem{proposition}[theorem]{Proposition}
\newtheorem{lemma}[theorem]{Lemma}
\newtheorem{corollary}[theorem]{Corollary}

\theoremstyle{definition}
\newtheorem{definition}[theorem]{Definition}
\newtheorem{assumption}[theorem]{Assumption}

\theoremstyle{remark}
\newtheorem{remark}[theorem]{Remark}

\newcommand{\Hilb}{\mathcal H}

\newcommand{\Tr}{\operatorname{Tr}}
\newcommand{\FP}{\operatorname{FP}}
\newcommand{\Id}{\mathbf 1}
\newcommand{\ii}{\mathrm i}
\newcommand{\ee}{\mathrm e}
\newcommand{\dd}{\,\mathrm d}

\title[Spectral cut-off oscillatory integrals]
{Spectral Cut-off Oscillatory Integrals for Non-Autonomous Hamiltonian Evolution Equations}

\author{Jean-Pierre Magnot}
\address{$^*$  Univ. Angers, CNRS, LAREMA, SFR MATHSTIC, F-49000 Angers, France ;\\ 
Lepage Research Institute, 17 novembra 1, 081 16 Presov, Slovakia;
	\\ and \\  Lyc\'ee Jeanne d'Arc, \\ Avenue de Grande Bretagne, \\ 63000 Clermont-Ferrand, France}
    \email{jean-pierr.magnot@ac-clermont.fr; magnot@math.cnrs.fr}
\date{}

\subjclass[2020]{Primary 47D06, 35Q41; Secondary 81Q05, 81S40, 47A58, 58J40}

\keywords{
Non-autonomous Schr\"odinger equations;
unbounded Hamiltonians;
oscillatory integrals;
Feynman path integrals;
spectral cut-off;
Galerkin approximation;
effective Hamiltonians;
Floquet--Magnus expansion;
renormalized traces
}

\begin{document}

\begin{abstract}
We develop a spectral cut-off and time-slicing construction for non-autonomous Hamiltonian evolution equations. Let \(H_0\) be a positive self-adjoint reference operator with compact resolvent on a Hilbert space \(\Hilb\), and let
$$
P_N=\mathbf{1}{[0,N]}(H_0).
$$
For a time-dependent family of generally unbounded symmetric Hamiltonians (H(t)), we consider the finite-dimensional cut-off Hamiltonians
$$
H_N(t)=P_NH(t)P_N.
$$
Their time-sliced propagators admit finite-dimensional state-sum representations and, when suitable configuration-space or phase-space kernels are available, oscillatory integral realizations. We establish commutator conditions implying uniform stability of the cut-off dynamics and construct the full unitary propagator as the strong limit of the finite-dimensional propagators. Additional \(H_0\)-regularity yields the quantitative estimate
$$
\sup{s,t\in I}
|U_N(t,s)P_Nu-U(t,s)u|
\leq
C(1+N)^{-\sigma}|u|{\mu+\sigma}.
$$
For Hamiltonians that are H"older continuous of exponent \(\alpha\) in time, we also prove the joint spectral and time-slicing bound
$$
|U{N,M}(t,s)P_Nu-U(t,s)u|
\leq
C\left((1+N)^{-\sigma}+(1+N)^\mu M^{-\alpha}\right)
|u|_{\mu+\sigma}.
$$
The assumptions are verified for time-dependent Schr"odinger operators and symmetric first-order pseudodifferential Hamiltonians. In the periodic case, the construction is compatible with finite-order Floquet--Magnus coefficients for unbounded operators.
\end{abstract}
\subjclass[2020]{Primary 47D06, 35Q41; Secondary 81S40, 81Q05, 81Q10, 35S30, 47G30, 58J40}
\keywords{
Non-autonomous Schr\"odinger equations;
unbounded Hamiltonians;
spectral cut-off;
Galerkin approximation;
oscillatory integrals;
Feynman path integrals;
Hamiltonian evolution equations;
Floquet--Magnus expansion;
effective Hamiltonians;
pseudodifferential operators;
renormalized traces
}
\maketitle

\section{Introduction}
\label{sec:introduction}

Real-time Hamiltonian evolution is one of the classical meeting points
between functional analysis, partial differential equations and quantum
mechanics.  In its simplest autonomous form, the Schr\"odinger equation
is generated by a self-adjoint operator through Stone's theorem.  In
non-autonomous situations, however, the equation
\[
        \ii\partial_t u(t)=H(t)u(t)
\]
raises a more delicate problem: the family \(H(t)\) is typically
unbounded, its domain may depend on \(t\), and the formal expression
\[
        \mathcal T\exp\left(-\ii\int_s^t H(\tau)\dd\tau\right)
\]
does not by itself define a propagator.  The foundational analytic
background for this question goes back to the theory of evolution
equations in Banach and Hilbert spaces, in particular to the work of
Kato on linear evolution equations and hyperbolic-type equations
\cite{Kato1953,Kato1956,Kato1970,Kato1973,KatoBook}, to Kisy\'nski's
work on abstract Cauchy problems \cite{Kisynski1964}, and to the
semigroup and evolution-equation frameworks developed in
\cite{Yosida1980,Tanabe1979,Pazy1983}.  We shall also use the standard
spectral and self-adjointness background of \cite{ReedSimon1,
ReedSimon2,ReedSimon4}, together with the quadratic form viewpoint of
Simon \cite{Simon1971,Simon1971Annals}.

A second source of motivation is the real-time Feynman path integral.
At the heuristic level, the solution of a Hamiltonian evolution equation
is expected to be represented by an oscillatory integral over paths,
with phase given by the classical action.  This point of view has a long
history, beginning with rigorous formulations of Feynman integrals and
their relation to Schr\"odinger equations
\cite{Nelson1964,AlbeverioHoeghKrohn1976}.  Later developments include
the monograph treatment \cite{AlbeverioHoeghKrohnMazzucchi2008,
Mazzucchi2009}, trace formulae for Schr\"odinger operators
\cite{AlbeverioBlanchardHoeghKrohn1982,
AlbeverioBoutetBrzezniak1996}, finite-dimensional approximation
schemes for oscillatory integrals in infinite dimensions
\cite{AlbeverioBrzezniak1993,AlbeverioBrzezniak1995}, and rigorous
time-slicing approaches
\cite{Fujiwara1979,Fujiwara1980,Fujiwara2017}.  Polynomially growing
and time-dependent potentials have been studied by Feynman integral
methods in \cite{AlbeverioMazzucchi2005,
AlbeverioMazzucchi2005CRAS,AlbeverioMazzucchi2006}, and magnetic
Hamiltonians in \cite{AlbeverioCangiottiMazzucchi2019}.  These works
show that oscillatory representations may be made rigorous in many
important situations, but also that infinite-dimensional oscillatory
measures should not be treated as ordinary measures.

The purpose of the present paper is to develop a functional analytic
construction of real-time oscillatory amplitudes by means of spectral
cut-offs.  Rather than postulating an infinite-dimensional oscillatory
integral, we begin with a positive self-adjoint reference operator
\(H_0\) on a Hilbert space \(\Hilb\), assume for simplicity that
\(H_0\) has compact resolvent, and introduce the spectral projections
\[
        P_N=\mathbf 1_{[0,N]}(H_0).
\]
For a time-dependent Hamiltonian \(H(t)\), we define the finite
dimensional cut-off Hamiltonians
\[
        H_N(t)=P_NH(t)P_N
\]
acting on \(P_N\Hilb\).  For each \(N\), the associated propagator
\(U_N(t,s)\) is generated by a finite-dimensional Schr\"odinger
equation.  Time slicing gives a finite-dimensional state-sum
representation of this propagator.  In realizations for which the
one-step kernels admit suitable phases and amplitudes, the same
construction yields genuine finite-dimensional oscillatory integrals.
The solution of the original equation is then obtained, under suitable
assumptions, as the strong limit
\[
        U(t,s)u
        =
        \lim_{N\to\infty}\lim_{M\to\infty}
        U_{N,M}(t,s)P_Nu,
\]
where \(U_{N,M}(t,s)\) denotes the time-sliced cut-off propagator.  When
an oscillatory realization exists, one may equivalently write
\(U_{N,M}(t,s)=I_{N,M}(t,s)\).

The key point is that the convergence problem is transferred from
measure theory on path space to strong convergence of propagators.  The
proof uses Duhamel's formula and estimates relative to the scale
defined by \(H_0\).  This connects naturally with the classical
Trotter--Kato philosophy and product formulae
\cite{Trotter1959,Chernoff1968,Kurtz1969,IchinoseTamura1997}.  It also
fits the modern theory of non-autonomous Schr\"odinger equations with
time-dependent Hamiltonians, in particular in the case of constant form
domains and stability estimates for time-dependent quantum systems
\cite{Balmaseda2022,Balmaseda2024}.

The spectral cut-off construction is not tied to a particular
coordinate representation.  It applies to several standard classes of
Hamiltonians once the appropriate \(H_0\)-Sobolev scale is chosen.  On
compact manifolds, differential and pseudodifferential Hamiltonians are
controlled by elliptic reference operators.  We shall use the standard
pseudodifferential and microlocal tools of
\cite{HormanderI,HormanderIII,Shubin1987,Taylor1981,Taylor1991}, the
complex powers of elliptic operators introduced by Seeley
\cite{Seeley1967}, and the semiclassical background of
\cite{Robert1987,DimassiSjostrand1999}.  The spectral geometry of
elliptic operators and wave traces, as developed for example in
\cite{DuistermaatGuillemin1975,GuilleminSternberg1977,
Melrose1982,Melrose1995}, provides an important context for later trace
interpretations.  Dirac-type and matrix-valued Hamiltonians are also
included, with the geometric background of
\cite{LawsonMichelsohn1989,BaerBallmann2016}; quadratic Hamiltonians and
harmonic oscillator cut-offs are naturally related to phase-space
analysis and the metaplectic formalism \cite{Folland1989}.

One of the motivations for using spectral cut-offs is that they provide
a common framework for finite-dimensional state-sums, oscillatory
realizations, Galerkin approximations and renormalized trace procedures.
Although the main part of the paper concerns strong convergence of propagators,
we also discuss, in an appendix, how one may use the same cut-offs to
define finite or renormalized real-time amplitudes.  This connects with
the theory of residues and regularized traces for pseudodifferential
operators, including the noncommutative residue
\cite{Wodzicki1984}, the determinant and canonical trace constructions
of Kontsevich and Vishik \cite{KontsevichVishik1994}, log-polyhomogeneous
symbols \cite{Lesch1999}, and regularized integrals and traces
\cite{PaychaScott2007,Paycha2012}.  Conceptually, this is also close to
the use of normalized means for infinite-volume objects and heuristic
infinite-dimensional Lebesgue measures developed in
\cite{Magnot2017MeanValue}.  In the present paper, however, these
renormalized traces are not used to prove the main convergence theorem;
they are presented as a natural continuation of the cut-off
construction.

A further motivation comes from periodically driven quantum systems.
If \(H(t)\) is periodic in time, the high-frequency regime is often
described by effective Hamiltonians.  In finite dimension, this leads to
the Floquet--Magnus expansion, whose origin goes back to Magnus
\cite{Magnus1954} and whose modern formulations and applications are
surveyed in \cite{BlanesCasasOteoRos2009}.  In physics, effective
Hamiltonians are central in Floquet engineering and periodically driven
systems \cite{GoldmanDalibard2014,BukovDAlessioPolkovnikov2015}.
Recent estimates for the Floquet--Magnus expansion and applications to
the quantum Rabi model are given in
\cite{DeyLonigroYuasaBurgarth2025}.  The unbounded case is much more
delicate, since commutators of unbounded operators may fail to be
densely defined or self-adjoint.  The recent work
\cite{BurgarthHillierLonigroRichter2026} develops a rigorous
Floquet--Magnus theory for a broad class of unbounded Hamiltonians.  Our
spectral cut-off approach is complementary: at each finite cut-off one
has a classical finite-dimensional Floquet--Magnus expansion, and one
can then study the convergence of the finite-order effective
Hamiltonians as the cut-off is removed.

We now describe the main contributions of the paper.

First, we construct finite-dimensional state-sum amplitudes by combining
spectral cut-offs with time slicing.  The
spectral parameter \(N\) truncates the energy scale associated with
\(H_0\), while the time-slicing parameter \(M\) gives a product formula
for the cut-off propagator.  These state-sums are defined in every
finite-dimensional cut-off space.  When a configuration-space,
phase-space, or coherent-state realization is available, they give
finite-dimensional oscillatory integrals with specified one-step kernels.

Second, under \(H_0\)-relative assumptions on \(H(t)\), we prove strong
convergence of the cut-off propagators:
\[
        U_N(t,s)P_Nu\longrightarrow U(t,s)u
\]
uniformly for \((t,s)\) in compact time intervals.  The proof is based
on the comparison between the projected exact dynamics
\(P_NU(t,s)u\) and the finite-dimensional cut-off dynamics
\(U_N(t,s)P_Nu\).  The only error term is
\[
        P_NH(t)(I-P_N)U(t,s)u,
\]
which tends to zero because the high-energy tail of the exact solution
vanishes in the required \(H_0\)-Sobolev norm.  Additional regularity
gives the quantitative estimate
\[
        \sup_{s,t\in I}
        \|U_N(t,s)P_Nu-U(t,s)u\|
        \leq
        C(1+N)^{-\sigma}\|u\|_{\mu+\sigma}.
\]
We also give a commutator criterion implying uniform stability of the
cut-off propagators.  Under this criterion, the full propagator need not
be assumed in advance: it is constructed as the uniform strong limit of
the finite-dimensional propagators.  If \(H(t)\) is H\"older continuous
of exponent \(\alpha\) in time, the spectral and time-slicing cut-offs
may be removed jointly, with
\[
\begin{aligned}
        &\|U_{N,M}(t,s)P_Nu-U(t,s)u\| \\
        &\qquad\leq
        C\left((1+N)^{-\sigma}+(1+N)^\mu M^{-\alpha}\right)
        \|u\|_{\mu+\sigma}.
\end{aligned}
\]
Thus any choice satisfying
\((1+N)^\mu M(N)^{-\alpha}\to0\) gives a joint approximation.

Third, in the periodic case, we study the interaction between spectral
cut-offs and finite-dimensional Floquet--Magnus theory.  For each
cut-off \(N\), the coefficients
\[
        H_{\mathrm{FM},N}^{[\ell]}
\]
are finite-dimensional matrices.  We prove that, on smooth vectors,
these coefficients converge to the formal unbounded
Floquet--Magnus coefficients:
\[
        H_{\mathrm{FM},N}^{[\ell]}P_Nu
        \longrightarrow
        H_{\mathrm{FM}}^{[\ell]}u.
\]
Thus the cut-off construction is compatible with finite-order effective
Hamiltonians.

Fourth, we give a family of examples showing that the abstract
hypotheses are natural.  For Schr\"odinger operators with time-dependent
potentials and symmetric first-order pseudodifferential Hamiltonians,
we verify the commutator estimates and obtain construction, Sobolev
stability, and quantitative convergence of the spectral approximations.
The framework also covers higher-order pseudodifferential Hamiltonians
compatible with the reference scale, Dirac-type systems, quadratic
Hamiltonians controlled by the harmonic oscillator, log-polyhomogeneous
pseudodifferential operators, smoothing operators, and abstract
Kato--Rellich perturbations.

The paper is organized as follows.  Section~\ref{sec:cutoff-oscillatory-integrals}
introduces spectral cut-off oscillatory integrals and the associated
time-sliced finite-dimensional propagators.  Section~\ref{sec:convergence-cutoff-dynamics}
proves qualitative and quantitative convergence, establishes uniform
stability from commutator estimates, constructs the full propagator, and
treats the joint removal of the spectral and temporal cut-offs.
Section~\ref{sec:periodic-effective-hamiltonians} treats periodic
Hamiltonians and finite-order effective Hamiltonians.  Section~\ref{sec:examples}
presents the main examples.  Appendix~\ref{app:effective-hamiltonians}
recalls the role of effective Hamiltonians in periodically driven
quantum systems, while Appendix~\ref{app:cutoff-traces} explains how
spectral cut-offs may be used to define finite-part traces and
renormalized real-time amplitudes. Appendix~\ref{app:effective-hamiltonians}
recalls the role of effective Hamiltonians in periodically driven
quantum systems, while Appendix~\ref{app:cutoff-traces} explains how
spectral cut-offs may be used to define finite-part traces and
renormalized real-time amplitudes.

\section{Spectral cut-off oscillatory integrals}
\label{sec:cutoff-oscillatory-integrals}

The aim of this section is to introduce the finite-dimensional
time-sliced objects from which the solutions will be constructed.  The
basic idea is simple: instead of postulating an infinite-dimensional
Feynman integral, we first project the Hamiltonian evolution onto
finite-dimensional spectral subspaces of a reference operator.  On
each such subspace, the propagator admits an exact finite-dimensional
state-sum representation.  When an additional configuration-space,
phase-space, or coherent-state realization is available, the same
time-sliced operators may also admit oscillatory integral
representations.  The full infinite-dimensional solution will then be
recovered by removing the spectral and temporal cut-offs.

\subsection{Reference operator and spectral cut-offs}

Let \(\Hilb\) be a complex separable Hilbert space.  We fix a positive
self-adjoint operator
\[
        H_0:D(H_0)\subset \Hilb\longrightarrow \Hilb.
\]
Throughout this section we assume that \(H_0\) has compact resolvent.
Thus its spectrum consists of a non-decreasing sequence of eigenvalues,
repeated according to multiplicity,
\[
        0\leq \lambda_1\leq \lambda_2\leq \cdots,
        \qquad
        \lambda_j\to+\infty,
\]
and \(\Hilb\) admits an orthonormal basis of eigenvectors of \(H_0\).

For \(N>0\), we denote by
\[
        P_N=\mathbf 1_{[0,N]}(H_0)
\]
the spectral projection of \(H_0\) associated with the interval
\([0,N]\).  We set
\[
        \Hilb_N=P_N\Hilb.
\]
Since \(H_0\) has compact resolvent, \(\Hilb_N\) is finite-dimensional.
The projections \(P_N\) satisfy
\[
        P_Nu\longrightarrow u
\]
strongly in \(\Hilb\) as \(N\to+\infty\).

The reference operator \(H_0\) also defines a scale of Hilbert spaces.
For \(s\geq0\), we set
\[
        \Hilb^s=D\big((1+H_0)^s\big),
        \qquad
        \|u\|_s=\|(1+H_0)^s u\|.
\]
We also write
\[
        \Hilb^\infty
        =
        C^\infty(H_0)
        =
        \bigcap_{s\geq0}\Hilb^s.
\]
The space \(\Hilb^\infty\) will play the role of a common smooth core.

\begin{remark}
The compact-resolvent assumption is natural for the construction of
finite-dimensional spectral cut-offs.  It can be relaxed if one replaces
the projections \(P_N\) by suitable finite-rank approximations or by
localized spectral cut-offs.  We keep the compact-resolvent setting in
order to isolate the main mechanism.
\end{remark}

\subsection{Cut-off Hamiltonians}

Let \(I\subset\mathbb R\) be a time interval, and let
\[
        t\in I\longmapsto H(t)
\]
be a family of symmetric operators on \(\Hilb\).  We assume, at this
stage, only that
\[
        \Hilb^\infty\subset D(H(t)),
        \qquad
        H(t)\Hilb^\infty\subset \Hilb
\]
for every \(t\in I\), and that the maps
\[
        t\longmapsto H(t)u
\]
are continuous for every \(u\in \Hilb^\infty\).  Stronger assumptions
will be introduced later in order to prove convergence of the cut-off
dynamics.

For each \(N\), we define the cut-off Hamiltonian
\[
        H_N(t)=P_NH(t)P_N
\]
as an operator on \(\Hilb_N\).  Since \(\Hilb_N\) is finite-dimensional,
\(H_N(t)\) is a bounded operator on \(\Hilb_N\).  If \(H(t)\) is
symmetric, then \(H_N(t)\) is self-adjoint on \(\Hilb_N\).  Therefore
the finite-dimensional non-autonomous Schr\"odinger equation
\[
        \ii\partial_t u_N(t)=H_N(t)u_N(t),
        \qquad
        u_N(s)=u_{N,s}\in \Hilb_N,
\]
has a unique unitary propagator
\[
        U_N(t,s):\Hilb_N\to\Hilb_N.
\]
It is characterized by
\[
        U_N(s,s)=\Id_{\Hilb_N},
        \qquad
        U_N(t,r)U_N(r,s)=U_N(t,s),
\]
and
\[
        \ii\partial_t U_N(t,s)u=H_N(t)U_N(t,s)u.
\]

For an initial datum \(u_0\in\Hilb\), the cut-off solution is
\[
        u_N(t)=U_N(t,0)P_Nu_0.
\]
The main convergence problem, treated in the next sections, is to find
conditions under which
\[
        u_N(t)\longrightarrow u(t)
\]
strongly in \(\Hilb\), uniformly for \(t\) in compact intervals, where
\(u(t)\) solves the original Hamiltonian equation
\[
        \ii\partial_t u(t)=H(t)u(t),
        \qquad
        u(0)=u_0.
\]

\subsection{Time-sliced products}

Fix \(N\) and let
\[
        s=t_0<t_1<\cdots<t_M=t
\]
be a partition of the interval \([s,t]\).  We write
\[
        \Delta t_j=t_{j+1}-t_j.
\]
The time-sliced approximation of \(U_N(t,s)\) is defined by
\[
        U_{N,M}(t,s)
        =
        \exp\big(-\ii\Delta t_{M-1}H_N(t_{M-1})\big)
        \cdots
        \exp\big(-\ii\Delta t_0H_N(t_0)\big).
\]
Equivalently,
\[
        U_{N,M}(t,s)
        =
        \prod_{j=M-1}^{0}
        \exp\big(-\ii\Delta t_jH_N(t_j)\big),
\]
where the product is ordered from right to left in increasing time.

Since \(H_N(t)\) is a continuous family of matrices, the standard
finite-dimensional theory of non-autonomous linear systems gives
\[
        U_{N,M}(t,s)\longrightarrow U_N(t,s)
\]
as the mesh of the partition tends to zero:
\[
        |\Pi_M|
        :=
        \max_{0\leq j\leq M-1}\Delta t_j
        \longrightarrow0.
\]
Thus, for fixed \(N\), the propagator \(U_N(t,s)\) is already obtained
as the limit of elementary oscillatory factors.

\subsection{Finite-dimensional state-sum representation}
\label{subsec:finite-dimensional-state-sum}

The time-sliced product introduced above admits a canonical finite-dimensional
representation as a sum over intermediate states. This representation should
be distinguished from a configuration-space oscillatory integral, which
requires additional geometric structure.

Let
\[
d_{N}=\dim\mathcal{H}_{N},
\]
and choose an orthonormal basis
\[
\mathcal{B}_{N}=(e_{N,1},\ldots,e_{N,d_{N}})
\]
of \(\mathcal{H}_{N}\). For a partition
\[
\Pi_{M}=\{s=t_{0}<t_{1}<\cdots<t_{M}=t\},
\]
set
\[
E_{N,j}
=
\exp\bigl(-\mathrm{i}\Delta t_{j}H_{N}(t_{j})\bigr),
\qquad
0\leq j\leq M-1.
\]
Thus
\[
U_{N,M}(t,s)
=
E_{N,M-1}\cdots E_{N,0}.
\]

For \(a,b\in\{1,\ldots,d_{N}\}\), write
\[
E_{N,j}(a,b)
=
\langle e_{N,a},E_{N,j}e_{N,b}\rangle.
\]
Repeated insertion of the identity
\[
I_{\mathcal{H}_{N}}
=
\sum_{a=1}^{d_{N}}
|e_{N,a}\rangle\langle e_{N,a}|
\]
gives the following exact formula:
\[
\begin{split}
&\langle e_{N,a_{M}},U_{N,M}(t,s)e_{N,a_{0}}\rangle
\\
&\quad=
\sum_{a_{1},\ldots,a_{M-1}=1}^{d_{N}}
\prod_{j=0}^{M-1}
E_{N,j}(a_{j+1},a_{j}).
\end{split}
\]

\begin{definition}[Spectral cut-off state-sum amplitude]
\label{def:spectral-cutoff-state-sum}
For fixed \(N\), \(M\), and endpoints \(a_{0},a_{M}\), the quantity
\[
\mathcal{A}_{N,M}(a_{M},a_{0};t,s)
=
\sum_{a_{1},\ldots,a_{M-1}=1}^{d_{N}}
\prod_{j=0}^{M-1}
E_{N,j}(a_{j+1},a_{j})
\]
is called the spectral cut-off state-sum amplitude associated with the basis
\(\mathcal{B}_{N}\) and the partition \(\Pi_{M}\).
\end{definition}

By construction,
\[
\mathcal{A}_{N,M}(a_{M},a_{0};t,s)
=
\langle e_{N,a_{M}},U_{N,M}(t,s)e_{N,a_{0}}\rangle.
\]
The state-sum is therefore an exact representation of the time-sliced
propagator and not an additional analytic object.

\begin{remark}
\label{rem:basis-dependence-state-sum}
The intermediate state-sum depends on the choice of the orthonormal basis
\(\mathcal{B}_{N}\), whereas the resulting operator \(U_{N,M}(t,s)\) does not.
In an abstract finite-dimensional Hilbert space, insertion of an orthonormal
basis produces a finite sum, not an integral over
\(\mathbb{R}^{d_{N}}\). A continuous configuration-space integral requires a
realization of \(\mathcal{H}_{N}\) as a space of functions, or a continuous
resolution of the identity such as a coherent-state representation.
\end{remark}

For \(u\in\mathcal{H}\), one obtains
\[
\begin{split}
U_{N,M}(t,s)P_{N}u
={}&
\sum_{a_{M}=1}^{d_{N}}
\sum_{a_{0}=1}^{d_{N}}
\mathcal{A}_{N,M}(a_{M},a_{0};t,s)
\\
&\qquad\qquad\times
\langle e_{N,a_{0}},u\rangle e_{N,a_{M}}.
\end{split}
\]
Consequently,
\[
\lim_{|\Pi_{M}|\to 0}
\mathcal{A}_{N,M}(a,b;t,s)
=
\langle e_{N,a},U_{N}(t,s)e_{N,b}\rangle
\]
for each fixed \(N\) and \(a,b\).

\subsection{Exact configuration-space kernel formula}
\label{subsec:configuration-space-kernel}

A genuine integral representation is available when the ambient Hilbert space
has a configuration-space realization. Let \((X,\mu)\) be a
\(\sigma\)-finite measure space and assume that
\[
\mathcal{H}=L^{2}(X,\mu).
\]
Suppose that the range of \(P_{N}\) is spanned by an orthonormal family
\[
\varphi_{N,1},\ldots,\varphi_{N,d_{N}}.
\]
The integral kernel of \(P_{N}\) is then
\[
\Pi_{N}(x,y)
=
\sum_{a=1}^{d_{N}}
\varphi_{N,a}(x)\overline{\varphi_{N,a}(y)}.
\]

For each time slice, the finite-rank operator
\[
E_{N,j}
=
\exp\bigl(-\mathrm{i}\Delta t_{j}H_{N}(t_{j})\bigr)
\]
has the kernel
\[
K_{N,j}(x,y)
=
\sum_{a,b=1}^{d_{N}}
E_{N,j}(a,b)
\varphi_{N,a}(x)\overline{\varphi_{N,b}(y)}.
\]

\begin{proposition}[Exact time-sliced kernel formula]
\label{prop:exact-time-sliced-kernel}
Let \(u\in\mathcal{H}_{N}\). Then
\[
\begin{split}
&(U_{N,M}(t,s)u)(x_{M})
\\
&\quad=
\int_{X^{M}}
\left(
\prod_{j=0}^{M-1}
K_{N,j}(x_{j+1},x_{j})
\right)
u(x_{0})\,
d\mu(x_{0})\cdots d\mu(x_{M-1})
\end{split}
\]
for almost every \(x_{M}\in X\).
\end{proposition}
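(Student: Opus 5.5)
The plan is to show that each $K_{N,j}$ is the integral kernel of the operator $E_{N,j}$ acting on $\mathcal H_N$, and then to compose these kernels by iterating a single one-step identity. Since everything is a finite sum, every integral appearing is absolutely convergent, so Fubini's theorem applies without difficulty.

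First, for $v\in\mathcal H_N$ expand $v=\sum_b\langle\varphi_{N,b},v\rangle\varphi_{N,b}$. Because $E_{N,j}$ maps $\mathcal H_N$ into itself,
\[
E_{N,j}v=\sum_{a,b}E_{N,j}(a,b)\langle\varphi_{N,b},v\rangle\varphi_{N,a}.
\]
Writing $\langle\varphi_{N,b},v\rangle=\int_X\overline{\varphi_{N,b}(y)}v(y)\,d\mu(y)$ and exchanging this integral with the finite sum gives
\[
(E_{N,j}v)(x)=\int_X K_{N,j}(x,y)v(y)\,d\mu(y)
\]
for a.e. $x$. The integral converges absolutely because each product $\varphi_{N,b}v$ lies in $L^1$ by Cauchy--Schwarz. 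The right-hand side is defined pointwise for every $x$ at which the finitely many representatives $\varphi_{N,a}(x)$ are defined. I fix representatives once and for all, so the kernel formula holds as a genuine pointwise function, and it equals $E_{N,j}v$ as an element of $L^2$.

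Next, I argue by induction on $M$. Set $v_k=E_{N,k-1}\cdots E_{N,0}u\in\mathcal H_N$, so that $v_0=u$. The induction hypothesis is that $v_k(x_k)$ equals the $k$-fold iterated integral
\[
\int_{X^k}\prod_{j<k}K_{N,j}(x_{j+1},x_j)\,u(x_0)\,d\mu(x_0)\cdots d\mu(x_{k-1}).
\]
To pass from $k$ to $k+1$, apply the one-step identity to $v=v_k$ and substitute this representation of $v_k(x_k)$ inside the integral.

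The only point needing care is that the resulting iterated integral can be regarded as an integral over $X^{k+1}$. This requires the integrand to be absolutely integrable. Expanding each kernel into its finite sum reduces the integrand to finitely many terms of the form
\[
\varphi_{N,a_{k+1}}(x_{k+1})\,\prod_{j=1}^{k}\overline{\varphi_{N,b_j}(x_j)}\varphi_{N,a_j}(x_j)\;\overline{\varphi_{N,b_0}(x_0)}u(x_0).
\]
Each term is a product of functions of the separate variables, and each factor is in $L^1(X,\mu)$: the middle factors $\overline{\varphi}\varphi$ are products of $L^2$ functions, and the last factor is $\overline{\varphi}u$. Tonelli's theorem then gives absolute integrability on $X^{k}$ for fixed $x_{k+1}$, and Fubini's theorem justifies both combining the integrals and rearranging the order. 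This bookkeeping is the main, if mild, obstacle.

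Taking $k=M$ yields the stated formula for a.e. $x_M$. As an alternative check, integrating out each variable $x_j$ in the expanded form produces $\delta_{b_j a_j}$ by orthonormality. This collapses the integral to exactly the state-sum $\mathcal A_{N,M}(a_M,b_0;t,s)$ of Definition~\ref{def:spectral-cutoff-state-sum}, paired with $\langle\varphi_{N,b_0},u\rangle$, which reproduces the expansion of $U_{N,M}(t,s)u$ given in Subsection~\ref{subsec:finite-dimensional-state-sum}.
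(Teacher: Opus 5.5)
Your proof is correct and follows the same route as the paper: you identify $K_{N,j}$ as the kernel of $E_{N,j}$ on $\mathcal H_N$, compose the kernels by iteration, and note the equivalent collapse to the state-sum of Definition~\ref{def:spectral-cutoff-state-sum} by orthonormality. Your expansion into finite sums of product-type $L^1$ terms supplies the Tonelli--Fubini justification (which uses the $\sigma$-finiteness of $\mu$) that the paper compresses into its one-line remark that the multiple integral is well defined. The only slip is a minor indexing one: in the step $k\to k+1$ the integration runs over $X^{k+1}$, not $X^{k}$.
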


\begin{proof}
Each \(E_{N,j}\) is a finite-rank integral operator with kernel \(K_{N,j}\).
The formula follows by repeated composition of the kernels. Since all the
operators involved have finite-dimensional range, the multiple integral is
well defined in the \(L^{2}\)-sense. Equivalently, expanding every kernel in
the orthonormal basis reduces the formula to the finite state-sum of
Definition~\ref{def:spectral-cutoff-state-sum}.
\end{proof}

\begin{remark}
\label{rem:dimension-configuration-space}
The integration variables \(x_{j}\) belong to the configuration space \(X\).
Their dimension, when \(X\) is a manifold, is \(\dim X\) and not
\(d_{N}=\dim\mathcal{H}_{N}\). The spectral cut-off changes the rank of the
kernels but does not change the underlying configuration space.
\end{remark}

\subsection{Oscillatory realizations}
\label{subsec:oscillatory-realizations}

The kernel formula of
Proposition~\ref{prop:exact-time-sliced-kernel} is always available in a
configuration-space realization, but it is not automatically a classical
oscillatory integral. Such a representation requires additional information
on the one-step kernels.

\begin{definition}[Oscillatory realization of a one-step kernel]
\label{def:oscillatory-one-step-kernel}
An oscillatory realization of \(K_{N,j}\) is a representation
\[
K_{N,j}(x,y)
=
\operatorname{Os}
\int_{\mathbb{R}^{q_{N,j}}}
\exp\bigl(\mathrm{i}\Phi_{N,j}(x,y,\theta)\bigr)
a_{N,j}(x,y,\theta)\,d\theta,
\]
where \(\Phi_{N,j}\) is a real-valued phase function and
\(a_{N,j}\) is an amplitude belonging to a specified symbol class.
The notation \(\operatorname{Os}\int\) denotes the corresponding
regularized oscillatory integral.
\end{definition}

This definition includes the case \(q_{N,j}=0\), in which the kernel is
already given by
\[
K_{N,j}(x,y)
=
a_{N,j}(x,y)
\exp\bigl(\mathrm{i}\Phi_{N,j}(x,y)\bigr).
\]
However, the existence of such a factorization is not asserted in the
abstract Hilbert-space setting.

\begin{proposition}[Composition of oscillatory one-step kernels]
\label{prop:composition-oscillatory-kernels}
Assume that every one-step kernel \(K_{N,j}\) admits an oscillatory
realization as in
Definition~\ref{def:oscillatory-one-step-kernel}, and that the amplitudes and
phases satisfy conditions allowing the corresponding oscillatory integrals
to be composed. Then the kernel of \(U_{N,M}(t,s)\) admits the representation
\[
\begin{split}
K_{N,M}(x_{M},x_{0})
={}&
\operatorname{Os}
\int_{X^{M-1}}
\int_{\mathbb{R}^{q_{N,0}+\cdots+q_{N,M-1}}}
\\
&\exp\left(
\mathrm{i}
\sum_{j=0}^{M-1}
\Phi_{N,j}(x_{j+1},x_{j},\theta_{j})
\right)
\\
&\times
\prod_{j=0}^{M-1}
a_{N,j}(x_{j+1},x_{j},\theta_{j})
\\
&\times
d\theta_{0}\cdots d\theta_{M-1}
\,d\mu(x_{1})\cdots d\mu(x_{M-1}).
\end{split}
\]
\end{proposition}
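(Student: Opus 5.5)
The plan is to start from Proposition~\ref{prop:exact-time-sliced-kernel}, which already expresses the kernel of \(U_{N,M}(t,s)\) as an iterated composition of the one-step kernels. Concretely, writing \(U_{N,M}(t,s)=E_{N,M-1}\cdots E_{N,0}\) and composing finite-rank integral operators, I would first record that for almost every \((x_M,x_0)\)
\[
K_{N,M}(x_M,x_0)=\int_{X^{M-1}}\prod_{j=0}^{M-1}K_{N,j}(x_{j+1},x_j)\,d\mu(x_1)\cdots d\mu(x_{M-1}).
\]
This identity needs no oscillatory structure. Each \(K_{N,j}\) is a finite sum of products \(\varphi_{N,a}(x)\overline{\varphi_{N,b}(y)}\) with \(L^2\) factors, so the integrand is a finite sum of products of \(L^2\) functions in each intermediate variable. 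The iterated integral is therefore absolutely convergent by Cauchy--Schwarz, and Fubini allows the \(x_1,\dots,x_{M-1}\) integrations to be done in any order. I would make the kernel identity precise by testing against \(u(x_0)\) and comparing with Proposition~\ref{prop:exact-time-sliced-kernel}.

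Next I would substitute the oscillatory realization of Definition~\ref{def:oscillatory-one-step-kernel} for each factor. The product of the \(M\) oscillatory integrals, in independent variables \(\theta_j\in\mathbb{R}^{q_{N,j}}\), is to be identified with a single oscillatory integral over \(\mathbb{R}^{q_{N,0}+\cdots+q_{N,M-1}}\). Its phase is the sum \(\sum_j\Phi_{N,j}\) and its amplitude is the product \(\prod_j a_{N,j}\).

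To make this rigorous I would use the standard regularization. Pick a cut-off \(\chi\in C_c^\infty\) with \(\chi(0)=1\) and set
\[
\operatorname{Os}\int=\lim_{\varepsilon\to0}\int \chi(\varepsilon\theta)\cdots .
\]
For fixed \(\varepsilon\) every integral is absolutely convergent, so Fubini applies. The product of the regularized one-step kernels is then the regularized integral over all \(\theta\) with cut-off \(\prod_j\chi(\varepsilon\theta_j)\). This is again an admissible regularizing family on the product space.

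The main obstacle is exchanging the limit \(\varepsilon\to0\) with the \(x\)-integrations and with the product. This is exactly where the hypothesis ``conditions allowing the corresponding oscillatory integrals to be composed'' enters. I would interpret it as two requirements.

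\begin{enumerate}
\item[(i)] The regularized one-step kernels \(K_{N,j}^\varepsilon\) converge to \(K_{N,j}\) in a topology stable under composition, for instance in \(L^2(X\times X)\) (Hilbert--Schmidt norm), with bounds uniform in \(\varepsilon\).
\item[(ii)] The combined phase and amplitude define an oscillatory integral on the product space that is independent of the choice of regularization.
\end{enumerate}

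Given (i), composition of Hilbert--Schmidt operators is jointly continuous. Hence the composed regularized kernels converge to \(K_{N,M}\) in \(L^2\), and a subsequence converges almost everywhere. Given (ii), the limit of the composed regularized integrals equals the stated iterated oscillatory integral, with the \(x_j\)-integrals placed inside the \(\operatorname{Os}\) symbol as written.

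Combining the two limits gives the representation almost everywhere in \((x_M,x_0)\). When \(q_{N,j}=0\) no regularization is needed, and the formula reduces directly to the first display above.
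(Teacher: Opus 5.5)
Your proposal is correct and follows the paper's own argument. Like the paper, you insert each one-step oscillatory realization into the exact composition formula of Proposition~\ref{prop:exact-time-sliced-kernel}, add the phases, multiply the amplitudes, and let the compatibility hypothesis justify the regularized integrations. Your product cut-off \(\prod_j\chi(\varepsilon\theta_j)\), and your reading of that hypothesis as Hilbert--Schmidt convergence of the regularized kernels plus regularization-independence, make explicit what the paper leaves unspecified.
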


\begin{proof}
This follows by inserting the oscillatory representation of each one-step
kernel into the exact composition formula of
Proposition~\ref{prop:exact-time-sliced-kernel}. The phase of the composed
integral is the sum of the one-step phases, and its amplitude is the product
of the one-step amplitudes. The stated compatibility assumptions justify the
successive regularized integrations.
\end{proof}

\begin{definition}[Spectral cut-off oscillatory amplitude]
\label{def:spectral-cutoff-oscillatory-amplitude}
Under the hypotheses of
Proposition~\ref{prop:composition-oscillatory-kernels}, the resulting
oscillatory representation of \(U_{N,M}(t,s)\) is called a spectral cut-off
oscillatory amplitude and is denoted by
\[
I_{N,M}(t,s).
\]
This notation refers to the oscillatory representation, while
\(U_{N,M}(t,s)\) denotes the underlying time-sliced operator.
\end{definition}

Thus,
\[
I_{N,M}(t,s)=U_{N,M}(t,s)
\]
as operators whenever the oscillatory realization exists. The notation
records additional information about the representation, not a different
operator.

\begin{remark}[Absence of a canonical action]
\label{rem:no-canonical-action}
In a general abstract Hilbert space, there is no canonical configuration
space, phase function, or classical action associated with the matrix
\(H_{N}(t)\). The finite state-sum remains well defined, but it should not be
identified with a Feynman-type integral. A discrete action arises only after
choosing additional geometric data, for example a Schrödinger
representation, a phase-space quantization, or a coherent-state resolution
of the identity.
\end{remark}

\subsection{The two limiting procedures}
\label{subsec:two-limiting-procedures}

For \(u\in\mathcal{H}\), define
\[
u_{N,M}(t)
=
U_{N,M}(t,0)P_{N}u.
\]
For each fixed \(N\), finite-dimensional non-autonomous evolution theory gives
\[
\lim_{|\Pi_{M}|\to 0}
u_{N,M}(t)
=
U_{N}(t,0)P_{N}u.
\]
If the hypotheses of the spectral convergence theorem are satisfied, then
\[
\lim_{N\to\infty}
U_{N}(t,0)P_{N}u
=
U(t,0)u.
\]
Therefore
\[
U(t,0)u
=
\lim_{N\to\infty}
\lim_{|\Pi_{M}|\to 0}
U_{N,M}(t,0)P_{N}u.
\]

At the abstract level, this formula is a double limit of finite-dimensional
time-sliced operators, equivalently of the state-sums introduced in
Definition~\ref{def:spectral-cutoff-state-sum}. When the one-step kernels
possess oscillatory realizations, the same formula becomes
\[
U(t,0)u
=
\lim_{N\to\infty}
\lim_{|\Pi_{M}|\to 0}
I_{N,M}(t,0)P_{N}u.
\]

\begin{remark}
\label{rem:iterated-versus-joint-limit}
The preceding formula concerns an iterated limit: the time-slicing limit is
taken first at fixed spectral cut-off, and the spectral cut-off is then
removed. A joint limiting procedure requires estimates on the time-slicing
error that are quantitative in \(N\). Such estimates will be established
below under additional time-regularity assumptions on \(H(t)\).
\end{remark}
\section{Convergence of the spectral cut-off dynamics}
\label{sec:convergence-cutoff-dynamics}

In the previous section we introduced the finite-dimensional
oscillatory amplitudes \(I_{N,M}(t,s)\) associated with the spectral
cut-off Hamiltonians
\[
        H_N(t)=P_NH(t)P_N.
\]
For fixed \(N\), the time-slicing limit gives the finite-dimensional
propagator \(U_N(t,s)\).  The purpose of this section is to prove that,
under suitable \(H_0\)-relative assumptions, the cut-off propagators
converge strongly to the propagator of the original non-autonomous
Hamiltonian equation.

The main point is that the convergence is not proved at the level of
formal path integrals.  It is proved at the level of propagators, using
the spectral cut-off and Duhamel's formula.  The oscillatory
interpretation is then inherited from the finite-dimensional
approximations.

\subsection{The Hilbert scale associated with \(H_0\)}

Let \(H_0\) be the positive self-adjoint reference operator introduced
in Section~\ref{sec:cutoff-oscillatory-integrals}.  We assume throughout
this section that \(H_0\) has compact resolvent.  For \(s\geq0\), we set
\[
        \Hilb^s=D((1+H_0)^s),
        \qquad
        \|u\|_s=\|(1+H_0)^s u\|.
\]
Thus \(\Hilb^0=\Hilb\).  We also write
\[
        \Hilb^\infty=\bigcap_{s\geq0}\Hilb^s.
\]

The spectral projections
\[
        P_N=\mathbf 1_{[0,N]}(H_0)
\]
commute with \(H_0\).  Hence, for every \(s\geq0\),
\[
        \|P_Nu\|_s\leq \|u\|_s,
        \qquad u\in\Hilb^s.
\]
Moreover,
\[
        P_Nu\longrightarrow u
        \qquad\text{in }\Hilb^s
\]
for every \(u\in\Hilb^s\).

We shall use the following elementary compactness lemma several times.

\begin{lemma}[Uniform convergence on compact sets]
\label{lem:uniform-tail}
Let \(s\geq0\), and let \(K\subset\Hilb^s\) be compact.  Then
\[
        \sup_{u\in K}\|(I-P_N)u\|_s
        \longrightarrow0
\]
as \(N\to+\infty\).
\end{lemma}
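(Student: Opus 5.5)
The plan is a standard $\varepsilon/3$ argument that combines pointwise convergence with a uniform Lipschitz bound, in the spirit of the Banach--Steinhaus principle. Two facts recorded just before the lemma will be used: $P_N$ commutes with $(1+H_0)^s$, and $P_Nu\to u$ in $\Hilb^s$ for every $u\in\Hilb^s$.

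First I check that the maps $T_N:=I-P_N$ are uniformly bounded on $\Hilb^s$. Since $(1+H_0)^s$ commutes with $P_N$, for $u\in\Hilb^s$ we have
\[
\|(I-P_N)u\|_s=\|(I-P_N)(1+H_0)^su\|\leq \|(1+H_0)^su\|=\|u\|_s ,
\]
because $I-P_N$ is an orthogonal projection. Each $T_N$ is therefore a contraction on $(\Hilb^s,\|\cdot\|_s)$, uniformly in $N$. Pointwise convergence is also needed: for each fixed $u\in\Hilb^s$, $\|(I-P_N)u\|_s=\|(I-P_N)v\|$ with $v=(1+H_0)^su\in\Hilb$, and this tends to $0$ by strong convergence of the spectral projections $P_N\to I$. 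That convergence holds because $\mathbf 1_{[0,N]}(\lambda)\to1$ pointwise on the spectrum, and one can conclude by dominated convergence in the spectral theorem, or directly by expanding $v$ in an eigenbasis.

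Next I use compactness. Fix $\varepsilon>0$. Since $K$ is compact in $\Hilb^s$, it is totally bounded, so there are finitely many points $u_1,\dots,u_m\in K$ such that every $u\in K$ satisfies $\|u-u_i\|_s<\varepsilon/2$ for some $i$. By pointwise convergence, choose $N_0$ such that $\|(I-P_N)u_i\|_s<\varepsilon/2$ for all $i$ and all $N\geq N_0$; this is possible because there are only finitely many $i$. Then for $u\in K$ with $u_i$ chosen as above, the contraction property gives
\[
\|(I-P_N)u\|_s\leq \|(I-P_N)(u-u_i)\|_s+\|(I-P_N)u_i\|_s<\varepsilon ,
\]
uniformly in $u$. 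Hence the supremum over $K$ tends to $0$.

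No step is a genuine obstacle. The only point requiring care is the uniform bound, since this is what transfers convergence from the finite net to all of $K$, and it rests on the commutation of $P_N$ with $(1+H_0)^s$, which holds by the functional calculus. The argument uses neither the compact-resolvent assumption nor monotonicity in $N$.
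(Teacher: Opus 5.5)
Your proof is correct and follows essentially the same route as the paper's: pointwise convergence of $P_N$ in $\Hilb^s$, a uniform bound on $I-P_N$ in $\Hilb^s$, and a finite $\varepsilon$-net of $K$ that carries the convergence to all of $K$. The only differences are minor. You use the sharper bound $\|(I-P_N)u\|_s\leq\|u\|_s$, since $I-P_N$ is itself an orthogonal projection commuting with $(1+H_0)^s$, where the paper uses $2\|u\|_s$. You also verify the pointwise convergence explicitly rather than citing it.
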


\begin{proof}
For each fixed \(u\in\Hilb^s\), we know that
\[
        \|(I-P_N)u\|_s\longrightarrow0.
\]
Moreover, \(P_N\) is a contraction on \(\Hilb^s\).  Hence
\[
        \|(I-P_N)u\|_s\leq 2\|u\|_s.
\]
Let \(\varepsilon>0\).  Since \(K\) is compact, there exist
\(u_1,\ldots,u_m\in K\) such that for every \(u\in K\) one can find
\(u_j\) with
\[
        \|u-u_j\|_s<\varepsilon.
\]
For this \(u_j\), we have
\[
\begin{aligned}
        \|(I-P_N)u\|_s
        &\leq
        \|(I-P_N)(u-u_j)\|_s
        +
        \|(I-P_N)u_j\|_s    \\
        &\leq
        2\|u-u_j\|_s
        +
        \|(I-P_N)u_j\|_s    \\
        &<
        2\varepsilon
        +
        \|(I-P_N)u_j\|_s .
\end{aligned}
\]
Since the set \(\{u_1,\ldots,u_m\}\) is finite, there exists
\(N_0\) such that
\[
        \|(I-P_N)u_j\|_s<\varepsilon
        \qquad
        \text{for all }j=1,\ldots,m
\]
whenever \(N\geq N_0\).  Therefore
\[
        \sup_{u\in K}\|(I-P_N)u\|_s
        \leq 3\varepsilon
\]
for \(N\geq N_0\).  This proves the claim.
\end{proof}

\subsection{Assumptions on the full dynamics}

We now introduce the assumptions under which the spectral cut-off
dynamics converges.

\begin{assumption}[Well-posed Hamiltonian evolution]
\label{ass:full-dynamics}
Let \(I\subset\mathbb R\) be a compact interval.  We assume that
\(H(t)\), \(t\in I\), is a family of symmetric operators on \(\Hilb\)
such that the non-autonomous Schr\"odinger equation
\[
        \ii\partial_t u(t)=H(t)u(t)
\]
generates a unitary propagator
\[
        U(t,s):\Hilb\to\Hilb,
        \qquad s,t\in I.
\]
Thus
\[
        U(t,r)U(r,s)=U(t,s),
        \qquad
        U(s,s)=\Id,
\]
and \(U(t,s)\) is strongly continuous in \((t,s)\).

We also assume that there exists a number \(\mu\geq0\) such that:

\begin{enumerate}[label=\textup{(\roman*)}]
\item
For every \(t\in I\), the operator \(H(t)\) extends to a bounded
operator
\[
        H(t):\Hilb^\mu\longrightarrow \Hilb.
\]

\item
The map
\[
        t\longmapsto H(t)
\]
is continuous from \(I\) to
\[
        \mathcal L(\Hilb^\mu,\Hilb).
\]

\item
The propagator preserves \(\Hilb^\mu\), and for every \(u\in\Hilb^\mu\)
the map
\[
        (t,s)\longmapsto U(t,s)u
\]
is continuous from \(I\times I\) to \(\Hilb^\mu\).

\item
For every \(u\in\Hilb^\mu\), the function
\[
        t\longmapsto U(t,s)u
\]
is a strong solution of
\[
        \ii\partial_t u(t)=H(t)u(t)
\]
with values in \(\Hilb\).
\end{enumerate}
\end{assumption}

\begin{remark}
The assumption above is not meant to be optimal.  It isolates the
minimal mechanism needed for the convergence proof: the full propagator
must be defined, it must preserve enough regularity to make \(H(t)u(t)\)
meaningful, and the high spectral tail of \(U(t,s)u\) must vanish in the
\(\Hilb^\mu\)-norm.
\end{remark}

\subsection{Finite-dimensional cut-off propagators}

For each \(N\), we define
\[
        \Hilb_N=P_N\Hilb,
        \qquad
        H_N(t)=P_NH(t)P_N.
\]
Since \(\Hilb_N\) is finite-dimensional, \(H_N(t)\) is a bounded
operator on \(\Hilb_N\).

\begin{lemma}[Finite-dimensional unitary propagator]
\label{lem:finite-propagator}
For each \(N\), the cut-off equation
\[
        \ii\partial_t u_N(t)=H_N(t)u_N(t),
        \qquad
        u_N(s)=u_{N,s}\in\Hilb_N,
\]
has a unique unitary propagator
\[
        U_N(t,s):\Hilb_N\to\Hilb_N.
\]
Moreover,
\[
        U_N(t,r)U_N(r,s)=U_N(t,s),
        \qquad
        U_N(s,s)=\Id_{\Hilb_N}.
\]
\end{lemma}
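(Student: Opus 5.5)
The plan is to reduce the statement to the standard theory of linear ordinary differential equations on the finite-dimensional space \(\Hilb_N\), with unitarity coming from self-adjointness of \(H_N(t)\).

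\emph{Continuity and self-adjointness of the generator.} First we check that \(t\mapsto H_N(t)\) is a continuous map from \(I\) into \(\mathcal L(\Hilb_N)\). Since \(H_0\) has compact resolvent, \(\Hilb_N\) is finite-dimensional and \(\Hilb_N\subset\Hilb^\infty\subset\Hilb^\mu\). The restriction of the \(\Hilb^\mu\)-norm to \(\Hilb_N\) is equivalent to the \(\Hilb\)-norm, because \(\|v\|_\mu\le(1+N)^\mu\|v\|\) there. By Assumption~\ref{ass:full-dynamics}(ii), \(t\mapsto H(t)\) is continuous into \(\mathcal L(\Hilb^\mu,\Hilb)\). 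Hence
\[
\|H_N(t)-H_N(t')\|_{\mathcal L(\Hilb_N)}\le(1+N)^\mu\|H(t)-H(t')\|_{\mathcal L(\Hilb^\mu,\Hilb)} .
\]
Alternatively, continuity of the matrix entries \(\langle e_{N,a},H(t)e_{N,b}\rangle\) already follows from the standing assumption that \(t\mapsto H(t)u\) is continuous for \(u\in\Hilb^\infty\). Symmetry of \(H(t)\) on \(\Hilb^\infty\) gives \(\langle P_NH(t)P_Nv,w\rangle=\langle v,P_NH(t)P_Nw\rangle\) for \(v,w\in\Hilb_N\), so \(H_N(t)\) is self-adjoint on \(\Hilb_N\).

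\emph{Existence, uniqueness and the cocycle property.} Fix \(s\in I\) and consider the matrix-valued initial value problem
\[
\ii\partial_tU_N(t,s)=H_N(t)U_N(t,s),\qquad U_N(s,s)=\Id_{\Hilb_N}.
\]
This is a linear equation with continuous coefficients on the compact interval \(I\). By Picard--Lindel\"of, or explicitly by the convergent Dyson series with bound \(\exp\big(\int\|H_N\|\big)\), it has a unique \(C^1\) solution. Uniqueness for the vector equation follows from Gr\"onwall's inequality. The cocycle identity \(U_N(t,r)U_N(r,s)=U_N(t,s)\) then follows from uniqueness: both sides solve the same equation in \(t\) and agree at \(t=r\).

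\emph{Unitarity.} Differentiating and using self-adjointness gives
\[
\partial_t\big(U_N(t,s)^*U_N(t,s)\big)=\ii U_N^*H_NU_N-\ii U_N^*H_NU_N=0,
\]
so \(U_N(t,s)^*U_N(t,s)=\Id\). In finite dimension an isometry is automatically unitary.

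No step is a genuine obstacle. The only point needing care is the first step: making sure that \(H_N(t)\) is well defined and continuous in operator norm. This is where the inclusion \(\Hilb_N\subset\Hilb^\infty\) and the equivalence of norms on \(\Hilb_N\) are used.
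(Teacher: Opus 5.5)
Your proposal is correct and follows the paper's proof. Both arguments get norm-continuity of $t\mapsto H_N(t)$ from $\Hilb_N\subset\Hilb^\mu$, solve the linear ODE on $\Hilb_N$, derive self-adjointness of $H_N(t)$ from symmetry of $H(t)$, show unitarity by a vanishing derivative (you differentiate $U_N^*U_N$, the paper differentiates $\langle u_N(t),v_N(t)\rangle$; these are equivalent), and obtain the cocycle law from uniqueness. Your explicit $(1+N)^\mu$ bound and the remark that a finite-dimensional isometry is unitary only spell out what the paper leaves implicit.
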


\begin{proof}
Since \(H(t):\Hilb^\mu\to\Hilb\) is norm-continuous and
\(P_N\Hilb\subset \Hilb^\infty\subset \Hilb^\mu\), the map
\[
        t\longmapsto H_N(t)=P_NH(t)P_N
\]
is a continuous map from \(I\) to
\(\mathcal L(\Hilb_N,\Hilb_N)\).  Hence the finite-dimensional linear
ordinary differential equation
\[
        \partial_t u_N(t)=-\ii H_N(t)u_N(t)
\]
has a unique global solution on \(I\).

It remains to prove unitarity.  Since \(H(t)\) is symmetric, for
\(v,w\in\Hilb_N\) we have
\[
\begin{aligned}
        \langle H_N(t)v,w\rangle
        &=
        \langle P_NH(t)P_Nv,w\rangle        \\
        &=
        \langle H(t)v,w\rangle              \\
        &=
        \langle v,H(t)w\rangle              \\
        &=
        \langle v,P_NH(t)P_Nw\rangle        \\
        &=
        \langle v,H_N(t)w\rangle.
\end{aligned}
\]
Thus \(H_N(t)\) is self-adjoint on the finite-dimensional Hilbert space
\(\Hilb_N\).  If \(u_N(t)\) and \(v_N(t)\) are two solutions, then
\[
\begin{aligned}
        \frac{d}{dt}\langle u_N(t),v_N(t)\rangle
        &=
        \langle -\ii H_N(t)u_N(t),v_N(t)\rangle
        +
        \langle u_N(t),-\ii H_N(t)v_N(t)\rangle' .
\end{aligned}
\]
Using the convention that the scalar product is linear in the first
variable, the second term is
\[
        \langle u_N(t),-\ii H_N(t)v_N(t)\rangle'
        =
        \ii\langle u_N(t),H_N(t)v_N(t)\rangle.
\]
Therefore
\[
        \frac{d}{dt}\langle u_N(t),v_N(t)\rangle
        =
        -\ii\langle H_N(t)u_N(t),v_N(t)\rangle
        +
        \ii\langle u_N(t),H_N(t)v_N(t)\rangle
        =
        0.
\]
The propagator is thus unitary.  The composition law follows from
uniqueness of solutions.
\end{proof}

\begin{remark}
If the convention for the Hilbert scalar product is conjugate-linear in
the first variable, the signs in the computation above are reversed,
but the conclusion is of course the same.
\end{remark}

\subsection{The Duhamel comparison formula}

We now compare the exact propagator \(U(t,s)\) with the cut-off
propagator \(U_N(t,s)\).

Let \(u\in\Hilb^\mu\).  We define
\[
        w_N(t)=P_NU(t,s)u,
        \qquad
        v_N(t)=U_N(t,s)P_Nu.
\]
Both functions take values in \(\Hilb_N\).  The function \(v_N\)
satisfies
\[
        \ii\partial_t v_N(t)=H_N(t)v_N(t),
        \qquad
        v_N(s)=P_Nu.
\]
The projected exact solution \(w_N\) satisfies a perturbed cut-off
equation.

\begin{lemma}[Projected exact dynamics]
\label{lem:projected-exact}
For \(u\in\Hilb^\mu\), one has
\[
        \ii\partial_t w_N(t)
        =
        H_N(t)w_N(t)
        +
        R_N(t,s)u,
\]
where
\[
        R_N(t,s)u
        =
        P_NH(t)(I-P_N)U(t,s)u.
\]
\end{lemma}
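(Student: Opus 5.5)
The plan is to differentiate \(w_N(t)=P_NU(t,s)u\) directly, using Assumption~\ref{ass:full-dynamics}(iv), and then split the resulting term \(P_NH(t)U(t,s)u\) according to the decomposition \(I=P_N+(I-P_N)\).

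\textbf{Step 1 (differentiation).} Fix \(u\in\Hilb^\mu\). By Assumption~\ref{ass:full-dynamics}(iv), the map \(t\mapsto U(t,s)u\) is strongly differentiable in \(\Hilb\), with
\[
        \ii\partial_tU(t,s)u=H(t)U(t,s)u .
\]
The projection \(P_N\) is bounded on \(\Hilb\) and independent of \(t\), so it commutes with the derivative. This gives
\[
        \ii\partial_t w_N(t)=P_NH(t)U(t,s)u ,
\]
where \(U(t,s)u\in\Hilb^\mu\) by Assumption~\ref{ass:full-dynamics}(iii). By part (i), \(H(t)\) is bounded on \(\Hilb^\mu\), so the right-hand side is meaningful.

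\textbf{Step 2 (splitting).} Write \(U(t,s)u=P_NU(t,s)u+(I-P_N)U(t,s)u\). Both pieces lie in \(\Hilb^\mu\):
\begin{itemize}
\item the first because \(P_N\Hilb\subset\Hilb^\infty\), since \(P_N\) is a spectral projection onto finitely many eigenvalues of \(H_0\);
\item the second because \(U(t,s)u\in\Hilb^\mu\) and \(P_N\) is a contraction on \(\Hilb^\mu\).
\end{itemize}
By linearity of the extension \(H(t)\in\mathcal L(\Hilb^\mu,\Hilb)\),
\[
        P_NH(t)U(t,s)u=P_NH(t)P_N\,P_NU(t,s)u+P_NH(t)(I-P_N)U(t,s)u .
\]
Here we used \(P_N^2=P_N\). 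The first term is \(H_N(t)w_N(t)\) and the second is \(R_N(t,s)u\), which is exactly the claimed identity.

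\textbf{Main point requiring care.} The only subtlety is ensuring that \(H(t)\) acting on \(P_NU(t,s)u\) agrees with the operator used to define \(H_N(t)\), namely the original symmetric \(H(t)\) on \(\Hilb^\infty\). This holds because the bounded extension in Assumption~\ref{ass:full-dynamics}(i) restricts to \(H(t)\) on \(\Hilb^\infty\subset D(H(t))\). I would also note that \(R_N(t,s)u\) is continuous in \(t\), which follows from (ii) and (iii), so the identity holds as an equation between continuous \(\Hilb_N\)-valued functions and is ready for the subsequent Duhamel integration.
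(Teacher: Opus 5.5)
Your proposal is correct and follows the paper's proof: you apply \(P_N\) to the strong equation for \(U(t,s)u\), then split \(U(t,s)u=P_NU(t,s)u+(I-P_N)U(t,s)u\) and use \(P_N^2=P_N\) to recognize \(H_N(t)w_N(t)\). The extra bookkeeping you add is left implicit in the paper but is accurate: both pieces lie in \(\Hilb^\mu\), the bounded extension of \(H(t)\) agrees with \(H(t)\) on \(\Hilb^\infty\), and \(R_N(t,s)u\) is continuous in \(t\).
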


\begin{proof}
Since \(U(t,s)u\) is a strong solution,
\[
        \ii\partial_t U(t,s)u=H(t)U(t,s)u.
\]
Applying \(P_N\), we get
\[
        \ii\partial_t P_NU(t,s)u
        =
        P_NH(t)U(t,s)u.
\]
Now decompose
\[
        U(t,s)u=P_NU(t,s)u+(I-P_N)U(t,s)u.
\]
Then
\[
\begin{aligned}
        P_NH(t)U(t,s)u
        &=
        P_NH(t)P_NU(t,s)u
        +
        P_NH(t)(I-P_N)U(t,s)u     \\
        &=
        H_N(t)w_N(t)
        +
        R_N(t,s)u.
\end{aligned}
\]
This proves the formula.
\end{proof}

The difference \(w_N-v_N\) is therefore controlled by the remainder
\(R_N(t,s)u\).

\begin{proposition}[Duhamel formula for the cut-off error]
\label{prop:duhamel-cutoff}
For every \(u\in\Hilb^\mu\) and every \(s,t\in I\),
\[
        P_NU(t,s)u-U_N(t,s)P_Nu
        =
        -\ii
        \int_s^t
        U_N(t,\tau)
        P_NH(\tau)(I-P_N)U(\tau,s)u
        \,d\tau .
\]
Consequently,
\[
        \|P_NU(t,s)u-U_N(t,s)P_Nu\|
        \leq
        \int_s^t
        \|H(\tau)(I-P_N)U(\tau,s)u\|
        \,d\tau
\]
when \(t\geq s\), and the analogous estimate holds with the integral
over \([t,s]\) when \(t<s\).
\end{proposition}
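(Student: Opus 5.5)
The argument is the variation-of-constants formula for the inhomogeneous finite-dimensional equation of Lemma~\ref{lem:projected-exact}, written with the cut-off propagator of Lemma~\ref{lem:finite-propagator}. Fix \(u\in\Hilb^\mu\) and \(s,t\in I\). I would consider the \(\Hilb_N\)-valued function
\[
        \phi(\tau)=U_N(t,\tau)w_N(\tau),\qquad w_N(\tau)=P_NU(\tau,s)u,
\]
and differentiate it in \(\tau\).

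First I would check differentiability. Since \(\Hilb_N\) is finite-dimensional and \(\tau\mapsto H_N(\tau)\) is norm-continuous, \(\tau\mapsto U_N(t,\tau)\) is \(C^1\) in operator norm. The composition law and \(U_N(t,\tau)U_N(\tau,t)=\Id\) give
\[
        \partial_\tau U_N(t,\tau)=\ii\, U_N(t,\tau)H_N(\tau).
\]
By Assumption~\ref{ass:full-dynamics}(iv), \(U(\tau,s)u\) is strongly differentiable in \(\Hilb\), and \(P_N\) is bounded. Hence \(w_N\) is differentiable, with derivative given by Lemma~\ref{lem:projected-exact}:
\[
        \partial_\tau w_N=-\ii H_N(\tau)w_N-\ii R_N(\tau,s)u .
\]
The product rule for a \(C^1\) operator family acting on a differentiable vector function then gives
\[
        \partial_\tau\phi(\tau)=\ii U_N(t,\tau)H_N w_N-\ii U_N(t,\tau)H_N w_N-\ii U_N(t,\tau)R_N(\tau,s)u=-\ii U_N(t,\tau)R_N(\tau,s)u .
\]

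Next I would check that this derivative is continuous, so the fundamental theorem of calculus applies. The map \(\tau\mapsto (I-P_N)U(\tau,s)u\) is continuous into \(\Hilb^\mu\) by Assumption~\ref{ass:full-dynamics}(iii), since \(P_N\) is bounded on \(\Hilb^\mu\). The map \(\tau\mapsto H(\tau)\in\mathcal L(\Hilb^\mu,\Hilb)\) is continuous by (ii). Together these make \(\tau\mapsto R_N(\tau,s)u\) continuous into \(\Hilb\), and hence \(\partial_\tau\phi\) is continuous.

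Integrating from \(s\) to \(t\), and using \(\phi(t)=P_NU(t,s)u\) and \(\phi(s)=U_N(t,s)P_Nu\), yields the stated Duhamel identity. The estimate follows from unitarity of \(U_N(t,\tau)\) on \(\Hilb_N\) and \(\|P_N\|\leq1\). Together these give \(\|U_N(t,\tau)P_NH(\tau)(I-P_N)U(\tau,s)u\|\leq\|H(\tau)(I-P_N)U(\tau,s)u\|\). For \(t<s\), reversing the orientation of the integral gives the analogous bound over \([t,s]\).

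The only delicate point is the justification of the product rule and of the differentiability of \(w_N\). Everything else is finite-dimensional linear algebra. The key inputs here are that (iv) provides strong differentiability in \(\Hilb\), and that (ii)–(iii) make the remainder continuous.
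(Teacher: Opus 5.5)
Your proposal is correct and is essentially the paper's argument. The paper forms $e_N=w_N-v_N$, observes that $\ii\partial_t e_N=H_Ne_N+R_N(t,s)u$ with $e_N(s)=0$, and invokes the finite-dimensional variation-of-constants formula; you prove that formula directly by differentiating $\tau\mapsto U_N(t,\tau)w_N(\tau)$, and your check via Assumption~\ref{ass:full-dynamics}(ii)--(iii) that $\tau\mapsto R_N(\tau,s)u$ is continuous supplies a detail the paper leaves implicit.
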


\begin{proof}
Set
\[
        e_N(t)=w_N(t)-v_N(t).
\]
By Lemma~\ref{lem:projected-exact} and by the equation satisfied by
\(v_N\),
\[
        \ii\partial_t e_N(t)
        =
        H_N(t)e_N(t)+R_N(t,s)u.
\]
Moreover,
\[
        e_N(s)=P_Nu-P_Nu=0.
\]
The variation-of-constants formula in the finite-dimensional space
\(\Hilb_N\) gives
\[
        e_N(t)
        =
        -\ii
        \int_s^t
        U_N(t,\tau)R_N(\tau,s)u
        \,d\tau .
\]
Substituting the definition of \(R_N\), we obtain
\[
        e_N(t)
        =
        -\ii
        \int_s^t
        U_N(t,\tau)
        P_NH(\tau)(I-P_N)U(\tau,s)u
        \,d\tau.
\]
This proves the identity.

Since \(U_N(t,\tau)\) is unitary on \(\Hilb_N\) and \(P_N\) is an
orthogonal projection on \(\Hilb\),
\[
\begin{aligned}
        \|e_N(t)\|
        &\leq
        \int_s^t
        \|U_N(t,\tau)
        P_NH(\tau)(I-P_N)U(\tau,s)u\|
        \,d\tau                                    \\
        &\leq
        \int_s^t
        \|H(\tau)(I-P_N)U(\tau,s)u\|
        \,d\tau .
\end{aligned}
\]
This gives the estimate for \(t\geq s\).  The case \(t<s\) is identical,
with the orientation of the integral reversed.
\end{proof}

\subsection{Strong convergence of the cut-off propagators}

We now prove the main theorem of this section.

\begin{theorem}[Strong convergence of the spectral cut-off dynamics]
\label{thm:strong-cutoff-convergence}
Assume Assumption~\ref{ass:full-dynamics}.  Then, for every
\(u\in\Hilb^\mu\),
\[
        U_N(t,s)P_Nu
        \longrightarrow
        U(t,s)u
\]
strongly in \(\Hilb\), uniformly for \((t,s)\in I\times I\).
\end{theorem}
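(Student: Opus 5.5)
The plan is to split the error into a projection tail and a Duhamel term:
\[
        U_N(t,s)P_Nu-U(t,s)u
        =
        \bigl(U_N(t,s)P_Nu-P_NU(t,s)u\bigr)
        -(I-P_N)U(t,s)u .
\]
Each piece will be estimated uniformly in \((t,s)\in I\times I\).

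Both pieces rest on one compactness observation. By Assumption~\ref{ass:full-dynamics}(iii), the map \((t,s)\mapsto U(t,s)u\) is continuous from the compact set \(I\times I\) into \(\Hilb^\mu\). Hence its image
\[
        K_u=\{U(\tau,s)u:\ \tau,s\in I\}
\]
is compact in \(\Hilb^\mu\), and therefore also in \(\Hilb=\Hilb^0\), since \(\|\cdot\|\leq\|\cdot\|_\mu\). Lemma~\ref{lem:uniform-tail} applied to \(K_u\) with \(s=0\) gives
\[
        \sup_{t,s\in I}\|(I-P_N)U(t,s)u\|\to0 .
\]
This disposes of the projection tail.

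For the Duhamel term, Proposition~\ref{prop:duhamel-cutoff} gives
\[
        \|P_NU(t,s)u-U_N(t,s)P_Nu\|
        \leq
        \int_I\|H(\tau)(I-P_N)U(\tau,s)u\|\,d\tau .
\]
By Assumption~\ref{ass:full-dynamics}(i)--(ii) and compactness of \(I\), the constant \(C_H=\sup_{\tau\in I}\|H(\tau)\|_{\mathcal L(\Hilb^\mu,\Hilb)}\) is finite, since a continuous function on a compact set is bounded. The integrand is therefore bounded by
\[
        C_H\,\|(I-P_N)U(\tau,s)u\|_\mu
        \leq
        C_H\sup_{v\in K_u}\|(I-P_N)v\|_\mu .
\]
Applying Lemma~\ref{lem:uniform-tail} again, now in \(\Hilb^\mu\), this tends to zero. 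The whole bound is then at most \(|I|\,C_H\sup_{v\in K_u}\|(I-P_N)v\|_\mu\to0\), uniformly in \(s,t\). The case \(t<s\) is handled by the reversed-orientation version of the same estimate. Adding the two pieces proves the theorem.

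The main obstacle is the uniformity, which is why the compactness of \(K_u\) in \(\Hilb^\mu\) is the key step. Pointwise convergence of \((I-P_N)U(\tau,s)u\) alone would require a dominated convergence argument for each fixed \((t,s)\), and would not by itself yield a uniform bound. Joint continuity in Assumption~\ref{ass:full-dynamics}(iii), combined with Lemma~\ref{lem:uniform-tail}, handles this directly. A minor check is that the Duhamel identity requires \(u\in\Hilb^\mu\), which is exactly the hypothesis of the theorem.
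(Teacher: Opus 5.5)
Your proposal is correct and follows essentially the same route as the paper: the same split into projection tail and Duhamel term, with compactness of the orbit $\{U(t,s)u\}$ in $\Hilb^\mu$ and Lemma~\ref{lem:uniform-tail} giving uniformity in both pieces. The only cosmetic difference is that you apply the lemma to the tail directly in $\Hilb^0$. The paper instead applies it in $\Hilb^\mu$ and then uses $\|\cdot\|\leq\|\cdot\|_\mu$.
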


\begin{proof}
We write
\[
\begin{aligned}
        U_N(t,s)P_Nu-U(t,s)u
        &=
        \bigl(U_N(t,s)P_Nu-P_NU(t,s)u\bigr)
        +
        \bigl(P_NU(t,s)u-U(t,s)u\bigr).
\end{aligned}
\]
We estimate the two terms separately.

First consider the projection tail
\[
        (P_N-I)U(t,s)u.
\]
By Assumption~\ref{ass:full-dynamics}, the map
\[
        (t,s)\longmapsto U(t,s)u
\]
is continuous from the compact set \(I\times I\) to \(\Hilb^\mu\).
Hence
\[
        K_u=\{U(t,s)u;(t,s)\in I\times I\}
\]
is compact in \(\Hilb^\mu\).  By Lemma~\ref{lem:uniform-tail},
\[
        \sup_{(t,s)\in I\times I}
        \|(I-P_N)U(t,s)u\|_\mu
        \longrightarrow0.
\]
In particular,
\[
        \sup_{(t,s)\in I\times I}
        \|(I-P_N)U(t,s)u\|
        \longrightarrow0.
\]

We now estimate the first term.  By
Proposition~\ref{prop:duhamel-cutoff},
\[
\begin{aligned}
        \|P_NU(t,s)u-U_N(t,s)P_Nu\|
        &\leq
        \int_{\min(s,t)}^{\max(s,t)}
        \|H(\tau)(I-P_N)U(\tau,s)u\|
        \,d\tau .
\end{aligned}
\]
Since \(t\mapsto H(t)\) is continuous from \(I\) to
\(\mathcal L(\Hilb^\mu,\Hilb)\), there exists a constant \(C_I>0\) such
that
\[
        \|H(\tau)z\|
        \leq C_I\|z\|_\mu
\]
for all \(\tau\in I\) and all \(z\in\Hilb^\mu\).  Therefore
\[
\begin{aligned}
        \|P_NU(t,s)u-U_N(t,s)P_Nu\|
        &\leq
        C_I |t-s|
        \sup_{\tau\in I}
        \|(I-P_N)U(\tau,s)u\|_\mu .
\end{aligned}
\]
Again, the set
\[
        \{U(\tau,s)u;(\tau,s)\in I\times I\}
\]
is compact in \(\Hilb^\mu\).  Hence, by
Lemma~\ref{lem:uniform-tail},
\[
        \sup_{(\tau,s)\in I\times I}
        \|(I-P_N)U(\tau,s)u\|_\mu
        \longrightarrow0.
\]
It follows that
\[
        \sup_{(t,s)\in I\times I}
        \|P_NU(t,s)u-U_N(t,s)P_Nu\|
        \longrightarrow0.
\]
Combining this with the projection-tail estimate proves
\[
        \sup_{(t,s)\in I\times I}
        \|U_N(t,s)P_Nu-U(t,s)u\|
        \longrightarrow0.
\]
This is the desired uniform strong convergence.
\end{proof}

\begin{remark}
The proof shows precisely where the spectral cut-off enters.  The only
error term is
\[
        P_NH(t)(I-P_N)U(t,s)u.
\]
Thus the convergence follows from the fact that the high-energy tail of
the exact solution is small in the regularity norm required to apply
\(H(t)\).
\end{remark}

\subsection{Consequences for state-sum and oscillatory amplitudes}

Recall that for fixed \(N\), the time-sliced operator \(U_{N,M}(t,s)\)
converges to \(U_N(t,s)\) as the mesh of the partition tends to zero.
Combining this finite-dimensional convergence with
Theorem~\ref{thm:strong-cutoff-convergence}, we obtain the following
consequence.

\begin{corollary}[State-sum construction of the solution]
\label{cor:state-sum-construction}
Let Assumption~\ref{ass:full-dynamics} hold.  Let \(u\in\Hilb^\mu\).
For each \(N\) and each time partition, consider the time-sliced operator
\[
        U_{N,M}(t,s)
        =
        \prod_{j=M-1}^{0}
        \exp\bigl(-\ii\Delta t_jH_N(t_j)\bigr).
\]
Then
\[
        U(t,s)u
        =
        \lim_{N\to\infty}
        \lim_{M\to\infty}
        U_{N,M}(t,s)P_Nu
\]
strongly in \(\Hilb\), uniformly for \((t,s)\in I\times I\).
Equivalently, the finite-dimensional state-sum representation of
\(U_{N,M}(t,s)P_Nu\) converges to \(U(t,s)u\) in this iterated sense.
\end{corollary}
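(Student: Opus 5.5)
The plan is to combine the fixed-$N$ time-slicing convergence with Theorem~\ref{thm:strong-cutoff-convergence}, treating the iterated limit as two separate steps. Here $M\to\infty$ means a sequence of partitions of $[s,t]$ whose mesh $|\Pi_M|$ tends to zero.

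\textbf{Step 1: fixed $N$.} I will prove
\[
\lim_{M\to\infty}\sup_{(t,s)\in I\times I}\|U_{N,M}(t,s)P_Nu-U_N(t,s)P_Nu\|=0 .
\]
This is a matrix Euler-type product estimate on the finite-dimensional space $\Hilb_N$.

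By Lemma~\ref{lem:finite-propagator}, $\tau\mapsto H_N(\tau)$ is continuous from the compact interval $I$ into $\mathcal L(\Hilb_N)$. It is therefore uniformly continuous and bounded, say by $C_N$, with modulus of continuity $\omega_N$.

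On each slice, compare $E_{N,j}=\exp(-\ii\Delta t_jH_N(t_j))$ with $U_N(t_{j+1},t_j)$. Both are unitary. The Duhamel formula for the constant-generator equation perturbed by $H_N(\tau)-H_N(t_j)$ gives
\[
\|E_{N,j}-U_N(t_{j+1},t_j)\|\le \Delta t_j\,\omega_N(|\Pi_M|).
\]
Then telescope:
\[
U_{N,M}-U_N=\sum_j E_{N,M-1}\cdots E_{N,j+1}\bigl(E_{N,j}-U_N(t_{j+1},t_j)\bigr)U_N(t_j,s).
\]
Since every factor other than the difference is unitary, this yields
\[
\|U_{N,M}(t,s)-U_N(t,s)\|\le |t-s|\,\omega_N(|\Pi_M|),
\]
uniformly in $(t,s)$. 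The case $t<s$ is handled the same way, with reversed orientation.

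\textbf{Step 2: remove $N$.} By Theorem~\ref{thm:strong-cutoff-convergence}, $U_N(t,s)P_Nu\to U(t,s)u$ uniformly on $I\times I$. Define
\[
V_N(t,s):=\lim_{M}U_{N,M}(t,s)P_Nu,
\]
which by Step 1 equals $U_N(t,s)P_Nu$ for every $(t,s)$. The iterated limit therefore exists and equals $U(t,s)u$ uniformly in $(t,s)$.

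For the state-sum formulation, note that the identity $\mathcal A_{N,M}(a,b;t,s)=\langle e_{N,a},U_{N,M}(t,s)e_{N,b}\rangle$ is exact. The state-sum expression for $U_{N,M}(t,s)P_Nu$ is thus the same vector, and the equivalence statement is immediate.

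\textbf{Main obstacle.} The only step that needs care is the uniformity in $(t,s)$ in Step 1. Each partition depends on $[s,t]$, so the estimate must be expressed through the mesh alone. The bound $|t-s|\,\omega_N(|\Pi_M|)$ does exactly this: it uses only uniform continuity of $H_N$ on $I$ and unitarity, and it needs no differentiability of $H_N$. The rate is not uniform in $N$, which is why the statement is only an iterated limit (compare Remark~\ref{rem:iterated-versus-joint-limit}).
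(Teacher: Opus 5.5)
Your proposal is correct and follows the paper's proof, which likewise combines the fixed-$N$ time-slicing convergence $U_{N,M}(t,s)P_Nu\to U_N(t,s)P_Nu$ with Theorem~\ref{thm:strong-cutoff-convergence}. Your Step~1 adds something the paper only cites as standard finite-dimensional theory: an explicit proof of that inner limit, uniform in $(t,s)$, via Duhamel's formula, telescoping and the modulus of continuity of $H_N$. This is the same argument the paper later gives, under H\"older continuity, in Proposition~\ref{prop:uniform-time-slicing-error}.
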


\begin{proof}
For fixed \(N\), the finite-dimensional time-slicing construction gives
\[
        \lim_{M\to\infty}
        U_{N,M}(t,s)P_Nu
        =
        U_N(t,s)P_Nu
\]
in \(\Hilb_N\).  By Theorem~\ref{thm:strong-cutoff-convergence},
\[
        \lim_{N\to\infty}
        U_N(t,s)P_Nu
        =
        U(t,s)u
\]
strongly in \(\Hilb\), uniformly for \((t,s)\in I\times I\).  Combining
the two limits gives the result.
\end{proof}

\begin{corollary}[Conditional oscillatory construction]
\label{cor:oscillatory-construction}
Under the assumptions of
Corollary~\ref{cor:state-sum-construction}, suppose in addition that the
one-step kernels admit oscillatory realizations in the sense of
Definition~\ref{def:oscillatory-one-step-kernel}. Then
\[
 U(t,s)u
 =
 \lim_{N\to\infty}\lim_{M\to\infty}
 I_{N,M}(t,s)P_Nu
\]
strongly in \(\Hilb\), uniformly for \((t,s)\in I\times I\).
\end{corollary}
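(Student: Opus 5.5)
The proof reduces to Corollary~\ref{cor:state-sum-construction} by identifying the oscillatory amplitude with the time-sliced operator. By Definition~\ref{def:spectral-cutoff-oscillatory-amplitude} and Proposition~\ref{prop:composition-oscillatory-kernels}, when every one-step kernel $K_{N,j}$ admits an oscillatory realization, the composed oscillatory integral $I_{N,M}(t,s)$ is a representation of the kernel of $U_{N,M}(t,s)$. So the first step is to record the operator identity $I_{N,M}(t,s)=U_{N,M}(t,s)$ on $\Hilb_N$ for every $N$, $M$ and partition.

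To justify this identity, I would apply Proposition~\ref{prop:exact-time-sliced-kernel} to $P_Nu\in\Hilb_N$. The kernel of $U_{N,M}(t,s)$ is the iterated composition of the $K_{N,j}$. Substituting the oscillatory realization of each $K_{N,j}$ then yields exactly the kernel $K_{N,M}$ of Proposition~\ref{prop:composition-oscillatory-kernels}, using the compatibility assumptions assumed there. Hence, for almost every $x_M$,
\[
  (I_{N,M}(t,s)P_Nu)(x_M)=(U_{N,M}(t,s)P_Nu)(x_M),
\]
and the two vectors agree as elements of $\Hilb$.

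Once this is in place, the two families coincide term by term. For each fixed $N$ we have $\lim_{M}I_{N,M}(t,s)P_Nu=U_N(t,s)P_Nu$, by the finite-dimensional time-slicing convergence. Corollary~\ref{cor:state-sum-construction}, which rests on Theorem~\ref{thm:strong-cutoff-convergence}, then gives the outer limit $U(t,s)u$, uniformly in $(t,s)\in I\times I$.

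The only point needing care is the first step. One must check that the regularized integral $\operatorname{Os}\int$ gives the same value as the genuine kernel composition, rather than some other regularization. I would handle this by taking the compatibility hypothesis of Proposition~\ref{prop:composition-oscillatory-kernels} to mean precisely that successive regularized integrations reproduce the composition of the one-step kernels. Finite rank makes the composition unambiguous, so no further analysis is needed. The uniformity in $(t,s)$ is inherited unchanged, since the inner limit is an identity of operators rather than a new estimate.
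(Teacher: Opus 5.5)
Your proposal is correct and follows the paper's proof. The operator identity \(I_{N,M}(t,s)=U_{N,M}(t,s)\) reduces everything to Corollary~\ref{cor:state-sum-construction}, including the uniformity in \((t,s)\); the paper simply reads this identity off Definition~\ref{def:spectral-cutoff-oscillatory-amplitude}, so your kernel-level check via Proposition~\ref{prop:exact-time-sliced-kernel} and the compatibility hypotheses is a harmless elaboration rather than a required step.
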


\begin{proof}
By definition of an oscillatory realization,
\[
 I_{N,M}(t,s)=U_{N,M}(t,s)
\]
as operators. The conclusion therefore follows from
Corollary~\ref{cor:state-sum-construction}.
\end{proof}

\begin{remark}
The abstract construction is intrinsically a limit of finite-dimensional
state-sums. It becomes a limit of oscillatory integrals only in realizations
for which the required phases and amplitudes have been constructed.
\end{remark}

\subsection{Quantitative spectral convergence}
\label{subsec:quantitative-spectral-convergence}

The preceding convergence theorem is qualitative. We now show that additional
regularity of the exact evolution gives an explicit convergence rate with
respect to the spectral cut-off. Set
\[
\Lambda=1+H_{0}.
\]
Since
\[
P_{N}=\mathbf{1}_{[0,N]}(H_{0})
      =\mathbf{1}_{[1,1+N]}(\Lambda),
\]
the spectral theorem gives the following elementary tail estimate.

\begin{lemma}[Quantitative spectral tail]
\label{lem:quantitative-spectral-tail}
Let \(r\geq 0\) and \(\sigma\geq 0\). Then, for every
\(u\in\Hilb^{r+\sigma}\),
\[
\|(I-P_{N})u\|_{r}
\leq
(1+N)^{-\sigma}\|u\|_{r+\sigma}.
\]
\end{lemma}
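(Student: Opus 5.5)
The plan is to prove the estimate directly from the spectral theorem for \(\Lambda=1+H_0\). Since \(I-P_N=\mathbf 1_{(N,\infty)}(H_0)=\mathbf 1_{(1+N,\infty)}(\Lambda)\), and both \(\Lambda^r\) and \(\Lambda^\sigma\) are Borel functions of the same operator, everything reduces to a pointwise inequality between functions of \(\lambda\in[1,\infty)\).

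The first step is to take \(u\in\Hilb^{r+\sigma}\) and write
\[
\|(I-P_N)u\|_r^2=\|\Lambda^r\mathbf 1_{(1+N,\infty)}(\Lambda)u\|^2
=\int_{(1+N,\infty)}\lambda^{2r}\,d\langle E_\lambda u,u\rangle ,
\]
where \(E_\lambda\) is the spectral resolution of \(\Lambda\). Because \(H_0\) has compact resolvent, one could equally expand \(u=\sum_j c_j\phi_j\) in an orthonormal eigenbasis and use the sum \(\sum_{\lambda_j>N}(1+\lambda_j)^{2r}|c_j|^2\). This representation is legitimate since \(u\in\Hilb^{r+\sigma}\subset\Hilb^r\), and \(P_N\) commutes with \(\Lambda^r\).

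The second step uses \(\lambda>1+N\) on the region of integration. There, since \(\sigma\ge0\),
\[
\lambda^{2r}=\lambda^{2(r+\sigma)}\lambda^{-2\sigma}\le(1+N)^{-2\sigma}\lambda^{2(r+\sigma)} .
\]
Integrating gives
\[
\|(I-P_N)u\|_r^2\le(1+N)^{-2\sigma}\int_{(1+N,\infty)}\lambda^{2(r+\sigma)}\,d\langle E_\lambda u,u\rangle\le(1+N)^{-2\sigma}\|u\|_{r+\sigma}^2 .
\]
Taking square roots yields the claim.

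There is no real obstacle here. The only point needing care is the domain bookkeeping: \(\|u\|_{r+\sigma}^2\) should be identified with the full spectral integral of \(\lambda^{2(r+\sigma)}\), which is finite exactly when \(u\in D(\Lambda^{r+\sigma})\). The identity \(P_N=\mathbf 1_{[1,1+N]}(\Lambda)\) noted just before the statement handles the shift of the spectral parameter.
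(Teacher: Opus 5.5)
Your proof is correct and is essentially the paper's argument. The paper also uses the spectral theorem: it writes \(\|(I-P_N)u\|_r^2\) as the integral of \((1+\lambda)^{2r}\) over \((N,\infty)\) against the spectral measure of \(H_0\), and applies the pointwise bound \((1+\lambda)^{2r}\le(1+N)^{-2\sigma}(1+\lambda)^{2(r+\sigma)}\) there. The only difference is that you integrate against the spectral measure of \(\Lambda\) instead, which is just the change of variable \(\lambda\mapsto 1+\lambda\).
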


\begin{proof}
By the spectral theorem,
\[
\|(I-P_{N})u\|_{r}^{2}
=
\int_{(N,\infty)}
(1+\lambda)^{2r}\,d\langle E_{H_{0}}(\lambda)u,u\rangle.
\]
For \(\lambda>N\), one has
\[
(1+\lambda)^{2r}
\leq
(1+N)^{-2\sigma}(1+\lambda)^{2(r+\sigma)}.
\]
Consequently,
\[
\|(I-P_{N})u\|_{r}^{2}
\leq
(1+N)^{-2\sigma}\|u\|_{r+\sigma}^{2},
\]
which proves the claim.
\end{proof}

We can now quantify the convergence established in
Theorem~\ref{thm:strong-cutoff-convergence}.

\begin{theorem}[Quantitative convergence of the cut-off dynamics]
\label{thm:quantitative-cutoff-convergence}
Let \(I\subset\mathbb{R}\) be compact, let \(\mu\geq 0\), and assume that
\[
H(t)\in\mathcal{L}(\Hilb^{\mu},\Hilb)
\]
depends continuously on \(t\in I\). Suppose that the non-autonomous equation
generates a unitary propagator \(U(t,s)\) and that, for some \(\sigma>0\),
\[
U(t,s)\Hilb^{\mu+\sigma}
\subset
\Hilb^{\mu+\sigma}.
\]
Assume moreover that
\[
C_{U,\mu+\sigma}
:=
\sup_{s,t\in I}
\|U(t,s)\|_{\mathcal{L}(\Hilb^{\mu+\sigma})}
<\infty.
\]
Set
\[
C_{H,\mu}
:=
\sup_{t\in I}
\|H(t)\|_{\mathcal{L}(\Hilb^{\mu},\Hilb)}.
\]
Then, for every \(u\in\Hilb^{\mu+\sigma}\),
\[
\begin{split}
\sup_{s,t\in I}
\|U_{N}(t,s)P_{N}u-U(t,s)u\|
\leq{}&
C_{U,\mu+\sigma}
\Bigl(
(1+N)^{-(\mu+\sigma)}
\\
&\quad
+
|I|C_{H,\mu}(1+N)^{-\sigma}
\Bigr)
\|u\|_{\mu+\sigma}.
\end{split}
\]
In particular, there exists a constant \(C_{I,\mu,\sigma}>0\),
independent of \(N\), such that
\[
\sup_{s,t\in I}
\|U_{N}(t,s)P_{N}u-U(t,s)u\|
\leq
C_{I,\mu,\sigma}(1+N)^{-\sigma}
\|u\|_{\mu+\sigma}.
\]
\end{theorem}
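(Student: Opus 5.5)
We split the error exactly as in the proof of Theorem~\ref{thm:strong-cutoff-convergence}:
\[
U_N(t,s)P_Nu-U(t,s)u
=
\bigl(U_N(t,s)P_Nu-P_NU(t,s)u\bigr)
+
(P_N-I)U(t,s)u .
\]
The qualitative compactness argument (Lemma~\ref{lem:uniform-tail}) is replaced by the quantitative tail estimate of Lemma~\ref{lem:quantitative-spectral-tail}, applied to the vector \(U(t,s)u\). By hypothesis this vector lies in \(\Hilb^{\mu+\sigma}\) with \(\|U(t,s)u\|_{\mu+\sigma}\leq C_{U,\mu+\sigma}\|u\|_{\mu+\sigma}\), uniformly in \(s,t\in I\).

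For the projection tail, take \(r=0\) and the exponent \(\mu+\sigma\) in Lemma~\ref{lem:quantitative-spectral-tail}. This gives
\[
\|(I-P_N)U(t,s)u\|\leq (1+N)^{-(\mu+\sigma)}\|U(t,s)u\|_{\mu+\sigma}\leq C_{U,\mu+\sigma}(1+N)^{-(\mu+\sigma)}\|u\|_{\mu+\sigma}.
\]
This is the first term of the claimed bound.

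For the Duhamel term we use Proposition~\ref{prop:duhamel-cutoff}. It bounds the norm of the difference by \(\int\|H(\tau)(I-P_N)U(\tau,s)u\|\,d\tau\), taken over the interval between \(s\) and \(t\). We then use \(\|H(\tau)z\|\leq C_{H,\mu}\|z\|_\mu\), followed by Lemma~\ref{lem:quantitative-spectral-tail} with \(r=\mu\), to get
\[
\|H(\tau)(I-P_N)U(\tau,s)u\|\leq C_{H,\mu}(1+N)^{-\sigma}\|U(\tau,s)u\|_{\mu+\sigma}\leq C_{H,\mu}C_{U,\mu+\sigma}(1+N)^{-\sigma}\|u\|_{\mu+\sigma}.
\]
Integrating over an interval of length at most \(|I|\) gives the second term. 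Summing the two bounds and taking the supremum over \(s,t\) yields the displayed inequality. The final form follows from \((1+N)^{-(\mu+\sigma)}\leq(1+N)^{-\sigma}\), since \(\mu\geq0\), with \(C_{I,\mu,\sigma}=C_{U,\mu+\sigma}(1+|I|C_{H,\mu})\). Here \(C_{H,\mu}<\infty\) because a continuous map on a compact interval is bounded.

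The only real obstacle is justifying Proposition~\ref{prop:duhamel-cutoff} under the present hypotheses. Those hypotheses do not explicitly restate Assumption~\ref{ass:full-dynamics}(iii)--(iv), namely that \(U(t,s)u\) is a strong solution and continuous in \(\Hilb^\mu\). We read "generates a unitary propagator" as including these properties for \(u\in\Hilb^{\mu+\sigma}\subset\Hilb^\mu\). Without them, one needs the measurability and integrability of \(\tau\mapsto H(\tau)(I-P_N)U(\tau,s)u\) and the differentiability of \(P_NU(t,s)u\). Their integrability is anyway secured by the uniform bound just derived, so the Duhamel identity applies verbatim.
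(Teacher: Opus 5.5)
Your proposal is correct and follows the paper's proof essentially line by line. It uses the same splitting into the Duhamel term and the projection tail, Lemma~\ref{lem:quantitative-spectral-tail} with $r=0$ (exponent $\mu+\sigma$) for the tail and with $r=\mu$ inside the Duhamel integral of Proposition~\ref{prop:duhamel-cutoff}, and the uniform orbit bound $C_{U,\mu+\sigma}$. Your caveat about Proposition~\ref{prop:duhamel-cutoff} is fair, and the paper tacitly reads the hypotheses the same way. Note, however, that continuity of $\tau\mapsto U(\tau,s)u$ in $\Hilb^\mu$, and hence continuity and integrability of the Duhamel integrand, is in fact automatic here. It follows from the interpolation $\|v\|_\mu\le\|v\|^{\sigma/(\mu+\sigma)}\|v\|_{\mu+\sigma}^{\mu/(\mu+\sigma)}$, combined with strong continuity in $\Hilb$ and the $C_{U,\mu+\sigma}$ bound, using $\sigma>0$. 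So only the strong-solution property genuinely has to be read into the phrase ``generates a unitary propagator''.
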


\begin{proof}
As in the proof of Theorem~\ref{thm:strong-cutoff-convergence}, write
\[
\begin{split}
U_{N}(t,s)P_{N}u-U(t,s)u
={}&
U_{N}(t,s)P_{N}u-P_{N}U(t,s)u
\\
&+
(P_{N}-I)U(t,s)u.
\end{split}
\]
The second term is estimated using
Lemma~\ref{lem:quantitative-spectral-tail} with \(r=0\):
\[
\begin{split}
\|(I-P_{N})U(t,s)u\|
&\leq
(1+N)^{-(\mu+\sigma)}
\|U(t,s)u\|_{\mu+\sigma}
\\
&\leq
C_{U,\mu+\sigma}
(1+N)^{-(\mu+\sigma)}
\|u\|_{\mu+\sigma}.
\end{split}
\]

For the first term, Proposition~\ref{prop:duhamel-cutoff} gives
\[
\begin{split}
&\|P_{N}U(t,s)u-U_{N}(t,s)P_{N}u\|
\\
&\qquad\leq
\int_{\min(s,t)}^{\max(s,t)}
\|H(\tau)(I-P_{N})U(\tau,s)u\|\,d\tau
\\
&\qquad\leq
C_{H,\mu}|t-s|
\sup_{\tau\in I}
\|(I-P_{N})U(\tau,s)u\|_{\mu}.
\end{split}
\]
Applying Lemma~\ref{lem:quantitative-spectral-tail} with \(r=\mu\)
yields
\[
\|(I-P_{N})U(\tau,s)u\|_{\mu}
\leq
(1+N)^{-\sigma}
\|U(\tau,s)u\|_{\mu+\sigma}.
\]
Therefore,
\[
\begin{split}
&\|P_{N}U(t,s)u-U_{N}(t,s)P_{N}u\|
\\
&\qquad\leq
|I|C_{H,\mu}C_{U,\mu+\sigma}
(1+N)^{-\sigma}
\|u\|_{\mu+\sigma}.
\end{split}
\]
Combining the two estimates proves the result.
\end{proof}

\begin{remark}
\label{rem:spectral-rate-interpretation}
The convergence rate is determined by the regularity of the exact orbit
relative to the reference operator \(H_{0}\). In concrete elliptic settings,
the parameter \(\sigma\) measures the number of additional Sobolev derivatives
of the initial datum and of the propagated solution. The estimate is therefore
the natural spectral analogue of the usual rate of Galerkin approximation.
\end{remark}

\subsection{Uniform stability from commutator estimates}
\label{subsec:commutator-stability}

The quantitative theorem above still uses regularity bounds for the exact
propagator. We next give an intrinsic sufficient condition for the uniform
stability of the cut-off propagators. This condition is formulated directly
in terms of the commutator of \(H(t)\) with the Hilbert scale generated by
\(\Lambda=1+H_{0}\).

\begin{assumption}[Energy commutator bound]
\label{ass:energy-commutator-bound}
Let \(r\geq 0\). Assume that there exists a locally integrable function
\(c_{r}:I\to[0,\infty)\) such that
\[
\left|
\ii
\left(
\langle H(t)v,\Lambda^{2r}v\rangle
-
\langle \Lambda^{2r}v,H(t)v\rangle
\right)
\right|
\leq
c_{r}(t)\|v\|_{r}^{2}
\]
for every \(v\in\Hilb^{\infty}\) and almost every \(t\in I\).
\end{assumption}

Equivalently, whenever the commutator is defined as a quadratic form,
Assumption~\ref{ass:energy-commutator-bound} reads
\[
\left|
\langle v,\ii[H(t),\Lambda^{2r}]v\rangle
\right|
\leq
c_{r}(t)\|v\|_{r}^{2}.
\]

\begin{proposition}[Uniform stability of the cut-off propagators]
\label{prop:uniform-cutoff-stability}
Assume Assumption~\ref{ass:energy-commutator-bound}. Then, for every \(N\),
every \(v\in\Hilb_{N}\), and every \(s,t\in I\),
\[
\|U_{N}(t,s)v\|_{r}
\leq
\exp\left(
\frac{1}{2}
\int_{\min(s,t)}^{\max(s,t)}c_{r}(\tau)\,d\tau
\right)
\|v\|_{r}.
\]
In particular, the stability constant is independent of \(N\).
\end{proposition}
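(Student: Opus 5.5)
The plan is an energy estimate for the finite-dimensional solution in the \(\Lambda^{r}\)-norm, followed by Gronwall's lemma. Fix \(N\) and \(v\in\Hilb_N\), and set \(v_N(t)=U_N(t,s)v\). Since \(\Hilb_N\subset\Hilb^\infty\) and \(\Lambda\) commutes with \(P_N\), the function
\[
E(t)=\|v_N(t)\|_r^2=\langle \Lambda^{2r}v_N(t),v_N(t)\rangle
\]
is a finite-dimensional quadratic expression in \(v_N(t)\). Moreover, \(v_N\) is \(C^1\) with values in \(\Hilb_N\) by Lemma~\ref{lem:finite-propagator}. Hence \(E\) is differentiable, and we may compute \(E'(t)\) directly. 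Note that \(\Lambda^{2r}\) is bounded and self-adjoint on \(\Hilb_N\), so no domain issues arise.

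Differentiating and using \(\partial_t v_N=-\ii P_NH(t)P_Nv_N\), we obtain
\[
E'(t)=\langle \Lambda^{2r}(-\ii P_NH(t)v_N),v_N\rangle+\langle \Lambda^{2r}v_N,-\ii P_NH(t)v_N\rangle .
\]
Next we remove the projections. Because \(\Lambda^{2r}v_N\in\Hilb_N\) and \(P_N\) is an orthogonal projection commuting with \(\Lambda\), each occurrence of \(P_N\) can be dropped. This is the key point: the projection is invisible in this pairing, because both vectors lie in \(\Hilb_N\). The result is, up to the sign convention for the inner product,
\[
E'(t)=\pm\,\ii\bigl(\langle H(t)v_N,\Lambda^{2r}v_N\rangle-\langle\Lambda^{2r}v_N,H(t)v_N\rangle\bigr),
\]
with \(v_N=v_N(t)\in\Hilb^\infty\). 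Assumption~\ref{ass:energy-commutator-bound} then gives \(|E'(t)|\le c_r(t)E(t)\) for almost every \(t\).

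Finally, Gronwall's lemma (the integral form, valid for locally integrable \(c_r\) and absolutely continuous \(E\)) yields
\[
E(t)\le \exp\Bigl(\int_{\min(s,t)}^{\max(s,t)}c_r\Bigr)E(s)
\]
in both time directions, since we control \(|E'|\) and not just \(E'\). Taking square roots gives the factor \(\tfrac12\) in the exponent and the claimed bound. None of these constants involves \(N\).

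The main obstacle I expect is the bookkeeping in the identification of \(E'(t)\) with the form in Assumption~\ref{ass:energy-commutator-bound}. One must check carefully that \(\langle\Lambda^{2r}P_NH v_N,v_N\rangle=\langle H v_N,\Lambda^{2r}v_N\rangle\) using self-adjointness of \(\Lambda^{2r}P_N\). One must also check that the hypothesis, stated for \(v\in\Hilb^\infty\), applies pointwise in \(t\) to \(v=v_N(t)\); this holds since \(\Hilb_N\subset\Hilb^\infty\). The only further care needed is the almost-everywhere nature of the bound, which is why I would use the integral form of Gronwall's lemma.
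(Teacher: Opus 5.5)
Your proposal is correct and follows the paper's proof essentially step by step. You differentiate \(\|v_N(t)\|_r^2\) along the finite-dimensional flow, drop the projections because \(P_N\) commutes with \(\Lambda\) and \(v_N(t)\in\Hilb_N\subset\Hilb^\infty\), apply Assumption~\ref{ass:energy-commutator-bound} to get \(|\tfrac{d}{dt}\|v_N(t)\|_r^2|\le c_r(t)\|v_N(t)\|_r^2\), and conclude with Gronwall in both time directions and a square root; your use of the integral form of Gronwall for an almost-everywhere bound with merely locally integrable \(c_r\) is a point of care the paper leaves implicit.
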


\begin{proof}
Fix \(N\), \(s\in I\), and \(v\in\Hilb_{N}\), and set
\[
v_{N}(t)=U_{N}(t,s)v.
\]
Since \(\Hilb_{N}\subset\Hilb^{\infty}\), all the following
computations take place in a finite-dimensional space. We have
\[
\ii\partial_{t}v_{N}(t)=H_{N}(t)v_{N}(t).
\]
Hence
\[
\begin{split}
\frac{d}{dt}\|v_{N}(t)\|_{r}^{2}
&=
\frac{d}{dt}
\langle \Lambda^{r}v_{N}(t),\Lambda^{r}v_{N}(t)\rangle
\\
&=
\ii
\left(
\langle H_{N}(t)v_{N}(t),\Lambda^{2r}v_{N}(t)\rangle
\right.
\\
&\hspace{35mm}\left.
-
\langle \Lambda^{2r}v_{N}(t),H_{N}(t)v_{N}(t)\rangle
\right).
\end{split}
\]
Because \(P_{N}\) commutes with \(\Lambda\) and
\(v_{N}(t)\in\Hilb_{N}\), the expression on the right-hand side equals
\[
\ii
\left(
\langle H(t)v_{N}(t),\Lambda^{2r}v_{N}(t)\rangle
-
\langle \Lambda^{2r}v_{N}(t),H(t)v_{N}(t)\rangle
\right).
\]
Assumption~\ref{ass:energy-commutator-bound} therefore implies
\[
\left|
\frac{d}{dt}\|v_{N}(t)\|_{r}^{2}
\right|
\leq
c_{r}(t)\|v_{N}(t)\|_{r}^{2}.
\]
Gronwall's inequality gives
\[
\|v_{N}(t)\|_{r}^{2}
\leq
\exp\left(
\int_{\min(s,t)}^{\max(s,t)}c_{r}(\tau)\,d\tau
\right)
\|v\|_{r}^{2}.
\]
Taking square roots proves the result.
\end{proof}

\begin{remark}
The important point is that the estimate is obtained before passing to the
limit and is uniform in the spectral cut-off. Thus the stability assumption
used in the energy-norm convergence theorem is no longer merely postulated:
it follows from a commutator estimate on the original Hamiltonian family.
\end{remark}

\subsection{Construction of the propagator by removal of the cut-off}
\label{subsec:constructive-propagator}

We now remove the assumption that the full propagator is known in advance.
Under uniform estimates on the spectral scale, the cut-off propagators form a
Cauchy family and therefore construct the non-autonomous evolution.

\begin{theorem}[Construction by spectral cut-off]
\label{thm:construction-by-spectral-cutoff}
Let \(I\subset\mathbb{R}\) be compact. Assume that there exist
\(\mu\geq 0\) and \(\sigma>0\) such that the following conditions hold.

\begin{enumerate}
\item For every \(r\geq 0\), the family \(H(t)\) is norm-continuous as a map
\[
I\longrightarrow
\mathcal{L}(\Hilb^{r+\mu},\Hilb^{r}).
\]

\item The energy commutator bound of
Assumption~\ref{ass:energy-commutator-bound} holds for
\(r=\mu+\sigma\), with
\[
c_{\mu+\sigma}\in L^{1}(I).
\]

\item Each \(H(t)\) is symmetric on the common core
\(\Hilb^{\infty}\).
\end{enumerate}

Then, for every \(u\in\Hilb^{\mu+\sigma}\), the family
\[
U_{N}(t,s)P_{N}u
\]
is Cauchy in \(\Hilb\), uniformly for \((t,s)\in I\times I\).
Consequently, the limit
\[
U(t,s)u
:=
\lim_{N\to\infty}U_{N}(t,s)P_{N}u
\]
exists uniformly on \(I\times I\).

Moreover, there exists a constant \(C_{I,\mu,\sigma}>0\) such that, whenever
\(M\geq N\),
\[
\sup_{s,t\in I}
\|U_{M}(t,s)P_{M}u-U_{N}(t,s)P_{N}u\|
\leq
C_{I,\mu,\sigma}(1+N)^{-\sigma}
\|u\|_{\mu+\sigma}.
\]
The limiting family extends uniquely to a strongly continuous unitary
propagator on \(\Hilb\).

If the preceding assumptions hold at every level of the Hilbert scale, then
\(U(t,s)\) preserves \(\Hilb^{\infty}\), and for
\(u\in\Hilb^{\infty}\) one has
\[
\ii\partial_{t}U(t,s)u
=
H(t)U(t,s)u.
\]
Thus the full non-autonomous Hamiltonian evolution is constructed as the
spectral cut-off limit.
\end{theorem}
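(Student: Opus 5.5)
The plan is to compare two cut-off levels directly, mimicking the Duhamel argument of Proposition~\ref{prop:duhamel-cutoff}. The role of the exact propagator is played by the finer cut-off propagator \(U_M\), and the needed regularity comes from Proposition~\ref{prop:uniform-cutoff-stability} rather than from an a priori bound on \(U\).

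\textbf{Step 1 (Duhamel between two levels).} Fix \(M\ge N\) and \(u\in\Hilb^{\mu+\sigma}\), and set \(w(t)=P_NU_M(t,s)P_Mu\). Since \(P_NP_M=P_N\), and \(P_N\) commutes with \(P_M\) and \(\Lambda\), the computation of Lemma~\ref{lem:projected-exact} carries over with \(U\) replaced by \(U_M\). This gives
\[
\ii\partial_t w=H_N(t)w+P_NH(t)(P_M-P_N)U_M(t,s)P_Mu .
\]
Variation of constants then yields
\[
\|P_NU_M(t,s)P_Mu-U_N(t,s)P_Nu\|\le\int_{\min(s,t)}^{\max(s,t)}\|H(\tau)(I-P_N)U_M(\tau,s)P_Mu\|\,d\tau .
\]
Hypothesis (1) with \(r=0\) bounds the integrand by \(C_{H,\mu}\|(I-P_N)U_M(\tau,s)P_Mu\|_\mu\). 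Lemma~\ref{lem:quantitative-spectral-tail} bounds this in turn by \((1+N)^{-\sigma}\|U_M(\tau,s)P_Mu\|_{\mu+\sigma}\). Proposition~\ref{prop:uniform-cutoff-stability} with \(r=\mu+\sigma\) gives \(\|U_M(\tau,s)P_Mu\|_{\mu+\sigma}\le e^{\frac12\|c_{\mu+\sigma}\|_{L^1}}\|u\|_{\mu+\sigma}\), uniformly in \(M\). The remaining term \((I-P_N)U_M(t,s)P_Mu\) is bounded the same way by \((1+N)^{-(\mu+\sigma)}\) times that constant. Together these give the stated Cauchy estimate with \(C_{I,\mu,\sigma}\) independent of \(M,N\). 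Hence the uniform limit \(U(t,s)u\) exists on \(I\times I\).

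\textbf{Step 2 (extension and propagator properties).} Each \(U_N(t,s)P_N\) is a contraction on \(\Hilb\). The limit is therefore isometric on the dense subspace \(\Hilb^{\mu+\sigma}\), so it extends uniquely to an isometry of \(\Hilb\), and strong convergence extends to all of \(\Hilb\) by a \(3\varepsilon\) argument. Strong continuity in \((t,s)\) follows because the limit is uniform and each \(U_N\) is continuous. For the composition law, write \(U_N(t,r)U_N(r,s)P_Nu=U_N(t,s)P_Nu\) and pass to the limit using equicontinuity (uniformly bounded operators converging strongly, applied to the converging vectors \(U_N(r,s)P_Nu\)). With \(U(s,s)=\Id\), the composition law gives \(U(t,s)U(s,t)=\Id\), so each isometry is surjective and hence unitary.

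\textbf{Step 3 (regularity and the equation).} If the hypotheses hold at every level \(r\), the bound \(\|U_N(t,s)P_Nu\|_r\le C_r\|u\|_r\) is uniform in \(N\). Weak compactness in \(\Hilb^r\), together with strong convergence in \(\Hilb\), then puts the limit in \(\Hilb^r\) with the same bound, so \(U\) preserves \(\Hilb^\infty\). Interpolating between the \(\Hilb\)-convergence and the uniform \(\Hilb^{r+1}\)-bounds upgrades the convergence to \(\Hilb^{r}\) for every \(r\).

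To get the equation, write the integral form
\[
U_N(t,s)P_Nu=P_Nu-\ii\int_s^tP_NH(\tau)P_NU_N(\tau,s)P_Nu\,d\tau .
\]
Since \(U_N(\tau,s)P_Nu\to U(\tau,s)u\) in \(\Hilb^\mu\) uniformly in \(\tau\), and \(P_NH(\tau)P_N\to H(\tau)\) strongly on \(\Hilb^{\mu+\sigma}\) with uniformly bounded \(\mathcal L(\Hilb^{\mu+\sigma},\Hilb)\)-norms (by Lemma~\ref{lem:quantitative-spectral-tail}), the integrand converges uniformly in \(\Hilb\). Passing to the limit gives \(U(t,s)u=u-\ii\int_s^tH(\tau)U(\tau,s)u\,d\tau\). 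The integrand is continuous in \(\tau\), so differentiating yields \(\ii\partial_tU(t,s)u=H(t)U(t,s)u\).

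\textbf{Main obstacle.} I expect Step 3 to be the delicate part: upgrading strong \(\Hilb\)-convergence to convergence in \(\Hilb^\mu\), uniformly in \(\tau\), so that the Duhamel integrands converge. I would use the interpolation inequality \(\|x\|_\mu\le\|x\|^{\theta}\|x\|_{\mu+\sigma'}^{1-\theta}\) with the uniform higher-order bounds. This is precisely why the statement requires the hypotheses at every scale level.
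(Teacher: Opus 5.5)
Your proposal is correct and follows the paper's proof essentially step for step. It uses the same two-level Duhamel comparison of $P_NU_M(t,s)P_Mu$ with $U_N(t,s)P_Nu$, closed by the spectral tail lemma and the $M$-uniform stability at level $\mu+\sigma$, followed by density, unitarity and passage to the limit in the integral equation. The only variation is in the last step, where you upgrade convergence to $\Hilb^r$ by weak compactness and interpolation instead of rerunning the Cauchy estimate at each level as the paper does; this reaches the same conclusion slightly more economically.
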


\begin{proof}
Fix \(M\geq N\), \(s\in I\), and
\(u\in\Hilb^{\mu+\sigma}\). Set
\[
w_{M}(t)=U_{M}(t,s)P_{M}u
\]
and
\[
z_{N,M}(t)=P_{N}w_{M}(t).
\]
Since \(P_{N}P_{M}=P_{N}\), we have
\[
z_{N,M}(s)=P_{N}u.
\]
Moreover,
\[
\begin{split}
\ii\partial_{t}z_{N,M}(t)
&=
P_{N}H(t)P_{M}w_{M}(t)
\\
&=
P_{N}H(t)P_{N}z_{N,M}(t)
+
P_{N}H(t)(P_{M}-P_{N})w_{M}(t)
\\
&=
H_{N}(t)z_{N,M}(t)
+
P_{N}H(t)(P_{M}-P_{N})w_{M}(t).
\end{split}
\]
Variation of constants in \(\Hilb_{N}\) gives
\[
\begin{split}
z_{N,M}(t)-U_{N}(t,s)P_{N}u
=
-\ii
\int_{s}^{t}
U_{N}(t,\tau)
P_{N}H(\tau)(P_{M}-P_{N})w_{M}(\tau)\,d\tau.
\end{split}
\]
By unitarity of \(U_{N}\) in \(\Hilb\),
\[
\begin{split}
&\|z_{N,M}(t)-U_{N}(t,s)P_{N}u\|
\\
&\qquad\leq
C_{H,\mu}|I|
\sup_{\tau\in I}
\|(P_{M}-P_{N})w_{M}(\tau)\|_{\mu},
\end{split}
\]
where
\[
C_{H,\mu}
=
\sup_{\tau\in I}
\|H(\tau)\|_{\mathcal{L}(\Hilb^{\mu},\Hilb)}.
\]
Since
\[
P_{M}-P_{N}=P_{M}(I-P_{N}),
\]
Lemma~\ref{lem:quantitative-spectral-tail} implies
\[
\|(P_{M}-P_{N})w_{M}(\tau)\|_{\mu}
\leq
(1+N)^{-\sigma}\|w_{M}(\tau)\|_{\mu+\sigma}.
\]
By Proposition~\ref{prop:uniform-cutoff-stability},
\[
\sup_{\tau,s\in I}
\|U_{M}(\tau,s)P_{M}u\|_{\mu+\sigma}
\leq
C_{\mathrm{stab}}\|u\|_{\mu+\sigma},
\]
where
\[
C_{\mathrm{stab}}
=
\exp\left(
\frac{1}{2}\int_{I}c_{\mu+\sigma}(\tau)\,d\tau
\right)
\]
is independent of \(M\). Hence
\[
\sup_{s,t\in I}
\|z_{N,M}(t)-U_{N}(t,s)P_{N}u\|
\leq
C_{H,\mu}|I|C_{\mathrm{stab}}
(1+N)^{-\sigma}
\|u\|_{\mu+\sigma}.
\]

It remains to estimate the component of \(w_{M}(t)\) orthogonal to
\(\Hilb_{N}\). By Lemma~\ref{lem:quantitative-spectral-tail},
\[
\begin{split}
\|(I-P_{N})w_{M}(t)\|
&\leq
(1+N)^{-(\mu+\sigma)}
\|w_{M}(t)\|_{\mu+\sigma}
\\
&\leq
C_{\mathrm{stab}}
(1+N)^{-(\mu+\sigma)}
\|u\|_{\mu+\sigma}.
\end{split}
\]
Since
\[
w_{M}(t)-U_{N}(t,s)P_{N}u
=
(I-P_{N})w_{M}(t)
+
z_{N,M}(t)-U_{N}(t,s)P_{N}u,
\]
we obtain the asserted Cauchy estimate.

The limit therefore exists for
\(u\in\Hilb^{\mu+\sigma}\). Since this space is dense in
\(\Hilb\) and
\[
\|U_{N}(t,s)P_{N}u\|=\|P_{N}u\|\leq\|u\|,
\]
the limiting operators extend uniquely to contractions on \(\Hilb\).
Applying the same construction to the backward propagators \(U_{N}(s,t)\)
shows that
\[
U(t,s)U(s,t)=U(s,t)U(t,s)=I.
\]
Thus \(U(t,s)\) is unitary.

The propagator identity follows by passing to the limit in
\[
U_{N}(t,r)U_{N}(r,s)=U_{N}(t,s),
\]
first on the dense regularity space and then on \(\Hilb\).
Strong continuity follows from the uniform convergence on \(I\times I\).

Finally, if the assumptions hold at every level of the Hilbert scale, the
same argument gives convergence in each \(\Hilb^{r}\). For
\(u\in\Hilb^{\infty}\), one may therefore pass to the limit in the
integral equation
\[
U_{N}(t,s)P_{N}u
=
P_{N}u
-
\ii
\int_{s}^{t}
P_{N}H(\tau)P_{N}
U_{N}(\tau,s)P_{N}u\,d\tau.
\]
This yields
\[
U(t,s)u
=
u
-
\ii
\int_{s}^{t}H(\tau)U(\tau,s)u\,d\tau.
\]
The norm-continuity of
\[
t\longmapsto H(t)\in
\mathcal{L}(\Hilb^{r+\mu},\Hilb^{r})
\]
then implies that \(t\mapsto U(t,s)u\) is differentiable in
\(\Hilb\) and satisfies
\[
\ii\partial_{t}U(t,s)u=H(t)U(t,s)u.
\]
\end{proof}
\begin{corollary}[Quantitative convergence in energy norms]
\label{cor:quantitative-energy-convergence}
Let \(r\geq0\) and \(\sigma>0\). Assume that
\[
H(t):\Hilb^{r+\mu}\longrightarrow\Hilb^r
\]
is norm-continuous in \(t\), and that the energy commutator bound holds
at both levels \(r\) and \(r+\mu+\sigma\). Then there exists
\(C_{I,r,\mu,\sigma}>0\) such that
\[
\sup_{s,t\in I}
\|U_N(t,s)P_Nu-U(t,s)u\|_r
\leq
C_{I,r,\mu,\sigma}(1+N)^{-\sigma}
\|u\|_{r+\mu+\sigma}
\]
for every \(u\in\Hilb^{r+\mu+\sigma}\).
\end{corollary}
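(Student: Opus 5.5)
Here \(U(t,s)\) is the propagator constructed in Theorem~\ref{thm:construction-by-spectral-cutoff}; for it to exist we use the commutator bound at level \(\mu+\sigma\le r+\mu+\sigma\), or the bound at level \(r+\mu+\sigma\) together with the fact that \(c\) at a higher level controls the construction. The strategy is to redo the Cauchy argument of Theorem~\ref{thm:construction-by-spectral-cutoff} in the \(\Hilb^r\)-norm and then let \(M\to\infty\).

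\textbf{Step 1: the Cauchy estimate in \(\Hilb^r\).} Fix \(M\geq N\), \(u\in\Hilb^{r+\mu+\sigma}\), and set
\[
w_M(t)=U_M(t,s)P_Mu,\qquad z_{N,M}=P_Nw_M .
\]
Split
\[
w_M-U_N(t,s)P_Nu=(I-P_N)w_M+\bigl(z_{N,M}-U_N(t,s)P_Nu\bigr).
\]
For the tail term, Lemma~\ref{lem:quantitative-spectral-tail} gives
\[
\|(I-P_N)w_M\|_r\le(1+N)^{-(\mu+\sigma)}\|w_M\|_{r+\mu+\sigma}.
\]
Proposition~\ref{prop:uniform-cutoff-stability} at level \(r+\mu+\sigma\) bounds \(\|w_M\|_{r+\mu+\sigma}\) by \(C_{\mathrm{stab}}\|u\|_{r+\mu+\sigma}\), uniformly in \(M\).

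For the second term, use the Duhamel identity of Theorem~\ref{thm:construction-by-spectral-cutoff}:
\[
z_{N,M}(t)-U_N(t,s)P_Nu=-\ii\int_s^tU_N(t,\tau)P_NH(\tau)(P_M-P_N)w_M(\tau)\,d\tau .
\]
Unitarity is no longer available, so we need stability of \(U_N(t,\tau)\) on \(\Hilb^r\). This is supplied by Proposition~\ref{prop:uniform-cutoff-stability} at level \(r\), with constant \(e^{\frac12\int_I c_r}\) independent of \(N\). Note that \(U_N(t,\tau)\) acts on \(\Hilb_N\) and \(P_N\) is an \(\Hilb^r\)-contraction. Hence the integrand is bounded by
\[
e^{\frac12\int c_r}\,\|H(\tau)\|_{\mathcal L(\Hilb^{r+\mu},\Hilb^r)}\,\|(P_M-P_N)w_M\|_{r+\mu}.
\]
Lemma~\ref{lem:quantitative-spectral-tail} with exponent \(\sigma\) turns this into
\[
(1+N)^{-\sigma}\|w_M\|_{r+\mu+\sigma}\le(1+N)^{-\sigma}C_{\mathrm{stab}}\|u\|_{r+\mu+\sigma}.
\]
Collecting terms gives
\[
\sup_{s,t}\|U_MP_Mu-U_NP_Nu\|_r\le C(1+N)^{-\sigma}\|u\|_{r+\mu+\sigma}.
\]

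\textbf{Step 2: passage to the limit \(M\to\infty\).} By Theorem~\ref{thm:construction-by-spectral-cutoff}, \(U_M(t,s)P_Mu\to U(t,s)u\) in \(\Hilb\). Step 1 shows the sequence is Cauchy in \(\Hilb^r\), hence convergent in \(\Hilb^r\). Its \(\Hilb^r\)-limit must coincide with the \(\Hilb\)-limit because \(\Hilb^r\hookrightarrow\Hilb\) continuously. Alternatively, one can avoid the Cauchy argument by using that \(\|\cdot\|_r\) is lower semicontinuous under \(\Hilb\)-convergence, via the spectral representation and Fatou. Letting \(M\to\infty\) in the Step 1 bound, with \(N\) fixed, then gives the corollary uniformly in \((s,t)\).

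\textbf{Main obstacle.} The delicate point is the \(\Hilb^r\)-stability of \(U_N(t,\tau)\) in the Duhamel term, which is exactly why the commutator bound is needed at level \(r\) as well as at level \(r+\mu+\sigma\). I would also check that \(c_r\) and \(c_{r+\mu+\sigma}\) are integrable on \(I\), so that the constants are finite; I take this to be implicit in the hypothesis, as it is in Theorem~\ref{thm:construction-by-spectral-cutoff}. Everything else is the same bookkeeping as before, with the constant
\[
C_{I,r,\mu,\sigma}=C_{\mathrm{stab},r+\mu+\sigma}\Bigl(1+|I|\,e^{\frac12\int_Ic_r}\sup_\tau\|H(\tau)\|_{\mathcal L(\Hilb^{r+\mu},\Hilb^r)}\Bigr).
\]
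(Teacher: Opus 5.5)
Your proposal is correct and is essentially the paper's own argument. The Cauchy estimate of Theorem~\ref{thm:construction-by-spectral-cutoff} is redone in \(\Hilb^r\): Proposition~\ref{prop:uniform-cutoff-stability} at level \(r\) replaces unitarity of \(U_N(t,\tau)\) in the Duhamel term, the same proposition at level \(r+\mu+\sigma\) bounds \(w_M\), Lemma~\ref{lem:quantitative-spectral-tail} supplies the factor \((1+N)^{-\sigma}\), and one then lets \(M\to\infty\). Your opening hedge about which commutator level guarantees that \(U\) exists is unnecessary, since Step~1 already makes \(U_M(t,s)P_Mu\) Cauchy in \(\Hilb^r\subset\Hilb\), uniformly in \((t,s)\).
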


\begin{proof}
The proof of Theorem~\ref{thm:construction-by-spectral-cutoff} is
repeated in the Hilbert space \(\Hilb^r\).
Proposition~\ref{prop:uniform-cutoff-stability}, applied at level \(r\),
controls the cut-off propagator in the Duhamel term, while the same
proposition at level \(r+\mu+\sigma\) controls the high-regularity norm
of the comparison dynamics.  Lemma~
\ref{lem:quantitative-spectral-tail} gives
\[
\|(I-P_N)v\|_{r+\mu}
\leq
(1+N)^{-\sigma}\|v\|_{r+\mu+\sigma}.
\]
The resulting Cauchy estimate is uniform for
\((t,s)\in I\times I\). Passing to the limit gives the stated
inequality.
\end{proof}

\begin{proof}
The proof of Theorem~\ref{thm:construction-by-spectral-cutoff} is
repeated in the Hilbert space \(\Hilb^r\).  Proposition~
\ref{prop:uniform-cutoff-stability}, applied at level \(r\), replaces
unitarity in the estimate for the Duhamel term.  Lemma~
\ref{lem:quantitative-spectral-tail} gives
\[
\|(I-P_N)v\|_{r+\mu}
\leq
(1+N)^{-\sigma}\|v\|_{r+\mu+\sigma}.
\]
The resulting Cauchy estimate is uniform for \((t,s)\in I\times I\).
Passing to the limit gives the stated inequality.
\end{proof}
\begin{remark}[Removal of the logical circularity]
\label{rem:removal-circularity}
Theorem~\ref{thm:construction-by-spectral-cutoff} reverses the logical order
of Theorem~\ref{thm:strong-cutoff-convergence}. The full propagator is no longer an
input of the approximation theorem. It is obtained as the uniform strong
limit of the finite-dimensional cut-off propagators. Consequently, the
spectral approximation becomes a genuine construction of the evolution
rather than a consistency statement relative to an evolution assumed to
exist in advance.
\end{remark}

\subsection{Quantitative time-slicing estimates}
\label{subsec:quantitative-time-slicing}

We now estimate the time-slicing error uniformly with respect to the spectral
cut-off. Assume that there exist \(\alpha\in(0,1]\) and
\(L_{\alpha,\mu}>0\) such that
\[
 \|H(t)-H(s)\|_{\mathcal L(\Hilb^\mu,\Hilb)}
 \leq L_{\alpha,\mu}|t-s|^\alpha,
 \qquad s,t\in I.
\]
For a partition
\[
 \Pi=\{s=t_0<t_1<\cdots<t_M=t\},
 \qquad
 |\Pi|=\max_j(t_{j+1}-t_j),
\]
define
\[
 U_{N,\Pi}(t,s)
 =
 \prod_{j=M-1}^{0}
 \exp\bigl(-\ii(t_{j+1}-t_j)H_N(t_j)\bigr).
\]

\begin{lemma}[Hölder estimate for the cut-off Hamiltonians]
\label{lem:cutoff-holder-estimate}
For every \(N\geq0\) and every \(s,t\in I\),
\[
 \|H_N(t)-H_N(s)\|_{\mathcal L(\Hilb_N)}
 \leq
 L_{\alpha,\mu}(1+N)^\mu|t-s|^\alpha.
\]
\end{lemma}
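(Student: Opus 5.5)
The estimate reduces to the observation that on $\Hilb_N$ the $\Hilb^\mu$-norm is controlled by the $\Hilb$-norm, with a constant $(1+N)^\mu$. Fix $N\geq0$ and $s,t\in I$. For $v\in\Hilb_N$ we write
\[
 (H_N(t)-H_N(s))v=P_N\bigl(H(t)-H(s)\bigr)P_Nv .
\]
Since $P_N$ is an orthogonal projection on $\Hilb$, this gives
\[
 \|(H_N(t)-H_N(s))v\|\leq \|(H(t)-H(s))v\|
 \leq L_{\alpha,\mu}|t-s|^\alpha\|v\|_\mu ,
\]
where the second inequality is the Hölder hypothesis in $\mathcal L(\Hilb^\mu,\Hilb)$. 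Here we use that $v\in\Hilb_N\subset\Hilb^\infty\subset\Hilb^\mu$, so the extension of $H(t)$ to $\Hilb^\mu$ agrees with $H(t)$ on $v$.

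The remaining step is an inverse (Bernstein-type) inequality on the spectral subspace:
\[
 \|v\|_\mu\leq(1+N)^\mu\|v\|,\qquad v\in\Hilb_N .
\]
We would prove it with the spectral theorem, as in Lemma~\ref{lem:quantitative-spectral-tail}. Since $v=P_Nv$, the spectral measure $d\langle E_{H_0}(\lambda)v,v\rangle$ is supported in $[0,N]$. On that interval $(1+\lambda)^{2\mu}\leq(1+N)^{2\mu}$ because $\mu\geq0$. Integrating gives $\|v\|_\mu^2\leq(1+N)^{2\mu}\|v\|^2$. Equivalently, $\Lambda^\mu P_N=(1+H_0)^\mu\mathbf 1_{[0,N]}(H_0)$ has operator norm at most $(1+N)^\mu$.

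Combining the two displays and taking the supremum over unit vectors $v\in\Hilb_N$ yields
\[
 \|H_N(t)-H_N(s)\|_{\mathcal L(\Hilb_N)}\leq L_{\alpha,\mu}(1+N)^\mu|t-s|^\alpha .
\]
There is no real obstacle here. The only point needing care is that the norm on $\mathcal L(\Hilb_N)$ is the one induced by $\Hilb$, so the factor $(1+N)^\mu$ is unavoidable. This factor is exactly what later produces the term $(1+N)^\mu M^{-\alpha}$ in the joint spectral and time-slicing estimate.
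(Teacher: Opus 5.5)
Your proof is correct and follows the paper's argument: apply the H\"older hypothesis in $\mathcal L(\Hilb^\mu,\Hilb)$ to $v\in\Hilb_N$, then use that $v$ has spectral support in $[0,N]$ to get $\|v\|_\mu\leq(1+N)^\mu\|v\|$. Your spectral-theorem derivation of that inverse inequality just spells out the step the paper states in one line.
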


\begin{proof}
For \(v\in\Hilb_N\),
\[
\begin{aligned}
 \|(H_N(t)-H_N(s))v\|
 &\leq L_{\alpha,\mu}|t-s|^\alpha\|v\|_\mu \\
 &\leq L_{\alpha,\mu}(1+N)^\mu
 |t-s|^\alpha\|v\|,
\end{aligned}
\]
because the spectral support of \(v\) is contained in \([0,N]\).
\end{proof}

\begin{proposition}[Time-slicing error at spectral level \(N\)]
\label{prop:uniform-time-slicing-error}
The exact cut-off propagator and its left-endpoint time-sliced approximation
satisfy
\[
 \|U_{N,\Pi}(t,s)-U_N(t,s)\|_{\mathcal L(\Hilb_N)}
 \leq
 \frac{L_{\alpha,\mu}}{\alpha+1}
 (1+N)^\mu |t-s| |\Pi|^\alpha.
\]
\end{proposition}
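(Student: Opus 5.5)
The plan is a telescoping comparison on each slice followed by a Duhamel estimate inside a single slice, with Lemma~\ref{lem:cutoff-holder-estimate} supplying the constant. Write \(E_j=\exp(-\ii\Delta t_jH_N(t_j))\) and \(U_j=U_N(t_{j+1},t_j)\). Both are unitary on \(\Hilb_N\): \(E_j\) because \(H_N(t_j)\) is self-adjoint, and \(U_j\) by Lemma~\ref{lem:finite-propagator}. We use the standard telescoping identity
\[
E_{M-1}\cdots E_0-U_{M-1}\cdots U_0
=
\sum_{j=0}^{M-1}
E_{M-1}\cdots E_{j+1}\,(E_j-U_j)\,U_{j-1}\cdots U_0 ,
\]
together with the cocycle property \(U_{M-1}\cdots U_0=U_N(t,s)\). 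Since every outer factor is unitary, this gives
\[
\|U_{N,\Pi}(t,s)-U_N(t,s)\|_{\mathcal L(\Hilb_N)}
\leq
\sum_{j=0}^{M-1}\|E_j-U_j\|_{\mathcal L(\Hilb_N)}.
\]

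Next we estimate one slice. On \([t_j,t_{j+1}]\), compare the autonomous propagator \(\ee^{-\ii(\tau-t_j)H_N(t_j)}\) with \(U_N(\tau,t_j)\). Differentiate \(\tau\mapsto U_N(t_{j+1},\tau)\ee^{-\ii(\tau-t_j)H_N(t_j)}\), using \(\partial_\tau U_N(t,\tau)=\ii U_N(t,\tau)H_N(\tau)\), which follows from the composition law in finite dimension. This yields the Duhamel identity
\[
E_j-U_j
=
-\ii\int_{t_j}^{t_{j+1}}
U_N(t_{j+1},\tau)\bigl(H_N(t_j)-H_N(\tau)\bigr)
\ee^{-\ii(\tau-t_j)H_N(t_j)}\,d\tau .
\]
By unitarity and Lemma~\ref{lem:cutoff-holder-estimate},
\[
\|E_j-U_j\|
\leq
L_{\alpha,\mu}(1+N)^\mu\int_{t_j}^{t_{j+1}}(\tau-t_j)^\alpha\,d\tau
=
\frac{L_{\alpha,\mu}}{\alpha+1}(1+N)^\mu\Delta t_j^{\alpha+1}.
\]

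To finish, bound \(\Delta t_j^{\alpha+1}\leq|\Pi|^\alpha\Delta t_j\) and sum over \(j\). Since \(\sum_j\Delta t_j=|t-s|\), this gives exactly the stated constant \(\frac{L_{\alpha,\mu}}{\alpha+1}(1+N)^\mu|t-s||\Pi|^\alpha\).

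The only delicate point is justifying the differentiation in the Duhamel step. Here \(H_N\) is merely continuous, not differentiable, so we cannot differentiate it directly. However, \(U_N(t,\tau)\) is \(C^1\) in \(\tau\) for continuous matrix coefficients, and the identity \(U_N(t,\tau)U_N(\tau,t)=\Id\) gives \(\partial_\tau U_N(t,\tau)\) by differentiating the inverse. Everything takes place in the finite-dimensional space \(\Hilb_N\), so no domain issues arise.
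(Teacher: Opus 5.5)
Your proof is correct and is essentially the paper's argument. The paper applies Duhamel's formula once, comparing \(U_N\) with the propagator of the piecewise-constant Hamiltonian \(H_{N,\Pi}(\tau)=H_N(t_j)\) for \(\tau\in[t_j,t_{j+1})\); your telescoping sum combined with a Duhamel formula on each slice is the same computation split slice by slice, and it ends with the same bound \(\int_s^t\|H_N(\tau)-H_{N,\Pi}(\tau)\|\,d\tau\) (your version only ever differentiates propagators with continuous generators, which sidesteps the jumps of \(H_{N,\Pi}\)).
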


\begin{proof}
Let
\[
 H_{N,\Pi}(\tau)=H_N(t_j),
 \qquad \tau\in[t_j,t_{j+1}).
\]
Its propagator is \(U_{N,\Pi}\). Duhamel's formula and unitarity give
\[
\begin{aligned}
 \|U_N(t,s)-U_{N,\Pi}(t,s)\|
 &\leq
 \int_s^t\|H_N(\tau)-H_{N,\Pi}(\tau)\|\,d\tau \\
 &\leq
 \frac{L_{\alpha,\mu}}{\alpha+1}(1+N)^\mu
 \sum_{j=0}^{M-1}(t_{j+1}-t_j)^{\alpha+1} \\
 &\leq
 \frac{L_{\alpha,\mu}}{\alpha+1}(1+N)^\mu
 |t-s||\Pi|^\alpha.
\end{aligned}
\]
For \(t<s\), the same argument is applied on \([t,s]\).
\end{proof}

\begin{remark}
The factor \((1+N)^\mu\) reflects the growth of the cut-off Hamiltonian in
the ambient Hilbert norm. Thus a joint limit requires the mesh of the time
partition to decrease sufficiently rapidly relative to the spectral cut-off.
\end{remark}

\subsection{Joint removal of the spectral and temporal cut-offs}
\label{subsec:joint-removal-cutoffs}

Combining the spectral convergence rate with the preceding time-slicing
estimate gives a quantitative joint approximation theorem.

\begin{theorem}[Joint spectral and time-slicing estimate]
\label{thm:joint-spectral-time-slicing}
Assume the hypotheses of
Theorem~\ref{thm:construction-by-spectral-cutoff}. Suppose moreover that
\(H\) is Hölder continuous of exponent \(\alpha\in(0,1]\) as a map
\[
 I\longrightarrow\mathcal L(\Hilb^\mu,\Hilb),
\]
and that the uniform energy estimate used in the constructive theorem holds
at level \(\mu+\sigma\), for some \(\sigma>0\). Then there exists a constant
\(C_{I,\mu,\sigma,\alpha}>0\), independent of \(N\) and \(\Pi\), such that
\[
\begin{aligned}
 &\|U_{N,\Pi}(t,s)P_Nu-U(t,s)u\| \\
 &\qquad\leq
 C_{I,\mu,\sigma,\alpha}
 \left((1+N)^{-\sigma}+(1+N)^\mu|\Pi|^\alpha\right)
 \|u\|_{\mu+\sigma}
\end{aligned}
\]
for every \(u\in\Hilb^{\mu+\sigma}\) and every \(s,t\in I\).

Consequently, if
\[
 N_k\longrightarrow\infty,
 \qquad
 |\Pi_k|\longrightarrow0,
 \qquad
 (1+N_k)^\mu|\Pi_k|^\alpha\longrightarrow0,
\]
then
\[
 U_{N_k,\Pi_k}(t,s)P_{N_k}u
 \longrightarrow U(t,s)u
\]
in \(\Hilb\), uniformly for \((t,s)\in I\times I\).
\end{theorem}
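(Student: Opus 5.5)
The plan is a triangle inequality through the exact cut-off propagator \(U_N(t,s)\):
\[
\|U_{N,\Pi}(t,s)P_Nu-U(t,s)u\|
\leq
\|(U_{N,\Pi}(t,s)-U_N(t,s))P_Nu\|
+
\|U_N(t,s)P_Nu-U(t,s)u\|.
\]
The first term is purely a time-slicing error at fixed spectral level. The second is purely a spectral error. Each is already controlled by an earlier result, with constants independent of \(N\) and \(\Pi\).

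For the first term, apply Proposition~\ref{prop:uniform-time-slicing-error} on the finite-dimensional space \(\Hilb_N\). Since \(\|P_Nu\|\leq\|u\|\leq\|u\|_{\mu+\sigma}\) (because \(\Lambda\geq1\)) and \(|t-s|\leq|I|\), it gives the bound
\[
\frac{L_{\alpha,\mu}|I|}{\alpha+1}(1+N)^\mu|\Pi|^\alpha\|u\|_{\mu+\sigma}.
\]

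For the second term, Theorem~\ref{thm:construction-by-spectral-cutoff} already gives the Cauchy estimate
\[
\sup_{s,t}\|U_{M'}(t,s)P_{M'}u-U_N(t,s)P_Nu\|\leq C_{I,\mu,\sigma}(1+N)^{-\sigma}\|u\|_{\mu+\sigma}
\]
for all \(M'\geq N\). Here \(U(t,s)u\) is defined as the limit of \(U_{M'}(t,s)P_{M'}u\), so letting \(M'\to\infty\) at fixed \(N\) yields \(\sup_{s,t}\|U_N(t,s)P_Nu-U(t,s)u\|\leq C_{I,\mu,\sigma}(1+N)^{-\sigma}\|u\|_{\mu+\sigma}\). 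This uses the uniform stability at level \(\mu+\sigma\) from Proposition~\ref{prop:uniform-cutoff-stability}, which is exactly the assumed energy estimate. Summing the two bounds and taking
\[
C_{I,\mu,\sigma,\alpha}=\max\Bigl(C_{I,\mu,\sigma},\tfrac{L_{\alpha,\mu}|I|}{\alpha+1}\Bigr)
\]
gives the stated inequality. Both bounds are uniform in \(s,t\in I\), since the time-slicing estimate holds for any partition of \([s,t]\) and the spectral estimate is a supremum over \(I\times I\).

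The consequence for sequences \((N_k,\Pi_k)\) then follows immediately: the right-hand side tends to \(0\) uniformly in \((t,s)\) under the stated conditions. The hypothesis \(|\Pi_k|\to0\) is in fact implied by the third one.

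The step needing most care is the passage to the limit in the Cauchy estimate. One must check that its constant really does not depend on \(M'\); it does not, since \(C_{\mathrm{stab}}\) is independent of the cut-off. One must also check that the time-sliced and exact cut-off propagators act on the same datum \(P_Nu\in\Hilb_N\), so that no additional projection error arises. The Hölder constant \(L_{\alpha,\mu}\) only enters through Lemma~\ref{lem:cutoff-holder-estimate}, which is where the loss \((1+N)^\mu\) originates.
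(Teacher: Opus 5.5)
Your proposal is correct and follows the paper's proof: you split through $U_N(t,s)P_Nu$, bound the time-slicing term by Proposition~\ref{prop:uniform-time-slicing-error}, and bound the spectral term by the Cauchy estimate of Theorem~\ref{thm:construction-by-spectral-cutoff} after letting the larger cut-off tend to infinity. Your explicit checks, that $\|P_Nu\|\leq\|u\|_{\mu+\sigma}$ and that the Cauchy constant does not depend on the larger cut-off, make precise what the paper leaves implicit.
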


\begin{proof}
Decompose
\[
\begin{aligned}
 U_{N,\Pi}(t,s)P_Nu-U(t,s)u
 ={}&
 (U_{N,\Pi}(t,s)-U_N(t,s))P_Nu \\
 &+U_N(t,s)P_Nu-U(t,s)u.
\end{aligned}
\]
Proposition~\ref{prop:uniform-time-slicing-error} gives
\[
 \|(U_{N,\Pi}(t,s)-U_N(t,s))P_Nu\|
 \leq
 \frac{L_{\alpha,\mu}|I|}{\alpha+1}
 (1+N)^\mu|\Pi|^\alpha\|u\|.
\]
The quantitative spectral estimate obtained in the proof of
Theorem~\ref{thm:construction-by-spectral-cutoff} gives
\[
 \sup_{s,t\in I}
 \|U_N(t,s)P_Nu-U(t,s)u\|
 \leq
 C_{I,\mu,\sigma}(1+N)^{-\sigma}\|u\|_{\mu+\sigma}.
\]
Combining the two inequalities proves the estimate and the convergence
statement.
\end{proof}

\begin{corollary}[Uniform time partitions]
\label{cor:uniform-time-partitions}
For a uniform partition of \([s,t]\) into \(M\) subintervals,
\[
\begin{aligned}
 &\|U_{N,M}(t,s)P_Nu-U(t,s)u\| \\
 &\qquad\leq
 C_{I,\mu,\sigma,\alpha}
 \left((1+N)^{-\sigma}+(1+N)^\mu M^{-\alpha}\right)
 \|u\|_{\mu+\sigma}.
\end{aligned}
\]
In particular, if
\[
 \frac{M(N)}{(1+N)^{\mu/\alpha}}\longrightarrow+\infty,
\]
then
\[
 U_{N,M(N)}(t,s)P_Nu\longrightarrow U(t,s)u
\]
uniformly for \((t,s)\in I\times I\).
\end{corollary}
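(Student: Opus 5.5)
This corollary is a direct specialization of Theorem~\ref{thm:joint-spectral-time-slicing}, so I would simply substitute the uniform partition into that estimate. For the uniform partition $\Pi_M$ of $[s,t]$ into $M$ subintervals, the mesh is
\[
|\Pi_M|=\frac{|t-s|}{M}\leq\frac{|I|}{M}.
\]
Hence
\[
(1+N)^\mu|\Pi_M|^\alpha\leq |I|^\alpha (1+N)^\mu M^{-\alpha}.
\]
Inserting this into the bound of Theorem~\ref{thm:joint-spectral-time-slicing} yields the displayed inequality. The constant $|I|^\alpha$ is absorbed into a possibly enlarged $C_{I,\mu,\sigma,\alpha}$, which still depends only on $I,\mu,\sigma,\alpha$ and not on $N$, $M$, $s$ or $t$.

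A small care point is that $U_{N,M}(t,s)$ is defined through a partition of $[s,t]$ that depends on $(s,t)$. Uniformity in $(s,t)$ is nevertheless harmless. Both ingredients of Theorem~\ref{thm:joint-spectral-time-slicing} are uniform in the endpoints: the time-slicing part from Proposition~\ref{prop:uniform-time-slicing-error} is bounded by $|t-s||\Pi|^\alpha$, and the spectral part from Theorem~\ref{thm:construction-by-spectral-cutoff} is a supremum over $I\times I$. The only thing to verify is that $|\Pi_M|\le |I|/M$ holds uniformly, which it does. If $t<s$, the same argument is applied on $[t,s]$, as in that proposition.

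For the convergence statement, I would set $M=M(N)$. The hypothesis $M(N)/(1+N)^{\mu/\alpha}\to\infty$ is equivalent, after raising to the power $\alpha>0$, to
\[
(1+N)^\mu M(N)^{-\alpha}\to 0.
\]
Also $(1+N)^{-\sigma}\to0$ because $\sigma>0$. The right-hand side of the inequality therefore tends to zero, uniformly in $(s,t)\in I\times I$, for each fixed $u\in\Hilb^{\mu+\sigma}$.

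I do not expect a real obstacle here. The only point deserving a word is that the convergence claim is made for $u\in\Hilb^{\mu+\sigma}$, where the rate applies. If convergence for all $u\in\Hilb$ were intended, it would follow by density of $\Hilb^{\mu+\sigma}$ together with the uniform bound $\|U_{N,M}(t,s)P_N\|\le1$ and the unitarity of $U(t,s)$, via a standard $\varepsilon/3$ argument.
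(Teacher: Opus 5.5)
Your argument is correct and is essentially the paper's: the paper gives no separate proof, treating the corollary as an immediate specialization of Theorem~\ref{thm:joint-spectral-time-slicing} with $|\Pi_M|=|t-s|/M\le |I|/M$ and the factor $|I|^\alpha$ absorbed into the constant. The convergence claim then follows, as you note, because $M(N)/(1+N)^{\mu/\alpha}\to\infty$ is equivalent to $(1+N)^\mu M(N)^{-\alpha}\to0$, and your density remark extending this to all $u\in\Hilb$ is a correct addition.
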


\begin{corollary}[Balanced choice of the time-slicing parameter]
\label{cor:balanced-time-slicing}
If
\[
 M(N)=
 \left\lceil(1+N)^{(\mu+\sigma)/\alpha}\right\rceil,
\]
then
\[
 \sup_{s,t\in I}
 \|U_{N,M(N)}(t,s)P_Nu-U(t,s)u\|
 \leq
 C_{I,\mu,\sigma,\alpha}(1+N)^{-\sigma}
 \|u\|_{\mu+\sigma}.
\]
\end{corollary}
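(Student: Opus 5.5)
This follows by specializing Corollary~\ref{cor:uniform-time-partitions} to the given choice of \(M(N)\). Concretely, I would take the uniform partition of \([s,t]\) into \(M=M(N)\) subintervals and invoke the estimate of Corollary~\ref{cor:uniform-time-partitions}:
\[
\|U_{N,M}(t,s)P_Nu-U(t,s)u\|
\leq
C_{I,\mu,\sigma,\alpha}\left((1+N)^{-\sigma}+(1+N)^\mu M^{-\alpha}\right)\|u\|_{\mu+\sigma}.
\]
It then remains to show that the time-slicing term is of the same order as the spectral term.

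For the uniform partition, the mesh is \(|t-s|/M\leq |I|/M\). Strictly, Theorem~\ref{thm:joint-spectral-time-slicing} gives a bound in terms of \(|\Pi|^\alpha\), so the factor \(|I|^\alpha\) is absorbed into the constant; this is how Corollary~\ref{cor:uniform-time-partitions} is obtained. With \(M(N)=\lceil(1+N)^{(\mu+\sigma)/\alpha}\rceil\), we have \(M(N)\geq(1+N)^{(\mu+\sigma)/\alpha}\), and since \(\alpha>0\),
\[
M(N)^{-\alpha}\leq(1+N)^{-(\mu+\sigma)}.
\]
Hence \((1+N)^\mu M(N)^{-\alpha}\leq(1+N)^{-\sigma}\). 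The bracket is therefore at most \(2(1+N)^{-\sigma}\), and replacing \(C_{I,\mu,\sigma,\alpha}\) by twice its value gives the claim.

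Uniformity in \((s,t)\in I\times I\) is inherited from Theorem~\ref{thm:joint-spectral-time-slicing}. Its constant is independent of \(N\), \(\Pi\), \(s\) and \(t\): the time-slicing part carries the factor \(|I|\), and the spectral part is the uniform bound from Theorem~\ref{thm:construction-by-spectral-cutoff}. Taking the supremum over \(s,t\) is therefore legitimate.

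There is no genuine obstacle here. The only point requiring care is bookkeeping of constants: the \(|I|^\alpha\) coming from the mesh and the factor \(2\) must be absorbed into a constant still independent of \(N\). The degenerate case \(s=t\) is trivial, since both propagators reduce to the identity on the relevant spaces and the error is \(\|(I-P_N)u\|\leq(1+N)^{-(\mu+\sigma)}\|u\|_{\mu+\sigma}\) by Lemma~\ref{lem:quantitative-spectral-tail}.
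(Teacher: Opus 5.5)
Your proposal is correct and follows the paper's proof: the paper also notes that this choice of \(M(N)\) gives \((1+N)^\mu M(N)^{-\alpha}\leq(1+N)^{-\sigma}\) and then invokes Corollary~\ref{cor:uniform-time-partitions}. Your extra bookkeeping is harmless and only makes explicit what the paper leaves implicit: absorbing \(|I|^{\alpha}\) and the factor \(2\) into the constant, and handling the trivial case \(s=t\).
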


\begin{proof}
The choice of \(M(N)\) gives
\[
 (1+N)^\mu M(N)^{-\alpha}
 \leq (1+N)^{-\sigma}.
\]
The conclusion follows from
Corollary~\ref{cor:uniform-time-partitions}.
\end{proof}

\subsection{Joint convergence of state-sum and oscillatory amplitudes}
\label{subsec:joint-convergence-amplitudes}

The preceding theorem applies directly to the finite-dimensional state-sum
representation introduced in
Definition~\ref{def:spectral-cutoff-state-sum}.

\begin{corollary}[Joint convergence of the state-sum construction]
\label{cor:joint-state-sum-convergence}
Let \(u\in\Hilb^{\mu+\sigma}\), and let \(M(N)\) satisfy
\[
 (1+N)^\mu M(N)^{-\alpha}\longrightarrow0.
\]
Then the state-sum representation of
\(U_{N,M(N)}(t,s)P_Nu\) converges in \(\Hilb\), uniformly for
\((t,s)\in I\times I\), to \(U(t,s)u\).
\end{corollary}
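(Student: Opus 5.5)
The plan is to reduce the statement to Corollary~\ref{cor:uniform-time-partitions}, using the exact identity between the state-sum and the time-sliced propagator established in Subsection~\ref{subsec:finite-dimensional-state-sum}.

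\textbf{Step 1: the state-sum is an exact rewriting.} Fix \(N\) and the uniform partition of \([s,t]\) into \(M=M(N)\) subintervals. By Definition~\ref{def:spectral-cutoff-state-sum} and the subsequent expansion formula,
\[
\sum_{a_M,a_0=1}^{d_N}\mathcal{A}_{N,M}(a_M,a_0;t,s)\,\langle e_{N,a_0},u\rangle\, e_{N,a_M}
=U_{N,M}(t,s)P_Nu .
\]
This holds exactly for every \(N\) and \(M\), because it only uses the insertion of the finite resolution of the identity on \(\Hilb_N\). Remark~\ref{rem:basis-dependence-state-sum} shows that the choice of the basis \(\mathcal B_N\) does not affect the left-hand side as a vector. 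The basis may therefore depend on \(N\) arbitrarily, and convergence of the state-sum representation means nothing more than convergence of \(U_{N,M(N)}(t,s)P_Nu\).

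\textbf{Step 2: apply the joint estimate.} Corollary~\ref{cor:uniform-time-partitions} gives, uniformly in \(s,t\in I\),
\[
\|U_{N,M(N)}(t,s)P_Nu-U(t,s)u\|\le C_{I,\mu,\sigma,\alpha}\bigl((1+N)^{-\sigma}+(1+N)^\mu M(N)^{-\alpha}\bigr)\|u\|_{\mu+\sigma}.
\]
Since \(\sigma>0\), the first term on the right tends to zero. The second term tends to zero by the hypothesis on \(M(N)\). This proves the claimed convergence, uniformly on \(I\times I\).

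\textbf{Main obstacle: matching hypotheses.} The only delicate point is checking that the hypotheses line up. The corollary implicitly assumes the setting of Theorem~\ref{thm:joint-spectral-time-slicing}: the assumptions of Theorem~\ref{thm:construction-by-spectral-cutoff} together with Hölder continuity of exponent \(\alpha\) in \(\mathcal L(\Hilb^\mu,\Hilb)\). These are what make \(C_{I,\mu,\sigma,\alpha}\) independent of \(N\) and \(M\).

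One further detail concerns the uniform partition. Its mesh \(|t-s|/M\) depends on \((t,s)\), but it is bounded by \(|I|/M\). So \(|\Pi|^\alpha\le |I|^\alpha M^{-\alpha}\), and the extra factor \(|I|^\alpha\) can be absorbed into the constant. This is what gives uniformity in \((t,s)\).
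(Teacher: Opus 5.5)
Your proposal is correct and follows the paper's own (implicit) argument: the state-sum is an exact rewriting of \(U_{N,M(N)}(t,s)P_Nu\), so the claim follows directly from Corollary~\ref{cor:uniform-time-partitions}, with the hypotheses of Theorem~\ref{thm:joint-spectral-time-slicing} implicitly in force. Your remarks on basis independence, and on absorbing \(|I|^\alpha\) into the constant to get uniformity in \((t,s)\), are accurate.
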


When the one-step kernels possess oscillatory realizations in the sense of
Definition~\ref{def:oscillatory-one-step-kernel}, the same conclusion holds
for the corresponding oscillatory amplitudes.

\begin{corollary}[Joint convergence of oscillatory realizations]
\label{cor:joint-oscillatory-convergence}
Assume that each time-sliced cut-off propagator admits an oscillatory
realization
\[
 I_{N,M}(t,s)=U_{N,M}(t,s)
\]
as an operator. If
\[
 (1+N)^\mu M(N)^{-\alpha}\longrightarrow0,
\]
then
\[
 I_{N,M(N)}(t,s)P_Nu
 \longrightarrow U(t,s)u
\]
in \(\Hilb\), uniformly for \((t,s)\in I\times I\).
For the balanced choice
\[
 M(N)=\left\lceil(1+N)^{(\mu+\sigma)/\alpha}\right\rceil,
\]
one has
\[
 \sup_{s,t\in I}
 \|I_{N,M(N)}(t,s)P_Nu-U(t,s)u\|
 \leq
 C_{I,\mu,\sigma,\alpha}(1+N)^{-\sigma}
 \|u\|_{\mu+\sigma}.
\]
\end{corollary}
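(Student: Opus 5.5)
The claim follows by reducing to the time-sliced operators and then invoking the results already established for them. The key identification is the hypothesis
\[
 I_{N,M}(t,s)=U_{N,M}(t,s)
\]
as operators on \(\Hilb_N\), which is how the notation \(I_{N,M}\) was introduced in Definition~\ref{def:spectral-cutoff-oscillatory-amplitude}. Because of it, \(I_{N,M(N)}(t,s)P_Nu=U_{N,M(N)}(t,s)P_Nu\) for every \(u\in\Hilb\), every \(N\) and every \((t,s)\in I\times I\). So both assertions are statements about the time-sliced cut-off propagators alone. No property of the phases \(\Phi_{N,j}\) or the amplitudes \(a_{N,j}\) is needed beyond the existence of the realization.

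\textbf{Convergence statement.} I would apply Corollary~\ref{cor:uniform-time-partitions}, or equivalently Theorem~\ref{thm:joint-spectral-time-slicing} with the uniform partition of mesh \(|t-s|/M\leq |I|/M\). This gives, uniformly in \((t,s)\in I\times I\),
\[
 \|U_{N,M(N)}(t,s)P_Nu-U(t,s)u\|
 \leq
 C_{I,\mu,\sigma,\alpha}\bigl((1+N)^{-\sigma}+(1+N)^\mu M(N)^{-\alpha}\bigr)\|u\|_{\mu+\sigma}.
\]
For \(u\in\Hilb^{\mu+\sigma}\), both terms tend to zero under the hypothesis \((1+N)^\mu M(N)^{-\alpha}\to0\).

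If convergence is wanted for all \(u\in\Hilb\), I would add a density argument. The operators \(U_{N,M}(t,s)P_N\) are contractions, being products of unitaries on \(\Hilb_N\) composed with a projection, and \(U(t,s)\) is unitary. Given \(u\in\Hilb\) and \(\varepsilon>0\), I approximate \(u\) within \(\varepsilon\) by some \(v\in\Hilb^{\mu+\sigma}\). The error for \(u\) is then at most \(2\varepsilon\) plus the error for \(v\), uniformly in \((t,s)\).

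\textbf{Balanced choice.} I would quote Corollary~\ref{cor:balanced-time-slicing} directly. The only computation is that \(M(N)\geq(1+N)^{(\mu+\sigma)/\alpha}\), which gives \((1+N)^\mu M(N)^{-\alpha}\leq(1+N)^{-\sigma}\); the constant doubles and is renamed.

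\textbf{Main obstacle.} The one delicate point is hypothesis bookkeeping. The statement implicitly assumes the hypotheses of Theorem~\ref{thm:joint-spectral-time-slicing}: the constructive assumptions of Theorem~\ref{thm:construction-by-spectral-cutoff}, and Hölder continuity of exponent \(\alpha\) of \(t\mapsto H(t)\in\mathcal L(\Hilb^\mu,\Hilb)\). I would make this explicit. I would also note that the oscillatory identity is used only at the operator level, so no uniformity in \(N\) of the oscillatory representation is required.
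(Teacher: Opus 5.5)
Your proposal is correct and matches the paper's argument, which is left implicit: the operator identity $I_{N,M}=U_{N,M}$ reduces both claims to Corollary~\ref{cor:uniform-time-partitions} and Corollary~\ref{cor:balanced-time-slicing}, just as the proof of Corollary~\ref{cor:oscillatory-construction} does in the iterated setting. Your density extension to all $u\in\Hilb$ and your explicit listing of the H\"older and constructive hypotheses go slightly beyond what the paper states, and both are harmless.
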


\begin{remark}
The joint approximation replaces the iterated prescription
\[
 \lim_{N\to\infty}\lim_{M\to\infty}
 I_{N,M}(t,s)P_Nu
\]
by the genuine one-parameter limit
\[
 \lim_{N\to\infty}I_{N,M(N)}(t,s)P_Nu.
\]
The admissible growth of \(M(N)\) is determined explicitly by the derivative
loss \(\mu\), the temporal Hölder exponent \(\alpha\), and the additional
regularity \(\sigma\).
\end{remark}

\section{Periodic cut-offs and effective Hamiltonians}
\label{sec:periodic-effective-hamiltonians}

We now assume that the Hamiltonian is periodic in time.  The purpose of
this section is to show how the spectral cut-off construction interacts
with finite-dimensional Floquet theory and with the Floquet--Magnus
effective Hamiltonians.

The main point is the following.  For each fixed spectral cut-off \(N\),
the Hamiltonian
\[
        H_N(t)=P_NH(t)P_N
\]
acts on a finite-dimensional Hilbert space.  Therefore the standard
Floquet--Magnus expansion is available at this level.  One can then ask
whether the finite-dimensional effective Hamiltonians converge, as
\(N\to\infty\), to a formal effective Hamiltonian associated with the
unbounded dynamics.  We prove such a convergence at the level of
finite-order Floquet--Magnus coefficients on the \(H_0\)-smooth scale.

\subsection{Periodic rescaling}

Let \(H(t)\), \(t\in\mathbb R\), be a \(1\)-periodic family of symmetric
operators:
\[
        H(t+1)=H(t).
\]
For \(T>0\), we define the corresponding \(T\)-periodic Hamiltonian by
\[
        H^{(T)}(t)=H(t/T).
\]
The associated Schr\"odinger equation is
\[
        \ii \partial_t u(t)=H^{(T)}(t)u(t).
\]
The parameter \(T\) is the driving period.  The high-frequency regime is
the limit
\[
        T\to0^+.
\]

For each spectral cut-off \(N\), we define
\[
        H_N(t)=P_NH(t)P_N
\]
on
\[
        \Hilb_N=P_N\Hilb,
\]
and similarly
\[
        H_N^{(T)}(t)=P_NH^{(T)}(t)P_N
        =
        H_N(t/T).
\]
Let
\[
        U_N^{(T)}(t,s)
\]
be the finite-dimensional unitary propagator generated by
\(H_N^{(T)}(t)\).

Since \(H_N^{(T)}\) is \(T\)-periodic, the dynamics over one period is
encoded by the monodromy matrix
\[
        U_N^{(T)}(T,0).
\]

\subsection{Finite-dimensional Floquet Hamiltonians}

For fixed \(N\), Floquet theory applies to \(U_N^{(T)}(T,0)\).  Since
\(U_N^{(T)}(T,0)\) is a unitary matrix on \(\Hilb_N\), one may choose a
self-adjoint matrix \(H_{F,N}^{(T)}\) such that
\[
        U_N^{(T)}(T,0)
        =
        \ee^{-\ii T H_{F,N}^{(T)}}.
\]
The operator \(H_{F,N}^{(T)}\) is called a Floquet Hamiltonian.  It is
not unique, because the logarithm of a unitary matrix is not unique.

In the high-frequency regime, one seeks an asymptotic expansion of
\(H_{F,N}^{(T)}\) in powers of \(T\).  The Floquet--Magnus expansion
provides such an expansion:
\[
        H_{\mathrm{FM},N}^{(T)}
        \sim
        \sum_{\ell\geq0}T^\ell H_{\mathrm{FM},N}^{[\ell]}.
\]
The first two coefficients are
\[
        H_{\mathrm{FM},N}^{[0]}
        =
        \int_0^1 H_N(\tau)\dd\tau,
\]
and, with the convention used here,
\[
        H_{\mathrm{FM},N}^{[1]}
        =
        \frac{\ii}{2}
        \int_0^1
        \int_0^{\tau_1}
        [H_N(\tau_1),H_N(\tau_2)]
        \dd\tau_2\,\dd\tau_1.
\]
Higher coefficients are finite linear combinations of iterated
integrals of nested commutators of \(H_N(\tau)\) at different times.

For \(L\geq0\), we write
\[
        H_{\mathrm{FM},L,N}^{(T)}
        =
        \sum_{\ell=0}^{L}
        T^\ell H_{\mathrm{FM},N}^{[\ell]}.
\]
This is the cut-off Floquet--Magnus effective Hamiltonian of order
\(L\).

\begin{remark}
For fixed \(N\), all operators are bounded matrices.  Thus the
Floquet--Magnus coefficients are unambiguously defined.  However, the
operator norms of \(H_N(t)\) may grow with \(N\).  Consequently, the
convergence radius of the full Magnus series is not expected to be
uniform in \(N\).  In this paper we only use finite-order coefficients
and their convergence on smooth vectors.
\end{remark}

\subsection{Regularity assumptions for the coefficients}

We now introduce assumptions under which the cut-off
Floquet--Magnus coefficients converge as \(N\to\infty\).

Let \(\mu\geq0\).  We assume that, for every \(r\geq0\), the maps
\[
        H(t):\Hilb^{r+\mu}\longrightarrow \Hilb^r
\]
are bounded and depend continuously on \(t\in[0,1]\) in the operator
norm of
\[
        \mathcal L(\Hilb^{r+\mu},\Hilb^r).
\]
Equivalently, for every \(r\geq0\), there exists \(C_r>0\) such that
\[
        \|H(t)u\|_r
        \leq
        C_r\|u\|_{r+\mu},
        \qquad
        t\in[0,1].
\]
This means that \(H(t)\) loses at most \(\mu\) derivatives with respect
to the \(H_0\)-scale.

For a fixed order \(L\), the coefficient
\(H_{\mathrm{FM}}^{[\ell]}\) involves products of at most
\(\ell+1\) Hamiltonians.  Therefore it is natural to work on
\[
        \Hilb^{r+(L+1)\mu}.
\]

\subsection{Convergence of cut-off words}

We first prove a basic approximation lemma for products of Hamiltonians
with spectral projections inserted between the factors.

For \(m\geq1\), define the exact word
\[
        W_m(t_m,\ldots,t_1)
        =
        H(t_m)H(t_{m-1})\cdots H(t_1),
\]
initially as a bounded operator
\[
        W_m(t_m,\ldots,t_1):
        \Hilb^{r+m\mu}\longrightarrow \Hilb^r.
\]
Similarly, define the cut-off word
\[
        W_{m,N}(t_m,\ldots,t_1)
        =
        P_NH(t_m)P_NH(t_{m-1})P_N\cdots P_NH(t_1)P_N.
\]

\begin{lemma}[Convergence of cut-off words]
\label{lem:word-convergence}
Let \(m\geq0\), \(r\geq0\), and \(u\in \Hilb^{r+m\mu}\).  Then
\[
        W_{m,N}(t_m,\ldots,t_1)u
        \longrightarrow
        W_m(t_m,\ldots,t_1)u
\]
in \(\Hilb^r\), uniformly for
\[
        (t_1,\ldots,t_m)\in[0,1]^m.
\]
For \(m=0\), this statement means simply
\[
        P_Nu\to u
        \qquad\text{in }\Hilb^r.
\]
\end{lemma}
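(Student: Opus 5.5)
We argue by induction on \(m\). The case \(m=0\) is the strong convergence \(P_Nu\to u\) in \(\Hilb^r\) recalled at the beginning of this section. For \(m\geq1\), we write \(W_{m,N}=P_NH(t_m)W_{m-1,N}(t_{m-1},\ldots,t_1)\), where \(W_{m-1,N}\) ends with the projection \(P_N\), and similarly \(W_m=H(t_m)W_{m-1}\). Then we split
\[
W_{m,N}u-W_mu
=
P_NH(t_m)\bigl(W_{m-1,N}-W_{m-1}\bigr)u
+
(P_N-I)H(t_m)W_{m-1}u,
\]
with the words evaluated at \((t_{m-1},\ldots,t_1)\).

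For the first term we use that \(P_N\) is a contraction on \(\Hilb^r\) and that \(\|H(t)v\|_r\leq C_r\|v\|_{r+\mu}\) uniformly in \(t\in[0,1]\). This bounds the first term by
\[
C_r\,\|(W_{m-1,N}-W_{m-1})u\|_{r+\mu}.
\]
Since \(u\in\Hilb^{(r+\mu)+(m-1)\mu}\), the induction hypothesis applied at level \(r+\mu\) shows that this quantity tends to zero uniformly in \((t_1,\ldots,t_{m-1})\). The induction must therefore be set up for all \(r\geq0\) simultaneously.

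The second term is where the uniformity in time needs an argument, and we expect it to be the main obstacle. We handle it with Lemma~\ref{lem:uniform-tail}. We claim that the set
\[
K=\{H(t_m)W_{m-1}(t_{m-1},\ldots,t_1)u : (t_1,\ldots,t_m)\in[0,1]^m\}
\]
is compact in \(\Hilb^r\), because the map \((t_1,\ldots,t_m)\mapsto W_m(t_m,\ldots,t_1)u\in\Hilb^r\) is continuous on the compact cube. To check continuity, we telescope:
\[
H(t_m')\cdots H(t_1')u-H(t_m)\cdots H(t_1)u
=
\sum_{k=1}^{m}
H(t_m')\cdots H(t_{k+1}')\bigl(H(t_k')-H(t_k)\bigr)H(t_{k-1})\cdots H(t_1)u.
\]
In the \(k\)-th summand, the vector \(H(t_{k-1})\cdots H(t_1)u\) is bounded in \(\Hilb^{r+(m-k+1)\mu}\) uniformly in the times. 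The difference \(H(t_k')-H(t_k)\) is small in \(\mathcal L(\Hilb^{r+(m-k+1)\mu},\Hilb^{r+(m-k)\mu})\) by the assumed norm continuity. The remaining factors \(H(t_m')\cdots H(t_{k+1}')\) are uniformly bounded from \(\Hilb^{r+(m-k)\mu}\) to \(\Hilb^r\). Hence each summand tends to zero, which gives continuity and hence compactness of \(K\).

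Lemma~\ref{lem:uniform-tail} then gives \(\sup_{v\in K}\|(I-P_N)v\|_r\to0\), so the second term tends to zero uniformly in \((t_1,\ldots,t_m)\). Combining the two terms completes the induction step.
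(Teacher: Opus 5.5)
Your proof is correct and follows the paper's route: induction on \(m\) for all levels \(r\) at once, with the induction hypothesis applied in \(\Hilb^{r+\mu}\) and Lemma~\ref{lem:uniform-tail} applied to the compact set of exact words. Your two-term split merges the paper's first two terms, so you need compactness only in \(\Hilb^r\), and your telescoping argument supplies the joint continuity in the times that the paper only asserts.
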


\begin{proof}
We prove the result by induction on \(m\).

For \(m=0\), the claim is precisely the strong convergence
\[
        P_Nu\to u
\]
in \(\Hilb^r\).

Assume that the result is true for words of length \(m-1\).  Let
\(u\in\Hilb^{r+m\mu}\).  We write
\[
        W_m(t_m,\ldots,t_1)
        =
        H(t_m)W_{m-1}(t_{m-1},\ldots,t_1),
\]
and
\[
        W_{m,N}(t_m,\ldots,t_1)
        =
        P_NH(t_m)P_NW_{m-1,N}(t_{m-1},\ldots,t_1).
\]
We estimate
\[
\begin{aligned}
& W_{m,N}(t_m,\ldots,t_1)u
  -
  W_m(t_m,\ldots,t_1)u                                      \\[1mm]
& =
  P_NH(t_m)P_N
  \Bigl(
        W_{m-1,N}(t_{m-1},\ldots,t_1)u
        -
        W_{m-1}(t_{m-1},\ldots,t_1)u
  \Bigr)                                                     \\
&\quad
  +
  P_NH(t_m)(P_N-I)
  W_{m-1}(t_{m-1},\ldots,t_1)u                               \\
&\quad
  +
  (P_N-I)H(t_m)
  W_{m-1}(t_{m-1},\ldots,t_1)u .
\end{aligned}
\]
We show that the three terms tend to zero in \(\Hilb^r\), uniformly in
the time variables.

For the first term, since \(P_N\) is a contraction on every
\(\Hilb^r\), and since
\[
        H(t_m):\Hilb^{r+\mu}\to\Hilb^r
\]
is uniformly bounded for \(t_m\in[0,1]\), there exists \(C_r>0\) such
that
\[
\begin{aligned}
&\left\|
P_NH(t_m)P_N
\Bigl(
W_{m-1,N}u-W_{m-1}u
\Bigr)
\right\|_r                                             \\
&\qquad\leq
C_r
\left\|
W_{m-1,N}(t_{m-1},\ldots,t_1)u
-
W_{m-1}(t_{m-1},\ldots,t_1)u
\right\|_{r+\mu}.
\end{aligned}
\]
The induction hypothesis applies in the space \(\Hilb^{r+\mu}\), since
\[
        u\in\Hilb^{r+m\mu}
        =
        \Hilb^{(r+\mu)+(m-1)\mu}.
\]
Thus the first term tends to zero uniformly.

For the second term, consider the set
\[
        K
        =
        \left\{
        W_{m-1}(t_{m-1},\ldots,t_1)u;
        (t_1,\ldots,t_{m-1})\in[0,1]^{m-1}
        \right\}.
\]
By the norm-continuity of the maps \(t\mapsto H(t)\), this set is
compact in \(\Hilb^{r+\mu}\).  By Lemma~\ref{lem:uniform-tail},
\[
        \sup_{v\in K}\|(P_N-I)v\|_{r+\mu}
        \longrightarrow0.
\]
Using again the uniform boundedness of
\[
        H(t_m):\Hilb^{r+\mu}\to\Hilb^r,
\]
we obtain
\[
        \sup_{t_1,\ldots,t_m}
        \left\|
        P_NH(t_m)(P_N-I)
        W_{m-1}(t_{m-1},\ldots,t_1)u
        \right\|_r
        \longrightarrow0.
\]

For the third term, the set
\[
        K'
        =
        \left\{
        H(t_m)W_{m-1}(t_{m-1},\ldots,t_1)u;
        (t_1,\ldots,t_m)\in[0,1]^m
        \right\}
\]
is compact in \(\Hilb^r\).  Again by Lemma~\ref{lem:uniform-tail},
\[
        \sup_{v\in K'}\|(P_N-I)v\|_r
        \longrightarrow0.
\]
Therefore the third term also tends to zero uniformly.

Combining the three estimates proves the induction step, and hence the
lemma.
\end{proof}

\subsection{Formal unbounded Floquet--Magnus coefficients}

We now define the formal coefficients associated with the unbounded
Hamiltonian \(H(t)\).  For each \(\ell\geq0\), the
\(\ell\)-th Floquet--Magnus coefficient is a universal Lie polynomial
in the family \(H(t)\), involving iterated integrals of nested
commutators of length \(\ell+1\).  We denote it by
\[
        H_{\mathrm{FM}}^{[\ell]}.
\]
Thus
\[
        H_{\mathrm{FM}}^{[0]}
        =
        \int_0^1 H(\tau)\dd\tau,
\]
and
\[
        H_{\mathrm{FM}}^{[1]}
        =
        \frac{\ii}{2}
        \int_0^1
        \int_0^{\tau_1}
        [H(\tau_1),H(\tau_2)]
        \dd\tau_2\,\dd\tau_1.
\]
The higher coefficients are defined similarly as iterated integrals of
nested commutators.

Since products of \(\ell+1\) Hamiltonians lose at most
\((\ell+1)\mu\) derivatives, the coefficient
\(H_{\mathrm{FM}}^{[\ell]}\) is well defined as a bounded operator
\[
        H_{\mathrm{FM}}^{[\ell]}:
        \Hilb^{r+(\ell+1)\mu}\longrightarrow \Hilb^r.
\]
For fixed \(L\), we define the formal effective Hamiltonian of order
\(L\) by
\[
        H_{\mathrm{FM},L}^{(T)}
        =
        \sum_{\ell=0}^{L}
        T^\ell H_{\mathrm{FM}}^{[\ell]}.
\]
It is therefore defined on
\[
        \Hilb^{r+(L+1)\mu}
\]
as an operator into \(\Hilb^r\).

\begin{remark}
At this stage, \(H_{\mathrm{FM},L}^{(T)}\) is only a formal finite-order
operator on the \(H_0\)-scale.  Questions of symmetry, self-adjointness,
and generation of a unitary group require additional hypotheses.  The
purpose of this section is only to establish convergence of the cut-off
coefficients toward these formal coefficients.
\end{remark}

\subsection{Convergence of the Floquet--Magnus coefficients}

We now prove that the finite-dimensional coefficients converge to the
formal unbounded coefficients.

\begin{proposition}[Coefficient convergence]
\label{prop:FM-coefficient-convergence}
Let \(\ell\geq0\), \(r\geq0\), and
\(u\in\Hilb^{r+(\ell+1)\mu}\).  Then
\[
        H_{\mathrm{FM},N}^{[\ell]}P_Nu
        \longrightarrow
        H_{\mathrm{FM}}^{[\ell]}u
\]
in \(\Hilb^r\) as \(N\to\infty\).
\end{proposition}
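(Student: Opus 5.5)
The plan is to reduce the statement to Lemma~\ref{lem:word-convergence} by expanding both coefficients into words. By the universal Lie-polynomial structure of the Floquet--Magnus expansion, there is a finite family of real or complex coefficients and simplices such that
\[
H_{\mathrm{FM}}^{[\ell]}u=\sum_{k}c_k\int_{\Delta_k}W_{\ell+1}(\tau_{\pi_k(\ell+1)},\ldots,\tau_{\pi_k(1)})u\,d\tau,
\]
where \(\Delta_k\subset[0,1]^{\ell+1}\) are simplices (or the cube) and \(\pi_k\) are permutations. This follows from expanding each nested commutator into a signed sum of products of \(\ell+1\) Hamiltonians. The same universal formula, applied to the matrices \(H_N(\tau)\), gives \(H_{\mathrm{FM},N}^{[\ell]}\). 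The key observation is that a product \(H_N(\tau_{\ell+1})\cdots H_N(\tau_1)P_N\) equals the cut-off word \(W_{\ell+1,N}(\tau_{\ell+1},\ldots,\tau_1)\), because \(P_N^2=P_N\). Hence
\[
H_{\mathrm{FM},N}^{[\ell]}P_Nu=\sum_k c_k\int_{\Delta_k}W_{\ell+1,N}(\tau_{\pi_k(\ell+1)},\ldots,\tau_{\pi_k(1)})u\,d\tau,
\]
with exactly the same coefficients and domains of integration. For \(\ell=0\) this is simply \(\int_0^1 P_NH(\tau)P_Nu\,d\tau\).

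Next I would check that the integrals make sense as Bochner (or Riemann) integrals in \(\Hilb^r\). For \(u\in\Hilb^{r+(\ell+1)\mu}\), the map \((\tau_1,\ldots,\tau_{\ell+1})\mapsto W_{\ell+1}(\ldots)u\) is continuous into \(\Hilb^r\). This follows from the norm-continuity of \(t\mapsto H(t)\in\mathcal L(\Hilb^{r'+\mu},\Hilb^{r'})\) at every level \(r'\), by a telescoping argument. The cut-off words are likewise continuous, since \(P_N\) is a contraction on each \(\Hilb^{r'}\). So all integrals are integrals of continuous functions over compact sets.

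Finally, Lemma~\ref{lem:word-convergence} with \(m=\ell+1\) gives
\[
\sup_{\tau\in[0,1]^{\ell+1}}\|W_{\ell+1,N}(\tau)u-W_{\ell+1}(\tau)u\|_r\to0.
\]
Permuting the time arguments does not affect this uniform statement. Therefore
\[
\|H_{\mathrm{FM},N}^{[\ell]}P_Nu-H_{\mathrm{FM}}^{[\ell]}u\|_r\le\sum_k|c_k|\,|\Delta_k|\sup_{\tau}\|W_{\ell+1,N}(\tau)u-W_{\ell+1}(\tau)u\|_r\to0,
\]
which proves the proposition.

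The main obstacle is the bookkeeping in the first step. One must make precise that the finite-dimensional and the formal unbounded coefficients are given by the same universal expression, with identical coefficients \(c_k\) and identical integration domains. One must also check that inserting \(P_N\) between the factors in the cut-off coefficient really produces the cut-off word, and not some other placement of projections. I would handle this by taking the paper's definition of \(H_{\mathrm{FM}}^{[\ell]}\) to be, by fiat, the universal formula applied to \(H(\cdot)\). Since commutators expand to finite sums of ordered products, it suffices to check the claim on a single monomial, where it is immediate.
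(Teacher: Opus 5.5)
Your proposal is correct and follows essentially the same route as the paper. Both arguments expand the nested commutators into finitely many simplex integrals of ordered products, observe that $H_N(\tau_{\ell+1})\cdots H_N(\tau_1)P_N$ is exactly the cut-off word $W_{\ell+1,N}$ because $P_N^2=P_N$, and pass to the limit under the integral using the uniform convergence of Lemma~\ref{lem:word-convergence} with $m=\ell+1$. Your additional check that the integrands are continuous into $\Hilb^r$, so that the integrals are well-defined, is a detail the paper leaves implicit.
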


\begin{proof}
The coefficient \(H_{\mathrm{FM},N}^{[\ell]}\) is a finite linear
combination of iterated integrals of nested commutators involving
\(\ell+1\) factors
\[
        H_N(\tau_1),\ldots,H_N(\tau_{\ell+1}).
\]
Each nested commutator expands into a finite linear combination of
ordered products of these factors.  Therefore it is enough to prove the
claim for a term of the form
\[
        \int_{\Delta}
        H_N(\tau_{\sigma(\ell+1)})
        \cdots
        H_N(\tau_{\sigma(1)})
        P_Nu
        \,d\tau,
\]
where \(\Delta\subset[0,1]^{\ell+1}\) is a simplex and
\(\sigma\) is a permutation.

Since
\[
        H_N(\tau)=P_NH(\tau)P_N,
\]
the integrand is precisely a cut-off word of length \(\ell+1\).  By
Lemma~\ref{lem:word-convergence}, it converges in \(\Hilb^r\),
uniformly for \(\tau\in\Delta\), to
\[
        H(\tau_{\sigma(\ell+1)})
        \cdots
        H(\tau_{\sigma(1)})u.
\]
Uniform convergence allows us to pass to the limit under the integral.
Thus each word contribution converges to its unbounded counterpart.
Since only finitely many word contributions occur in the expansion of
the nested commutators, the whole coefficient converges:
\[
        H_{\mathrm{FM},N}^{[\ell]}P_Nu
        \longrightarrow
        H_{\mathrm{FM}}^{[\ell]}u.
\]
\end{proof}

\begin{corollary}[Convergence of finite-order effective Hamiltonians]
\label{cor:effective-H-convergence}
Let \(L\geq0\), \(r\geq0\), and
\(u\in\Hilb^{r+(L+1)\mu}\).  Then, for every fixed \(T>0\),
\[
        H_{\mathrm{FM},L,N}^{(T)}P_Nu
        \longrightarrow
        H_{\mathrm{FM},L}^{(T)}u
\]
in \(\Hilb^r\) as \(N\to\infty\).
\end{corollary}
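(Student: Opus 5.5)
The corollary follows from Proposition~\ref{prop:FM-coefficient-convergence} by linearity, once we check that each coefficient is evaluated on vectors of sufficient regularity. Since \(T>0\) and \(L\) are fixed, the difference
\[
        H_{\mathrm{FM},L,N}^{(T)}P_Nu-H_{\mathrm{FM},L}^{(T)}u
        =
        \sum_{\ell=0}^{L}
        T^\ell
        \bigl(
        H_{\mathrm{FM},N}^{[\ell]}P_Nu-H_{\mathrm{FM}}^{[\ell]}u
        \bigr)
\]
is a finite sum with fixed scalar weights \(T^\ell\). It therefore suffices to show that each bracket tends to zero in \(\Hilb^r\).

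For the regularity, the Hilbert scale is nested: \(\Hilb^{a}\subset\Hilb^{b}\) continuously for \(a\geq b\), since \(\|v\|_b\leq\|v\|_a\) by the spectral theorem and \(1+H_0\geq 1\). Hence \(u\in\Hilb^{r+(L+1)\mu}\) implies \(u\in\Hilb^{r+(\ell+1)\mu}\) for every \(0\leq\ell\leq L\). This also ensures that \(H_{\mathrm{FM},L}^{(T)}u\) is well defined in \(\Hilb^r\), as noted after the definition of the formal coefficients.

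Proposition~\ref{prop:FM-coefficient-convergence}, applied for each \(\ell\leq L\) at level \(r\), then gives \(H_{\mathrm{FM},N}^{[\ell]}P_Nu\to H_{\mathrm{FM}}^{[\ell]}u\) in \(\Hilb^r\). The triangle inequality yields
\[
        \bigl\|H_{\mathrm{FM},L,N}^{(T)}P_Nu-H_{\mathrm{FM},L}^{(T)}u\bigr\|_r
        \leq
        \sum_{\ell=0}^{L}T^\ell
        \bigl\|H_{\mathrm{FM},N}^{[\ell]}P_Nu-H_{\mathrm{FM}}^{[\ell]}u\bigr\|_r
        \longrightarrow 0.
\]

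Nothing here is a real obstacle. The only points to watch are that the regularity index is monotone in \(\ell\), and that the cut-off coefficients \(H_{\mathrm{FM},N}^{[\ell]}\) are indeed the ones appearing in \(H_{\mathrm{FM},L,N}^{(T)}\), which holds by definition. Convergence is not uniform in \(T\), and none is claimed, because the factors \(T^\ell\) are unbounded as \(T\) grows. On bounded ranges \(T\leq T_0\), however, convergence is uniform, since each weight is then bounded by \(T_0^\ell\).
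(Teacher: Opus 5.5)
Your proposal is correct and follows the paper's proof: expand both effective Hamiltonians as the finite sums $\sum_{\ell=0}^{L}T^\ell(\cdot)$, use the nesting $\Hilb^{r+(L+1)\mu}\subset\Hilb^{r+(\ell+1)\mu}$ to apply Proposition~\ref{prop:FM-coefficient-convergence} to each $\ell\leq L$, and conclude by finiteness of the sum. Your added remark that the convergence is uniform for $T$ in bounded ranges is also valid, since the coefficients do not depend on $T$.
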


\begin{proof}
By definition,
\[
        H_{\mathrm{FM},L,N}^{(T)}
        =
        \sum_{\ell=0}^{L}
        T^\ell H_{\mathrm{FM},N}^{[\ell]},
\]
and
\[
        H_{\mathrm{FM},L}^{(T)}
        =
        \sum_{\ell=0}^{L}
        T^\ell H_{\mathrm{FM}}^{[\ell]}.
\]
For each \(\ell\leq L\), Proposition~\ref{prop:FM-coefficient-convergence}
applies because
\[
        u\in\Hilb^{r+(L+1)\mu}
        \subset
        \Hilb^{r+(\ell+1)\mu}.
\]
Since the sum is finite, the result follows immediately.
\end{proof}

\subsection{Finite-dimensional error estimates at fixed cut-off}

We now recall the standard finite-dimensional meaning of the truncated
Floquet--Magnus expansion.  This result is included to clarify what is
obtained before the cut-off is removed.

For fixed \(N\), set
\[
        B_N=\sup_{\tau\in[0,1]}\|H_N(\tau)\|_{\mathcal L(\Hilb_N)}.
\]
Since \(\Hilb_N\) is finite-dimensional and \(H_N(\tau)\) is continuous,
\(B_N<+\infty\).

\begin{proposition}[Finite-dimensional Magnus estimate]
\label{prop:finite-Magnus-estimate}
Fix \(N\) and \(L\geq0\).  There exist \(T_N>0\) and \(C_{N,L}>0\) such
that, for all \(0<T<T_N\),
\[
        \left\|
        U_N^{(T)}(T,0)
        -
        \exp\left(-\ii T H_{\mathrm{FM},L,N}^{(T)}\right)
        \right\|
        \leq
        C_{N,L}T^{L+2}.
\]
Moreover, for every \(q\in\mathbb Z\),
\[
        \left\|
        U_N^{(T)}(qT,0)
        -
        \exp\left(-\ii qT H_{\mathrm{FM},L,N}^{(T)}\right)
        \right\|
        \leq
        |q|\,C_{N,L}T^{L+2}
\]
whenever \(0<T<T_N\).
\end{proposition}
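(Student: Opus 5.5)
The plan is to rescale time and reduce to the classical finite-dimensional Magnus theorem. Set $V_N(\tau)=U_N^{(T)}(T\tau,0)$ for $\tau\in[0,1]$. Then
\[
\ii\partial_\tau V_N(\tau)=T\,H_N(\tau)V_N(\tau),
\]
so the monodromy matrix $U_N^{(T)}(T,0)=V_N(1)$ is the time-one propagator of the bounded, continuous matrix family $T H_N(\tau)$. The operator norm of this family is at most $TB_N$. By the classical convergence theorem for the Magnus series (see \cite{BlanesCasasOteoRos2009}), if $TB_N<\pi$ then $V_N(1)=\exp(\Omega_N(T))$. Here $\Omega_N(T)=\sum_{k\geq1}\Omega_N^{(k)}(T)$ is an absolutely convergent series. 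Its $k$-th term is a homogeneous iterated integral of nested commutators of $k$ factors $TH_N(\tau_i)$, and therefore satisfies $\|\Omega_N^{(k)}(T)\|\leq c_k (TB_N)^k$, with $\sum_k c_k x^k<\infty$ for $x<\pi$. I therefore choose $T_N=\pi/(2B_N)$, or any $T_N$ with $T_NB_N<\pi$; if $B_N=0$ everything is trivial.

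Next I match terms. By the convention defining the coefficients $H_{\mathrm{FM},N}^{[\ell]}$, one has $\Omega_N^{(\ell+1)}(T)=-\ii T\cdot T^\ell H_{\mathrm{FM},N}^{[\ell]}$. This identity is exactly how the coefficients are defined; for $\ell=0,1$ it is checked against the displayed formulas. Hence
\[
-\ii T H_{\mathrm{FM},L,N}^{(T)}=\sum_{k=1}^{L+1}\Omega_N^{(k)}(T),
\]
and the remainder satisfies
\[
R_{N,L}(T)=\Omega_N(T)+\ii TH_{\mathrm{FM},L,N}^{(T)}=\sum_{k\geq L+2}\Omega_N^{(k)}(T),\qquad \|R_{N,L}(T)\|\leq \sum_{k\geq L+2}c_k(TB_N)^k\leq C'_{N,L}T^{L+2}
\]
for $T<T_N$. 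The last bound holds since $\sum_{k\geq L+2}c_kx^{k-L-2}$ is bounded for $x\leq T_NB_N<\pi$.

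To pass from logarithms to exponentials I use the skew-Hermitian structure. Both $A=\Omega_N(T)$ and $B=-\ii TH_{\mathrm{FM},L,N}^{(T)}$ are skew-Hermitian. The first is skew-Hermitian because it is a logarithm produced by the Magnus series, each term of which is a real combination of commutators of skew-Hermitian matrices. The second is skew-Hermitian because $H_{\mathrm{FM},N}^{[\ell]}$ is self-adjoint, which again follows from the same structure. The Duhamel identity
\[
e^{A}-e^{B}=\int_0^1 e^{sA}(A-B)e^{(1-s)B}\,ds
\]
combined with unitarity of $e^{sA}$ and $e^{(1-s)B}$ gives $\|e^A-e^B\|\leq\|A-B\|$. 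This yields the first estimate with $C_{N,L}=C'_{N,L}$. If self-adjointness of the truncated sum were in doubt, I would instead use $\|e^{A}-e^{B}\|\leq\|A-B\|e^{\max(\|A\|,\|B\|)}$, which is also bounded uniformly for $T<T_N$ at the cost of a larger constant.

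For the multi-period estimate I use periodicity: $U_N^{(T)}(qT,0)=U_N^{(T)}(T,0)^q$ for $q\geq0$, and for $q<0$ it equals the inverse power. With $X=U_N^{(T)}(T,0)$ and $Y=\exp(-\ii TH_{\mathrm{FM},L,N}^{(T)})$ both unitary, the telescoping identity
\[
X^q-Y^q=\sum_{j=0}^{q-1}X^{q-1-j}(X-Y)Y^j
\]
gives $\|X^q-Y^q\|\leq q\|X-Y\|$. For negative $q$ I apply the same identity to $X^{-1}=X^*$ and $Y^{-1}=Y^*$, noting $\|X^*-Y^*\|=\|X-Y\|$. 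The main obstacle is the matching step, namely confirming that the paper's normalization of $H_{\mathrm{FM},N}^{[\ell]}$ (including the sign and the factor $\ii/2$ in $H^{[1]}$) coincides with the Magnus terms $\Omega^{(\ell+1)}$, together with invoking the precise convergence radius and coefficient bounds for the Magnus series. Everything else is routine.
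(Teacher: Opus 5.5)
Your argument follows the paper's proof step for step: reduction to a Magnus series that converges absolutely for small \(T\), a tail of order \(T^{L+2}\), the Duhamel formula for \(\ee^A-\ee^B\), and the telescoping identity over \(q\) periods. Your explicit choice \(T_NB_N<\pi\) and the contractive bound \(\|\ee^A-\ee^B\|\le\|A-B\|\) from skew-Hermiticity only sharpen the paper's ``\(T\) small enough'' and ``boundedness of the exponentials''.

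The step you singled out as the main obstacle needs correcting. The identification \(\Omega_N^{(\ell+1)}(T)=-\ii T^{\ell+1}H^{[\ell]}_{\mathrm{FM},N}\) is \emph{not} confirmed by the displayed formula for \(\ell=1\). Write \(X=\int_0^1\int_0^{\tau_1}[H_N(\tau_1),H_N(\tau_2)]\,d\tau_2\,d\tau_1\). For the generator \(-\ii TH_N(\tau)\), the second Magnus term is
\[
\Omega_N^{(2)}(T)=\tfrac12\int_0^1\int_0^{\tau_1}[-\ii TH_N(\tau_1),-\ii TH_N(\tau_2)]\,d\tau_2\,d\tau_1=-\tfrac{T^2}{2}X ,
\]
whereas \(-\ii T^2\cdot\tfrac{\ii}{2}X=+\tfrac{T^2}{2}X\).

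Taken literally, the displayed normalization therefore gives \(A-B=-T^2X+O(T^3)\). Hence \(\|\ee^A-\ee^B\|=T^2\|X\|+O(T^3)\), and the claimed \(T^{L+2}\) bound fails for every \(L\ge1\) whenever \(X\neq0\), which is the generic case (for instance, a rotating field on a two-dimensional block of \(\Hilb_N\)).

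The paper's proof makes the same identification \(\Omega_N^{(T)}(T)=-\ii T\sum_\ell T^\ell H^{[\ell]}_{\mathrm{FM},N}\) without comment. Both arguments are valid only if the \(H^{[\ell]}_{\mathrm{FM},N}\) are \emph{defined} as the genuine Magnus coefficients. In particular this requires \(H^{[1]}_{\mathrm{FM},N}=-\tfrac{\ii}{2}X\), the standard \(\tfrac{1}{2\ii}\) normalization for \(U=\ee^{-\ii TH_F}\). With that reading, everything else you wrote goes through, including self-adjointness of the truncated effective Hamiltonian, which does not depend on the sign.
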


\begin{proof}
For fixed \(N\), the Hamiltonian \(H_N(\tau)\) is a continuous
matrix-valued \(1\)-periodic function.  The rescaled Hamiltonian is
\[
        H_N^{(T)}(t)=H_N(t/T).
\]
Over one period,
\[
        \int_0^T \|H_N^{(T)}(t)\|\,dt
        =
        T\int_0^1 \|H_N(\tau)\|\,d\tau
        \leq
        TB_N.
\]
For \(T>0\) small enough, the Magnus series for the matrix equation
converges absolutely.  Thus there exists a matrix
\(\Omega_N^{(T)}(T)\) such that
\[
        U_N^{(T)}(T,0)
        =
        \exp\bigl(\Omega_N^{(T)}(T)\bigr),
\]
and
\[
        \Omega_N^{(T)}(T)
        =
        -\ii T
        \sum_{\ell=0}^{\infty}
        T^\ell H_{\mathrm{FM},N}^{[\ell]}.
\]
The truncation at order \(L\) satisfies
\[
        \Omega_N^{(T)}(T)
        =
        -\ii T H_{\mathrm{FM},L,N}^{(T)}
        +
        R_{N,L}^{(T)},
\]
with
\[
        \|R_{N,L}^{(T)}\|
        \leq
        C'_{N,L}T^{L+2}
\]
for \(T\) sufficiently small.  This follows from the absolute
convergence of the Magnus series in finite dimension.

We now compare the exponentials.  For bounded matrices \(A\) and \(B\),
\[
        \ee^A-\ee^B
        =
        \int_0^1
        \ee^{(1-\theta)A}(A-B)\ee^{\theta B}
        \,d\theta.
\]
Applying this with
\[
        A=\Omega_N^{(T)}(T),
        \qquad
        B=-\ii T H_{\mathrm{FM},L,N}^{(T)},
\]
and using the boundedness of the exponentials for \(T\) small, we get
\[
        \left\|
        \ee^A-\ee^B
        \right\|
        \leq
        C''_{N,L}\|A-B\|
        \leq
        C_{N,L}T^{L+2}.
\]
Hence
\[
        \left\|
        U_N^{(T)}(T,0)
        -
        \exp\left(-\ii T H_{\mathrm{FM},L,N}^{(T)}\right)
        \right\|
        \leq
        C_{N,L}T^{L+2}.
\]

For stroboscopic times \(qT\), periodicity gives
\[
        U_N^{(T)}(qT,0)
        =
        \left(U_N^{(T)}(T,0)\right)^q
\]
for \(q\geq0\).  Set
\[
        A_T=U_N^{(T)}(T,0),
        \qquad
        B_T=\exp\left(-\ii T H_{\mathrm{FM},L,N}^{(T)}\right).
\]
Both \(A_T\) and \(B_T\) are unitary matrices, because
\(H_{\mathrm{FM},L,N}^{(T)}\) is self-adjoint.  The telescopic identity
\[
        A_T^q-B_T^q
        =
        \sum_{j=0}^{q-1}
        A_T^{q-1-j}(A_T-B_T)B_T^j
\]
gives
\[
        \|A_T^q-B_T^q\|
        \leq
        q\|A_T-B_T\|
        \leq
        qC_{N,L}T^{L+2}.
\]
The case \(q<0\) follows by applying the same argument to the inverses.
This proves the estimate.
\end{proof}

\begin{remark}
The constants in Proposition~\ref{prop:finite-Magnus-estimate} depend
on \(N\).  This is unavoidable without additional uniform bounds on
\(\|H_N(t)\|\).  The proposition is therefore not a uniform
high-frequency theorem in \(N\).  Its role is to justify the effective
Hamiltonian at each finite cut-off level.
\end{remark}

\subsection{Compatibility diagram}

Combining the convergence results of the previous section with the
coefficient convergence proved above, one obtains the following
interpretation.

For fixed \(N\), the chain of constructions is
\[
        H_N^{(T)}(t)
        \longrightarrow
        U_N^{(T)}(t,s)
        \longrightarrow
        H_{\mathrm{FM},L,N}^{(T)}.
\]
The first arrow is the finite-dimensional Hamiltonian evolution.  The
second arrow is the finite-dimensional Floquet--Magnus construction.

As \(N\to\infty\), the propagators converge strongly:
\[
        U_N^{(T)}(t,s)P_Nu
        \longrightarrow
        U^{(T)}(t,s)u,
\]
under the assumptions of Theorem~\ref{thm:strong-cutoff-convergence}.
At the same time, the finite-order effective Hamiltonians converge on
smooth vectors:
\[
        H_{\mathrm{FM},L,N}^{(T)}P_Nu
        \longrightarrow
        H_{\mathrm{FM},L}^{(T)}u.
\]
This gives the commutative picture
\[
\begin{array}{ccc}
H_N^{(T)}(t)
& \longrightarrow &
U_N^{(T)}(t,s)
\\[1mm]
\downarrow \mathrm{FM}_L
&&
\downarrow N\to\infty
\\[1mm]
H_{\mathrm{FM},L,N}^{(T)}
& \longrightarrow &
U^{(T)}(t,s)
\end{array}
\]
where the lower-left object also converges, on the \(H_0\)-smooth scale,
to the formal unbounded effective Hamiltonian
\[
        H_{\mathrm{FM},L}^{(T)}.
\]
A fully commutative diagram at the level of effective propagators
requires an additional self-adjointness and resolvent convergence
statement for
\[
        H_{\mathrm{FM},L,N}^{(T)}
        \longrightarrow
        H_{\mathrm{FM},L}^{(T)}.
\]
This is a separate issue, which we isolate below.

\subsection{A conditional convergence result for effective propagators}

The finite-order operator
\[
        H_{\mathrm{FM},L}^{(T)}
\]
is generally unbounded.  Even if it is symmetric on
\(\Hilb^\infty\), it need not be essentially self-adjoint without
additional assumptions.  Therefore convergence of the coefficients does
not automatically imply convergence of the effective propagators.

We record the standard conditional statement.

\begin{proposition}[Effective propagators under strong resolvent convergence]
\label{prop:effective-propagator-convergence}
Fix \(T>0\) and \(L\geq0\).  Suppose that
\(H_{\mathrm{FM},L}^{(T)}\) admits a self-adjoint extension
\[
        \widehat H_{\mathrm{FM},L}^{(T)}.
\]
Extend each finite-dimensional operator
\(H_{\mathrm{FM},L,N}^{(T)}\) to a self-adjoint operator on \(\Hilb\) by
setting it equal to zero on \((P_N\Hilb)^\perp\).  Assume that
\[
        H_{\mathrm{FM},L,N}^{(T)}
        \longrightarrow
        \widehat H_{\mathrm{FM},L}^{(T)}
\]
in the strong resolvent sense.  Then, for every \(u\in\Hilb\) and every
\(t\in\mathbb R\),
\[
        \exp\left(-\ii tH_{\mathrm{FM},L,N}^{(T)}\right)u
        \longrightarrow
        \exp\left(-\ii t\widehat H_{\mathrm{FM},L}^{(T)}\right)u.
\]
The convergence is uniform for \(t\) in compact intervals.
\end{proposition}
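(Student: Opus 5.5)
This is the classical Trotter--Kato theorem, stated for operators on the whole of \(\Hilb\); my plan is to reduce to it directly. I write \(A_N\) for the extension of \(H_{\mathrm{FM},L,N}^{(T)}\) by zero on \((P_N\Hilb)^\perp\), and \(A=\widehat H_{\mathrm{FM},L}^{(T)}\).

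First I check that \(A_N\) is a bounded self-adjoint operator on \(\Hilb\). By the finite-dimensional Floquet--Magnus theory, each coefficient \(H_{\mathrm{FM},N}^{[\ell]}\) is self-adjoint on \(\Hilb_N\), with the factor \(\ii\) compensating the anti-Hermitian commutators. Since \(T\) is real, the finite sum \(H_{\mathrm{FM},L,N}^{(T)}\) is a Hermitian matrix on \(\Hilb_N\). Its extension \(A_N=P_NH_{\mathrm{FM},L,N}^{(T)}P_N\) is therefore bounded and self-adjoint, and
\[
\ee^{-\ii tA_N}=\ee^{-\ii tH_{\mathrm{FM},L,N}^{(T)}}P_N+(I-P_N).
\]
So the statement has to be read with this group on \(\Hilb\); on vectors of \(\Hilb_N\) it coincides with the matrix exponential.

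Next I pass from resolvents to the unitary groups. Strong resolvent convergence \(A_N\to A\) gives \(f(A_N)u\to f(A)u\) for every bounded continuous \(f\) on \(\mathbb R\) (Reed--Simon, Theorem VIII.20). Applying this with \(f_t(\lambda)=\ee^{-\ii t\lambda}\) yields pointwise convergence in \(t\). Concretely, I approximate \(f_t\) uniformly by functions in the algebra generated by \((\lambda\pm\ii)^{-1}\) together with constants, using Stone--Weierstrass on the one-point compactification. The contribution near infinity is controlled by the resolvent convergence itself, through \(\|(1-g_R(A_N))u\|\to\|(1-g_R(A))u\|\) for a cut-off \(g_R\).

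The main obstacle is uniformity for \(t\) in compact sets, because the functions \(f_t\) are not uniformly approximable as \(t\) varies unless the approximation is done carefully. I would use the standard trick via the resolvent:
\[
\bigl(\ee^{-\ii tA_N}-\ee^{-\ii tA}\bigr)(A+\ii)^{-1}u
=\bigl(\ee^{-\ii tA_N}(A+\ii)^{-1}-(A_N+\ii)^{-1}\ee^{-\ii tA}\bigr)u
+\bigl((A_N+\ii)^{-1}-(A+\ii)^{-1}\bigr)\ee^{-\ii tA}u .
\]
The first term equals the Duhamel-type integral
\[
-\ii\int_0^t \ee^{-\ii(t-s)A_N}\bigl[(A+\ii)^{-1}-(A_N+\ii)^{-1}\bigr]\ee^{-\ii sA}u\,ds ,
\]
obtained by differentiating \(s\mapsto \ee^{-\ii(t-s)A_N}(A_N+\ii)^{-1}\ee^{-\ii sA}(A+\ii)^{-1}u\) and using \(A_N(A_N+\ii)^{-1}=I-\ii(A_N+\ii)^{-1}\). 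Both resulting terms are controlled by the supremum of \(\|[(A_N+\ii)^{-1}-(A+\ii)^{-1}]v\|\) over the compact set \(\{\ee^{-\ii sA}u: |s|\le t_0\}\). That supremum tends to zero by the same compactness argument as Lemma~\ref{lem:uniform-tail}: there is pointwise convergence, and the family is uniformly bounded, with norms at most \(2\). This gives uniform convergence on \(\operatorname{Ran}(A+\ii)^{-1}\), which is dense. Finally, an \(\varepsilon/3\) argument extends it to all \(u\in\Hilb\), since all the groups are unitary.
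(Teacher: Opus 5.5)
Your reduction to Trotter's theorem for the bounded self-adjoint operators $A_N$ is exactly the paper's proof, which does nothing more than cite that standard theorem. Your observation that $\ee^{-\ii tA_N}=\ee^{-\ii tH^{(T)}_{\mathrm{FM},L,N}}P_N+(I-P_N)$ usefully makes precise how the statement must be read; the alternative reading with $P_Nu$ differs from it by $(I-P_N)u\to0$, so the conclusion is the same.

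The self-contained proof of uniformity you add, however, contains a false step. Your ``first term'' $\ee^{-\ii tA_N}(A+\ii)^{-1}u-(A_N+\ii)^{-1}\ee^{-\ii tA}u$ is not the Duhamel integral. At $t=0$ it equals $[(A+\ii)^{-1}-(A_N+\ii)^{-1}]u$, which is nonzero in general, while the integral vanishes. Your computation of $F'(s)$ is correct, but it shows that the integral equals $F(0)-F(t)=(A_N+\ii)^{-1}\bigl(\ee^{-\ii tA_N}-\ee^{-\ii tA}\bigr)(A+\ii)^{-1}u$, which has a resolvent on each side. In fact your first term is just $\bigl(\ee^{-\ii tA_N}-\ee^{-\ii tA}\bigr)(A+\ii)^{-1}u$ minus your second summand. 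Bounding it is therefore the original problem, so the argument as written is circular and does not give uniform convergence on $\operatorname{Ran}(A+\ii)^{-1}$.

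The repair is the three-term identity of Reed--Simon: for $\psi\in\Hilb$,
\[
\begin{aligned}
\bigl(\ee^{-\ii tA_N}-\ee^{-\ii tA}\bigr)(A+\ii)^{-1}\psi
={}&(A_N+\ii)^{-1}\bigl(\ee^{-\ii tA_N}-\ee^{-\ii tA}\bigr)\psi
-\ee^{-\ii tA_N}\bigl[(A_N+\ii)^{-1}-(A+\ii)^{-1}\bigr]\psi\\
&+\bigl[(A_N+\ii)^{-1}-(A+\ii)^{-1}\bigr]\ee^{-\ii tA}\psi .
\end{aligned}
\]
Take $\psi=(A+\ii)^{-1}u$ and $|t|\le t_0$. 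Then:
\begin{enumerate}
\item The first term is exactly your Duhamel integral, so it is bounded by $t_0\sup_{|s|\le t_0}\|[(A+\ii)^{-1}-(A_N+\ii)^{-1}]\ee^{-\ii sA}u\|$.
\item The second term is bounded, independently of $t$, by $\|[(A_N+\ii)^{-1}-(A+\ii)^{-1}](A+\ii)^{-1}u\|$.
\item The third term is handled by your compactness argument on $\{\ee^{-\ii sA}(A+\ii)^{-1}u:|s|\le t_0\}$.
\end{enumerate}
This gives uniform convergence on $\operatorname{Ran}(A+\ii)^{-2}=D(A^2)$, which is still dense, and your $\varepsilon/3$ step then finishes the proof.

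A minor point: $\ee^{-\ii t\lambda}$ has no limit at infinity, so Stone--Weierstrass on the one-point compactification applies to $f_tg_R$, not to $f_t$. In any case, that paragraph becomes redundant once the uniform argument is correct.
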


\begin{proof}
This is the standard strong-resolvent convergence theorem for
self-adjoint operators.  Strong resolvent convergence of self-adjoint
operators is equivalent to strong convergence of the corresponding
unitary groups
\[
        \ee^{-\ii tA_N}
        \longrightarrow
        \ee^{-\ii tA}
\]
for each fixed \(t\), and the convergence is uniform for \(t\) in
compact intervals.  Applying this theorem with
\[
        A_N=H_{\mathrm{FM},L,N}^{(T)},
        \qquad
        A=\widehat H_{\mathrm{FM},L}^{(T)}
\]
gives the result.
\end{proof}

\begin{remark}
The proposition shows exactly what remains to be proved in concrete
models if one wants convergence not only of the effective Hamiltonian
coefficients but also of the effective autonomous dynamics.  One must
establish a self-adjoint realization of the limiting effective
Hamiltonian and prove strong resolvent convergence of the cut-off
effective Hamiltonians.
\end{remark}

\subsection{Conclusion of the periodic construction}

We have proved three facts.

First, at each finite spectral cut-off, the periodically driven system
has a standard finite-dimensional Floquet--Magnus expansion.

Second, the finite-order Floquet--Magnus coefficients converge on the
\(H_0\)-Hilbert scale toward formal unbounded coefficients:
\[
        H_{\mathrm{FM},N}^{[\ell]}P_Nu
        \longrightarrow
        H_{\mathrm{FM}}^{[\ell]}u.
\]

Third, the corresponding effective Hamiltonians satisfy
\[
        H_{\mathrm{FM},L,N}^{(T)}P_Nu
        \longrightarrow
        H_{\mathrm{FM},L}^{(T)}u
\]
for smooth enough \(u\).

Thus the spectral cut-off construction is compatible with finite-order
effective Hamiltonians.  The cut-off oscillatory integrals construct
the approximate dynamics; finite-dimensional Floquet theory extracts an
autonomous effective Hamiltonian at each cut-off; and the coefficients
of this effective Hamiltonian converge, on smooth vectors, to the
formal unbounded Floquet--Magnus coefficients.
\section{Examples and classes of Hamiltonians}
\label{sec:examples}

We now describe several classes of Hamiltonians to which the spectral
cut-off construction applies.  The purpose of this section is not to
give the most general possible assumptions in each setting, but rather
to show that the abstract hypotheses used above are natural in standard
functional analytic situations.

Throughout this section, the guiding principle is the following.  One
chooses a positive self-adjoint reference operator \(H_0\), usually
elliptic and with compact resolvent, and works with the Hilbert scale
\[
        \Hilb^s=D((1+H_0)^s).
\]
If the time-dependent Hamiltonians satisfy estimates of the form
\[
        H(t):\Hilb^{s+\mu}\longrightarrow \Hilb^s,
\]
uniformly for \(t\) in compact intervals, and if the corresponding
non-autonomous Schr\"odinger equation is well posed, then the spectral
cut-off dynamics constructed in the previous sections applies.

\subsection{Schrödinger operators with time-dependent potentials}
\label{subsec:schrodinger-time-dependent-potentials}

Let \(M\) be a compact smooth Riemannian manifold without boundary, and let
\(\Delta\geq 0\) denote the non-negative Laplace--Beltrami operator. We work
on
\[
\mathcal{H}=L^{2}(M)
\]
and choose the reference operator
\[
H_{0}=(1+\Delta)^{1/2}.
\]
Set
\[
\Lambda=1+H_{0}.
\]
The scale
\[
\mathcal{H}^{r}=D(\Lambda^{r})
\]
is equivalent to the usual Sobolev scale \(H^{r}(M)\).

Consider
\[
H(t)=\Delta+V(t),
\]
where \(V(t)\) denotes multiplication by a real-valued function
\(V(t,\cdot)\).

\begin{lemma}[Commutator estimate for multiplication operators]
\label{lem:potential-commutator-estimate}
Let \(r\geq 0\), and let \(V\in C^{\infty}(M,\mathbb{R})\). Then
\[
[V,\Lambda^{2r}]
\in
\Psi^{2r-1}(M).
\]
Consequently,
\[
\Lambda^{-r}[V,\Lambda^{2r}]\Lambda^{-r}
\in
\Psi^{-1}(M)
\subset
\mathcal{L}(L^{2}(M)).
\]
In particular, there exists a constant \(C_{r,V}>0\) such that
\[
\left|
\langle u,\mathrm{i}[V,\Lambda^{2r}]u\rangle
\right|
\leq
C_{r,V}\|u\|_{r}^{2}
\]
for every \(u\in C^{\infty}(M)\).
\end{lemma}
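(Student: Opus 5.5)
The plan is to work entirely within the classical pseudodifferential calculus on the compact manifold \(M\), in the spirit of \cite{Seeley1967,Shubin1987,Taylor1981}.

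\emph{Step 1: \(\Lambda^{2r}\) is classical of order \(2r\).} Since \(1+\Delta\) is a positive elliptic classical operator of order \(2\), Seeley's theory of complex powers gives \(H_0=(1+\Delta)^{1/2}\in\Psi^1(M)\), a classical elliptic operator with principal symbol \(|\xi|_g\). Hence \(\Lambda=1+H_0\in\Psi^1(M)\) is elliptic and positive. A second application of Seeley's theorem, this time to \(\Lambda\), shows that \(\Lambda^{s}\in\Psi^{s}(M)\) for every real \(s\), with principal symbol \((1+|\xi|_g)^s\) modulo lower order. In particular \(\Lambda^{2r}\in\Psi^{2r}(M)\) and \(\Lambda^{-r}\in\Psi^{-r}(M)\).

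\emph{Step 2: the commutator drops one order.} Multiplication by \(V\in C^\infty(M)\) is an operator in \(\Psi^0(M)\) with scalar symbol \(V(x)\). The symbol calculus gives
\[
[V,\Lambda^{2r}]\in\Psi^{2r-1}(M),
\]
with principal symbol \(-\ii\{V,(1+|\xi|_g)^{2r}\}\) up to sign convention. The reason is that the principal symbols commute, being scalar, so the order-\(2r\) part cancels. To make this rigorous I would write \(\Lambda^{2r}\) locally in charts via a partition of unity, apply the composition formula to both \(V\Lambda^{2r}\) and \(\Lambda^{2r}V\), and observe that the leading terms agree. The smoothing remainders created by the chart patching are harmless.

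\emph{Step 3: conclude.} Composition then gives \(\Lambda^{-r}[V,\Lambda^{2r}]\Lambda^{-r}\in\Psi^{-r+2r-1-r}=\Psi^{-1}(M)\). Operators of nonpositive order are bounded on \(L^2(M)\), so this operator is bounded, and on compact \(M\) it is in fact compact. For \(u\in C^\infty(M)\) we may write
\[
\langle u,\ii[V,\Lambda^{2r}]u\rangle=\langle \Lambda^r u,\ii\Lambda^{-r}[V,\Lambda^{2r}]\Lambda^{-r}\Lambda^r u\rangle,
\]
which is legitimate because \(\Lambda^{\pm r}\) preserve \(C^\infty(M)\) and \(\Lambda^r\) is self-adjoint. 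Cauchy--Schwarz then yields the bound with
\[
C_{r,V}=\|\Lambda^{-r}[V,\Lambda^{2r}]\Lambda^{-r}\|_{\mathcal L(L^2)},
\]
where we use \(\|\Lambda^r u\|=\|u\|_r\).

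\emph{Main obstacle.} The only step that is not routine is Step 1. It requires knowing that the non-integer power \(\Lambda^{2r}\) of the first-order operator \(\Lambda\) is a classical pseudodifferential operator on the manifold. This is exactly Seeley's result \cite{Seeley1967}, and I would simply invoke it. The remaining steps are standard symbol-calculus bookkeeping.
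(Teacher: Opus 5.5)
Your proposal is correct and follows the paper's proof step for step. Both use the symbolic commutator rule to get \([V,\Lambda^{2r}]\in\Psi^{2r-1}(M)\), conjugate by \(\Lambda^{-r}\) to land in \(\Psi^{-1}(M)\subset\mathcal{L}(L^{2}(M))\), and finish with the identity \(\langle u,\mathrm{i}[V,\Lambda^{2r}]u\rangle=\langle\Lambda^{r}u,\mathrm{i}\Lambda^{-r}[V,\Lambda^{2r}]\Lambda^{-r}\Lambda^{r}u\rangle\) plus Cauchy--Schwarz; the only difference is that you explicitly invoke Seeley's complex powers to justify \(\Lambda^{2r}\in\Psi^{2r}(M)\), which the paper simply asserts.
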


\begin{proof}
The operator \(\Lambda^{2r}\) is a classical pseudodifferential operator of
order \(2r\), while multiplication by \(V\) is of order zero. The standard
commutator rule in the pseudodifferential calculus gives
\[
[V,\Lambda^{2r}]
\in
\Psi^{2r-1}(M).
\]
Since \(\Lambda^{-r}\in\Psi^{-r}(M)\), composition yields
\[
\Lambda^{-r}[V,\Lambda^{2r}]\Lambda^{-r}
\in
\Psi^{-r+(2r-1)-r}(M)
=
\Psi^{-1}(M).
\]
An operator of order \(-1\) is bounded on \(L^{2}(M)\). Therefore
\[
\begin{split}
\left|
\langle u,\mathrm{i}[V,\Lambda^{2r}]u\rangle
\right|
&=
\left|
\left\langle
\Lambda^{r}u,
\mathrm{i}\Lambda^{-r}[V,\Lambda^{2r}]
\Lambda^{-r}\Lambda^{r}u
\right\rangle
\right|
\\
&\leq
\left\|
\Lambda^{-r}[V,\Lambda^{2r}]\Lambda^{-r}
\right\|_{\mathcal{L}(L^{2})}
\|u\|_{r}^{2}.
\end{split}
\]
This proves the estimate.
\end{proof}

\begin{proposition}[Uniform energy stability for spectral Schrödinger
cut-offs]
\label{prop:schrodinger-uniform-stability}
Let \(I\subset\mathbb{R}\) be compact and assume that
\[
V\in C\bigl(I,C^{\infty}(M,\mathbb{R})\bigr).
\]
Define
\[
H(t)=\Delta+V(t)
\]
and
\[
H_{N}(t)=P_{N}H(t)P_{N},
\qquad
P_{N}=\mathbf{1}_{[0,N]}(H_{0}).
\]
Then, for every \(r\geq 0\), there exists a constant \(C_{I,r}>0\),
independent of \(N\), such that
\[
\|U_{N}(t,s)v\|_{r}
\leq
\exp\bigl(C_{I,r}|t-s|\bigr)\|v\|_{r}
\]
for every \(v\in\mathcal{H}_{N}\) and every \(s,t\in I\).
\end{proposition}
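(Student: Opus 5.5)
The plan is to reduce the statement to Proposition~\ref{prop:uniform-cutoff-stability} by checking Assumption~\ref{ass:energy-commutator-bound} at level \(r\) with a constant function \(c_r(t)\equiv 2C_{I,r}\). The Gronwall bound of that proposition then gives
\[
\|U_N(t,s)v\|_r\leq \exp\Bigl(\tfrac12\int_{\min(s,t)}^{\max(s,t)}c_r\,d\tau\Bigr)\|v\|_r=\exp(C_{I,r}|t-s|)\|v\|_r ,
\]
uniformly in \(N\). So everything comes down to bounding the quadratic form \(\langle v,\ii[H(t),\Lambda^{2r}]v\rangle\) on \(C^\infty(M)=\Hilb^\infty\) by a multiple of \(\|v\|_r^2\), uniformly in \(t\in I\).

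First, I would split \(H(t)=\Delta+V(t)\). The Laplacian part drops out exactly. Indeed \(\Delta=H_0^2-1\) is a function of \(H_0\), so it commutes with \(\Lambda^{2r}\) on smooth vectors. Concretely, \(\langle \Delta v,\Lambda^{2r}v\rangle-\langle\Lambda^{2r}v,\Delta v\rangle=0\) for \(v\in C^\infty(M)\), by the spectral theorem, since both sides equal \(\int (\lambda^2-1)(1+\lambda)^{2r}\,d\langle E_{H_0}(\lambda)v,v\rangle\). Only the potential contributes, and Lemma~\ref{lem:potential-commutator-estimate} gives
\[
|\langle v,\ii[V(t),\Lambda^{2r}]v\rangle|\leq \|\Lambda^{-r}[V(t),\Lambda^{2r}]\Lambda^{-r}\|_{\mathcal L(L^2)}\|v\|_r^2 .
\]

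The main obstacle is uniformity of this constant in \(t\in I\). Lemma~\ref{lem:potential-commutator-estimate} is stated for a single \(V\) and hides how \(C_{r,V}\) depends on \(V\). I would make this explicit by noting that the map \(V\mapsto \Lambda^{-r}[V,\Lambda^{2r}]\Lambda^{-r}\) is linear. It is also continuous from \(C^\infty(M)\), with its Fréchet topology, into \(\mathcal L(L^2(M))\). This follows because the symbol seminorms of \([V,\Lambda^{2r}]\) in \(\Psi^{2r-1}\) are controlled by finitely many \(C^k\) seminorms of \(V\), via the composition formula, and \(L^2\)-boundedness of order \(-1\) operators is controlled by finitely many symbol seminorms (Calderón--Vaillancourt). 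Since \(t\mapsto V(t)\) is continuous into \(C^\infty(M)\) and \(I\) is compact, the image \(\{V(t):t\in I\}\) is compact. Hence the operator norms are bounded by some \(C_{I,r}\), up to the factor \(2\) in the definition of \(c_r\).

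For \(r=0\) the commutator vanishes and the statement is just unitarity, which is consistent. I would also remark that \(\Hilb_N\subset C^\infty(M)\), because eigenfunctions of \(H_0\) are smooth. The finite-dimensional computation in Proposition~\ref{prop:uniform-cutoff-stability} therefore only uses the estimate on \(\Hilb^\infty\), as required.
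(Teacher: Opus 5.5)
Your proposal is correct and follows the paper's proof essentially step for step: the Laplacian drops out because \(\Lambda\) is a function of \(\Delta\), Lemma~\ref{lem:potential-commutator-estimate} bounds the remaining commutator with \(c_r(t)=\|\Lambda^{-r}[V(t),\Lambda^{2r}]\Lambda^{-r}\|_{\mathcal{L}(L^{2})}\), continuity of \(t\mapsto V(t)\) in \(C^\infty(M)\) makes \(c_r\) bounded on \(I\), and Proposition~\ref{prop:uniform-cutoff-stability} concludes. Your justification of uniformity in \(t\) uses continuity of the linear map \(V\mapsto\Lambda^{-r}[V,\Lambda^{2r}]\Lambda^{-r}\) from \(C^\infty(M)\) into \(\mathcal{L}(L^{2}(M))\) together with compactness of \(\{V(t):t\in I\}\); this only spells out what the paper compresses into ``continuity of the pseudodifferential symbolic operations''.
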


\begin{proof}
Since \(H_{0}\) and \(\Lambda\) are functions of \(\Delta\), one has
\[
[\Delta,\Lambda^{2r}]=0.
\]
It follows that
\[
[H(t),\Lambda^{2r}]
=
[V(t),\Lambda^{2r}].
\]
By Lemma~\ref{lem:potential-commutator-estimate},
\[
\left|
\langle u,\mathrm{i}[H(t),\Lambda^{2r}]u\rangle
\right|
\leq
c_{r}(t)\|u\|_{r}^{2},
\]
where
\[
c_{r}(t)
=
\left\|
\Lambda^{-r}[V(t),\Lambda^{2r}]\Lambda^{-r}
\right\|_{\mathcal{L}(L^{2})}.
\]
The continuity of \(t\mapsto V(t)\) in \(C^{\infty}(M)\), together with
continuity of the pseudodifferential symbolic operations, implies that
\(c_{r}\) is bounded on \(I\). The conclusion follows from
Proposition~\ref{prop:uniform-cutoff-stability}.
\end{proof}

The preceding estimate verifies the main non-trivial hypothesis in the
constructive spectral approximation theorem.

\begin{theorem}[Construction and quantitative approximation for
time-dependent Schrödinger operators]
\label{thm:schrodinger-construction-rate}
Let \(M\), \(H_{0}\), and \(H(t)\) be as above, with
\[
V\in C\bigl(I,C^{\infty}(M,\mathbb{R})\bigr).
\]
Then the limits
\[
U(t,s)u
=
\lim_{N\to\infty}U_{N}(t,s)P_{N}u
\]
exist in \(L^{2}(M)\), uniformly for \((t,s)\in I\times I\), and define a
strongly continuous unitary propagator.

The propagator preserves every Sobolev space \(H^{r}(M)\). For
\(u\in C^{\infty}(M)\), it satisfies
\[
\mathrm{i}\partial_{t}U(t,s)u
=
H(t)U(t,s)u.
\]

Moreover, for every \(\sigma>0\), there exists
\(C_{I,\sigma}>0\) such that
\[
\sup_{s,t\in I}
\|U_{N}(t,s)P_{N}u-U(t,s)u\|_{L^{2}}
\leq
C_{I,\sigma}(1+N)^{-\sigma}
\|u\|_{H^{2+\sigma}}
\]
for every \(u\in H^{2+\sigma}(M)\).
More generally, for every \(r\geq 0\),
\[
\sup_{s,t\in I}
\|U_{N}(t,s)P_{N}u-U(t,s)u\|_{H^{r}}
\leq
C_{I,r,\sigma}(1+N)^{-\sigma}
\|u\|_{H^{r+2+\sigma}}.
\]
\end{theorem}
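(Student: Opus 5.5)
The plan is to deduce the statement from Theorem~\ref{thm:construction-by-spectral-cutoff} and Corollary~\ref{cor:quantitative-energy-convergence}, taking \(\mu=2\). Since \(H_0=(1+\Delta)^{1/2}\) is a first-order elliptic operator, the scale \(\Hilb^r=D(\Lambda^r)\) coincides with \(H^r(M)\) with equivalent norms. The Laplacian \(\Delta\) has order two, so it loses exactly two steps of this scale. Once the three hypotheses of the constructive theorem are checked at every level \(r\), everything except preservation of each \(H^r(M)\) follows directly.

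\emph{Step 1: mapping hypothesis.} I will show that \(t\mapsto H(t)\) is norm-continuous \(I\to\mathcal L(H^{r+2},H^r)\) for every \(r\geq0\). The term \(\Delta\) is constant in \(t\) and bounded \(H^{r+2}\to H^r\). For the potential, multiplication by a smooth function \(W\) is bounded on \(H^r\), with norm controlled by finitely many \(C^k\)-seminorms of \(W\) (for integer \(r\) by the Leibniz rule, then by interpolation). Applying this to \(W=V(t)-V(t')\), continuity of \(V\) in \(C^\infty(M)\) gives norm-continuity of \(t\mapsto V(t)\) in \(\mathcal L(H^r)\subset\mathcal L(H^{r+2},H^r)\).

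\emph{Step 2: commutator and symmetry hypotheses.} The energy commutator bound holds at every level \(r\) with \(c_r\) bounded, hence \(c_r\in L^1(I)\). This is exactly what the proof of Proposition~\ref{prop:schrodinger-uniform-stability} establishes, using \([\Delta,\Lambda^{2r}]=0\) and Lemma~\ref{lem:potential-commutator-estimate}. For symmetry, \(\Hilb^\infty=C^\infty(M)\) by elliptic regularity. On \(C^\infty(M)\), \(\Delta\) is symmetric by Green's formula on the closed manifold, and multiplication by a real function is symmetric.

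\emph{Step 3: applying the constructive theorem.} Theorem~\ref{thm:construction-by-spectral-cutoff} then gives the following.
\begin{enumerate}
\item The uniform limit \(U(t,s)u=\lim_{N\to\infty}U_N(t,s)P_Nu\) exists and defines a strongly continuous unitary propagator.
\item \(U(t,s)\) preserves \(C^\infty(M)\), and \(\ii\partial_tU(t,s)u=H(t)U(t,s)u\) for \(u\in C^\infty(M)\).
\item The Cauchy bound holds with \(\mu=2\). Letting \(M\to\infty\) in it gives the \(L^2\) rate \(C_{I,\sigma}(1+N)^{-\sigma}\|u\|_{2+\sigma}\), and \(\|u\|_{2+\sigma}\) may be replaced by \(\|u\|_{H^{2+\sigma}}\) by norm equivalence.
\end{enumerate}
The \(H^r\) rate follows from Corollary~\ref{cor:quantitative-energy-convergence}, since the commutator bound holds at levels \(r\) and \(r+2+\sigma\).

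\emph{Step 4: preservation of \(H^r(M)\).} For \(u\in H^r\), Proposition~\ref{prop:schrodinger-uniform-stability} bounds \(U_N(t,s)P_Nu\) in \(H^r\) uniformly in \(N\). These vectors converge in \(L^2\) to \(U(t,s)u\). A subsequence converges weakly in \(H^r\), and its weak limit must equal \(U(t,s)u\). Hence \(U(t,s)u\in H^r\) with \(\|U(t,s)u\|_r\le e^{C_{I,r}|I|}\|u\|_r\).

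The main obstacle I expect is the level-\(r\) Cauchy estimate behind the corollary. In the Duhamel term, unitarity of \(U_N\) must be replaced by the \(N\)-uniform \(H^r\)-stability of \(U_N\). One also needs \(P_NH(\tau)(P_M-P_N)\) to be bounded \(H^{r+2}\to H^r\) uniformly in \(N\) and \(M\), which holds because \(P_N\) is a contraction on each \(H^r\). I would write this estimate out explicitly rather than rely only on the sketch given in the corollary.
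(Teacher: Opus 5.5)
Your proposal is correct and follows the paper's own proof. Both take \(\mu=2\), derive the energy commutator bound at every level from \([\Delta,\Lambda^{2r}]=0\) and Lemma~\ref{lem:potential-commutator-estimate}, and then invoke Theorem~\ref{thm:construction-by-spectral-cutoff} together with its level-\(r\) Cauchy estimate. Your explicit checks of norm-continuity and symmetry, and your weak-compactness argument giving \(U(t,s)H^r(M)\subset H^r(M)\) from the \(N\)-uniform \(H^r\) bounds, fill in details the paper leaves implicit.
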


\begin{proof}
The family \(H(t)=\Delta+V(t)\) satisfies
\[
H(t):H^{r+2}(M)\longrightarrow H^{r}(M)
\]
continuously and uniformly for \(t\in I\). Thus the loss parameter in the
abstract construction is
\[
\mu=2.
\]
Proposition~\ref{prop:schrodinger-uniform-stability} gives uniform stability
of the cut-off propagators at every level of the Sobolev scale. The existence
of the limiting propagator and its strong equation on \(C^{\infty}(M)\)
therefore follow from
Theorem~\ref{thm:construction-by-spectral-cutoff}.

Applying the quantitative Cauchy estimate at level \(r\), with
\(u\in H^{r+2+\sigma}(M)\), gives
\[
\sup_{s,t\in I}
\|U_{N}(t,s)P_{N}u-U(t,s)u\|_{H^{r}}
\leq
C_{I,r,\sigma}(1+N)^{-\sigma}
\|u\|_{H^{r+2+\sigma}}.
\]
The case \(r=0\) gives the stated \(L^{2}\)-estimate.
\end{proof}

\begin{remark}
\label{rem:schrodinger-no-prior-propagator}
Unlike the corresponding statement in the preceding version of the paper,
Theorem~\ref{thm:schrodinger-construction-rate} does not assume in advance
that the non-autonomous Schrödinger equation has a propagator preserving the
Sobolev scale. Both the propagator and its Sobolev stability are obtained
from the uniform spectral cut-off estimates.
\end{remark}

\subsection{First-order pseudodifferential Hamiltonians}
\label{subsec:first-order-pseudodifferential-hamiltonians}

We next consider a Hermitian vector bundle \(E\to M\) over a compact manifold.
Let
\[
\mathcal{H}=L^{2}(M,E),
\]
and let
\[
H_0\in\Psi^1(M,E)
\]
be a positive elliptic self-adjoint operator with scalar principal
symbol, and set
\[
\Lambda=1+H_0.
\]
We use the scale
\[
\Hilb^r=D(\Lambda^r)\simeq H^r(M,E),
\qquad
P_N=\mathbf{1}_{[0,N]}(H_0).
\]

Let
\[
H(t)\in\Psi^{1}(M,E)
\]
be a continuous family of symmetric classical pseudodifferential operators.

\begin{lemma}[Energy commutator estimate in order one]
\label{lem:first-order-energy-commutator}
For every \(r\geq 0\),
\[
[H(t),\Lambda^{2r}]
\in
\Psi^{2r}(M,E).
\]
Consequently,
\[
\Lambda^{-r}[H(t),\Lambda^{2r}]\Lambda^{-r}
\in
\Psi^{0}(M,E),
\]
and there exists \(c_{r}(t)\geq 0\) such that
\[
\left|
\langle u,\mathrm{i}[H(t),\Lambda^{2r}]u\rangle
\right|
\leq
c_{r}(t)\|u\|_{r}^{2}
\]
for every \(u\in C^{\infty}(M,E)\).
If \(t\mapsto H(t)\) is continuous in the Fréchet topology of
\(\Psi^{1}(M,E)\), then \(c_{r}\) may be chosen bounded on compact time
intervals.
\end{lemma}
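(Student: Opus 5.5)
The plan follows the pattern of Lemma~\ref{lem:potential-commutator-estimate}, with one change: the order-lowering now comes from the principal symbols, not from $V$ having order zero. The first step is to identify $\Lambda^{2r}$ as an element of $\Psi^{2r}(M,E)$. By Seeley's theory of complex powers, $\Lambda=1+H_0$ is a positive elliptic classical operator of order one with scalar principal symbol $\sigma_1(H_0)(x,\xi)\,\mathrm{Id}_E$. Hence $\Lambda^{2r}\in\Psi^{2r}(M,E)$, and its principal symbol is $\sigma_1(H_0)^{2r}\,\mathrm{Id}_E$, which is again scalar.

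The second step is the commutator. For general bundle-valued operators, $[A,B]$ with $A\in\Psi^{1}$ and $B\in\Psi^{2r}$ lies only in $\Psi^{2r+1}$, because matrix-valued principal symbols need not commute. Here one factor has scalar principal symbol, so the principal symbol of the commutator is
\[
\sigma_1(H(t))\,\sigma_1(H_0)^{2r}-\sigma_1(H_0)^{2r}\,\sigma_1(H(t))=0 .
\]
This gives $[H(t),\Lambda^{2r}]\in\Psi^{2r+1-1}=\Psi^{2r}(M,E)$. The subprincipal term is described by the Poisson bracket and has order $2r$, which is consistent with this. I would state this scalar-symbol observation explicitly, since it is the only point where the hypothesis on $H_0$ is used. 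Composing with $\Lambda^{-r}\in\Psi^{-r}$ on both sides then gives $\Lambda^{-r}[H(t),\Lambda^{2r}]\Lambda^{-r}\in\Psi^{0}(M,E)$. By the $L^2$-boundedness theorem for order-zero operators, this operator is bounded on $L^2(M,E)$.

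For the quadratic form estimate, take $u\in C^\infty(M,E)$ and write
\[
\langle u,\ii[H(t),\Lambda^{2r}]u\rangle=\langle\Lambda^r u,\ii\Lambda^{-r}[H(t),\Lambda^{2r}]\Lambda^{-r}\Lambda^r u\rangle,
\]
exactly as in the proof of Lemma~\ref{lem:potential-commutator-estimate}. This yields the bound with
\[
c_r(t)=\|\Lambda^{-r}[H(t),\Lambda^{2r}]\Lambda^{-r}\|_{\mathcal L(L^2)}.
\]
The identity is legitimate because $\Lambda^{\pm r}$ preserve $C^\infty(M,E)$ and are self-adjoint.

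The main obstacle, though a mild one, is the uniformity in $t$. I would use three continuity facts. First, composition $\Psi^{a}\times\Psi^{b}\to\Psi^{a+b}$ is continuous for the Fréchet topologies of classical symbols. Second, the commutator map $H\mapsto[H,\Lambda^{2r}]$ is continuous into $\Psi^{2r}$; this holds because it is linear and the order-$(2r+1)$ part of the commutator symbol vanishes identically. Third, the $L^2$ operator norm of a $\Psi^0$ operator is bounded by finitely many seminorms of its symbol, by the Calderón--Vaillancourt-type bound on compact manifolds. Together these make $t\mapsto c_r(t)$ continuous whenever $t\mapsto H(t)$ is continuous in $\Psi^1(M,E)$, so $c_r$ is bounded on compact intervals.
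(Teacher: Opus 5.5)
Your proposal is correct and follows the same route as the paper. Both arguments use the order count for the commutator, conjugate by \(\Lambda^{-r}\) into \(\Psi^{0}\), apply \(L^2\)-boundedness of order-zero operators, and get uniformity in \(t\) from continuity of symbol seminorms. Your explicit use of the scalar principal symbol of \(H_0\) to cancel the order-\((2r+1)\) term is a useful addition, since the paper's one-line count ``\(1+2r-1=2r\)'' relies on it silently in the bundle-valued setting.
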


\begin{proof}
The commutator of operators of orders \(1\) and \(2r\) has order at most
\[
1+2r-1=2r.
\]
Therefore
\[
\Lambda^{-r}[H(t),\Lambda^{2r}]\Lambda^{-r}
\in
\Psi^{-r+2r-r}(M,E)
=
\Psi^{0}(M,E).
\]
The \(L^{2}\)-boundedness theorem for order-zero pseudodifferential operators
gives the quadratic-form estimate. Uniformity in \(t\) follows from
continuity of the relevant symbol seminorms on the compact interval \(I\).
\end{proof}

\begin{theorem}[Spectral construction for first-order Hamiltonians]
\label{thm:first-order-pseudodifferential-construction}
Let
\[
H\in C\bigl(I,\Psi^{1}(M,E)\bigr)
\]
be a family of symmetric classical pseudodifferential operators. Assume that
the continuity holds in the Fréchet topology of
\(\Psi^{1}(M,E)\). Then the spectral cut-off propagators
\[
U_{N}(t,s)
\]
associated with
\[
H_{N}(t)=P_{N}H(t)P_{N}
\]
converge strongly and uniformly on \(I\times I\) to a strongly continuous
unitary propagator \(U(t,s)\).

The propagator preserves \(H^{r}(M,E)\) for every \(r\geq 0\), and, for
\(u\in C^{\infty}(M,E)\),
\[
\mathrm{i}\partial_{t}U(t,s)u
=
H(t)U(t,s)u.
\]
Moreover, for every \(r\geq 0\) and every \(\sigma>0\),
\[
\sup_{s,t\in I}
\|U_{N}(t,s)P_{N}u-U(t,s)u\|_{H^{r}}
\leq
C_{I,r,\sigma}(1+N)^{-\sigma}
\|u\|_{H^{r+1+\sigma}}.
\]
\end{theorem}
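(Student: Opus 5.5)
The plan is to reduce the statement to Theorem~\ref{thm:construction-by-spectral-cutoff} and Corollary~\ref{cor:quantitative-energy-convergence} with loss parameter \(\mu=1\), following the argument used for Theorem~\ref{thm:schrodinger-construction-rate}.

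\emph{Step 1: the mapping hypothesis.} I would first check hypothesis (1) of Theorem~\ref{thm:construction-by-spectral-cutoff}. Since \(\Hilb^r\simeq H^r(M,E)\), every operator in \(\Psi^1(M,E)\) is bounded from \(H^{r+1}\) to \(H^r\). Its operator norm is controlled by finitely many symbol seminorms, the number depending on \(r\). Fréchet continuity of \(t\mapsto H(t)\) therefore gives norm continuity
\[
I\longrightarrow\mathcal L(\Hilb^{r+1},\Hilb^r)
\]
for every \(r\geq0\). The symmetry hypothesis (3) on \(\Hilb^\infty=C^\infty(M,E)\) is part of the assumptions.

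\emph{Step 2: the commutator hypothesis.} Next I would verify hypothesis (2) at every level of the scale. Lemma~\ref{lem:first-order-energy-commutator} gives the energy commutator bound for every \(r\geq0\), with \(c_r\) bounded on the compact interval \(I\), hence \(c_r\in L^1(I)\).

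One point needs checking here. The lemma is stated for \(\langle u,\ii[H(t),\Lambda^{2r}]u\rangle\), whereas Assumption~\ref{ass:energy-commutator-bound} is written as the difference \(\langle H v,\Lambda^{2r}v\rangle-\langle\Lambda^{2r}v,Hv\rangle\). For \(v\in C^\infty\), symmetry of \(H(t)\) and of \(\Lambda^{2r}\) identifies the two expressions up to a sign or conjugation, so the bound transfers. Proposition~\ref{prop:uniform-cutoff-stability} then yields bounds on \(\|U_N(t,s)\|_{\mathcal L(\Hilb^r)}\) that are uniform in \(N\), for every \(r\geq0\).

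\emph{Step 3: applying the abstract results.} With these hypotheses, Theorem~\ref{thm:construction-by-spectral-cutoff} (taking \(\mu=1\) and any \(\sigma>0\)) gives three conclusions:
\begin{itemize}
\item the uniform Cauchy property on \(I\times I\);
\item existence of a strongly continuous unitary limit \(U(t,s)\);
\item \(\ii\partial_tU(t,s)u=H(t)U(t,s)u\) for smooth \(u\), since the assumptions hold at every level.
\end{itemize}

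Preservation of \(H^r\) follows by a weak-compactness argument. For \(u\in H^r\), the family \(U_N(t,s)P_Nu\) is bounded in \(H^r\) uniformly in \(N\) by Step~2, and it converges in \(L^2\). By weak compactness and lower semicontinuity of the norm, the limit \(U(t,s)u\) lies in \(H^r\), with the same bound. The quantitative estimate in \(H^r\) with loss \(r+1+\sigma\) is exactly Corollary~\ref{cor:quantitative-energy-convergence} with \(\mu=1\). That corollary needs the commutator bound at levels \(r\) and \(r+1+\sigma\), and both are supplied by Step~2.

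\emph{Expected obstacle.} The main care point is the uniform-in-\(t\) control in Lemma~\ref{lem:first-order-energy-commutator}. One must check that the symbol seminorms of \(\Lambda^{-r}[H(t),\Lambda^{2r}]\Lambda^{-r}\) in \(\Psi^0\) depend continuously on finitely many seminorms of \(H(t)\). This is standard continuity of composition in the symbol calculus. For non-integer \(r\) one uses that \(\Lambda^{2r}\) is a classical operator by Seeley's construction of complex powers. Combined with Calderón–Vaillancourt, this gives \(\sup_I c_r<\infty\). The scalar principal symbol of \(H_0\) is what makes the commutator drop one order even in the bundle case. The principal symbol of \(H(t)\) is matrix-valued, but it still commutes with the scalar symbol of \(\Lambda^{2r}\), so the order-\((2r+1)\) part of the commutator vanishes.
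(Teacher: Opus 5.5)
Your proposal is correct and follows the same reduction as the paper. It identifies the loss $\mu=1$ from the $\Psi^{1}$ mapping property, invokes Lemma~\ref{lem:first-order-energy-commutator} for the energy commutator bound at every level of the scale, and concludes from Theorem~\ref{thm:construction-by-spectral-cutoff} together with Corollary~\ref{cor:quantitative-energy-convergence}. Your additional checks are sound, and they fill in points the paper's short proof leaves implicit. These are the identification of the quadratic-form commutator with the form in Assumption~\ref{ass:energy-commutator-bound}, the weak-compactness argument for preservation of each $H^{r}$ (the constructive theorem itself only asserts preservation of $\Hilb^{\infty}$), and the role of the scalar principal symbol of $H_0$ in the bundle case.
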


\begin{proof}
A first-order pseudodifferential operator satisfies
\[
H(t):H^{r+1}(M,E)\longrightarrow H^{r}(M,E),
\]
uniformly for \(t\in I\). Thus the loss parameter is \(\mu=1\).
Lemma~\ref{lem:first-order-energy-commutator} verifies the uniform energy
commutator estimate at every Sobolev level. The conclusion follows from
Theorem~\ref{thm:construction-by-spectral-cutoff} and its quantitative
energy-norm version.
\end{proof}

\begin{corollary}[Dirac-type Hamiltonians]
\label{cor:dirac-type-hamiltonians}
Let \(D\) be a self-adjoint Dirac-type operator on a Hermitian Clifford module
\(E\to M\), and let
\[
V\in C\bigl(I,\Psi^{0}(M,E)\bigr)
\]
be a family of symmetric order-zero operators. Then
\[
H(t)=D+V(t)
\]
satisfies the conclusions of
Theorem~\ref{thm:first-order-pseudodifferential-construction}.
In particular, for every \(r\geq 0\) and every \(\sigma>0\),
\[
\sup_{s,t\in I}
\|U_{N}(t,s)P_{N}u-U(t,s)u\|_{H^{r}}
\leq
C_{I,r,\sigma}(1+N)^{-\sigma}
\|u\|_{H^{r+1+\sigma}}.
\]
\end{corollary}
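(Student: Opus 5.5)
The plan is to reduce the corollary to Theorem~\ref{thm:first-order-pseudodifferential-construction}. To do that, we only need to check that
\[
H(t)=D+V(t)
\]
is a continuous family of symmetric classical first-order pseudodifferential operators, with continuity in the Fréchet topology of \(\Psi^{1}(M,E)\). We keep the reference operator \(H_0\) of that subsection: a positive elliptic self-adjoint operator in \(\Psi^1(M,E)\) with scalar principal symbol. A natural concrete choice is \(H_0=(1+D^2)^{1/2}\). The principal symbol of \(D^2\) is \(|\xi|^2\,\mathrm{Id}_E\) by the Clifford relations, so this \(H_0\) is a classical elliptic positive operator of order one with scalar principal symbol. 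It has compact resolvent, and its Sobolev scale is the standard \(H^r(M,E)\).

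The verification has three steps.

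\textbf{Step 1: \(D\) is admissible.} \(D\) is a first-order differential operator, hence lies in \(\Psi^1(M,E)\), and it is self-adjoint, in particular symmetric on \(C^\infty(M,E)=\Hilb^\infty\). Since \(D\) is constant in \(t\), it is trivially continuous in the Fréchet topology.

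\textbf{Step 2: \(V\) is admissible.} The continuous inclusion \(\Psi^0(M,E)\hookrightarrow\Psi^1(M,E)\) of Fréchet spaces makes \(t\mapsto V(t)\) continuous into \(\Psi^1\). Each \(V(t)\) is symmetric by hypothesis.

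\textbf{Step 3: the sum.} The sum \(H(t)=D+V(t)\) is therefore a continuous family of symmetric classical operators in \(\Psi^1(M,E)\). The hypotheses of Theorem~\ref{thm:first-order-pseudodifferential-construction} hold, and with \(\mu=1\) it yields the construction of \(U(t,s)\), Sobolev preservation, the strong equation on smooth sections, and the stated rate.

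As a sanity check, the commutator bound of Lemma~\ref{lem:first-order-energy-commutator} can also be seen directly here. Since \(\Lambda=1+H_0\) is a function of \(D^2\), we have \([D,\Lambda^{2r}]=0\), and hence
\[
[H(t),\Lambda^{2r}]=[V(t),\Lambda^{2r}]\in\Psi^{2r-1}.
\]
This is even better than needed, and it mirrors the proof of Proposition~\ref{prop:schrodinger-uniform-stability}.

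The only point I expect to need care is whether "classical" and "Fréchet continuous" are meant with respect to the same symbol topology, and whether \(H_0\) must be specifically tied to \(D\). If the earlier theorem allows any admissible \(H_0\), the choice above is unnecessary. All Sobolev scales defined by first-order elliptic positive operators are equivalent, but the constants then change. I would state the result with the fixed \(H_0\) of the subsection, and note that equivalence of scales transfers the estimate.
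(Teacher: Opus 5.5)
Your proposal is correct and follows the paper's route: the paper gives no separate argument and treats the corollary as an immediate application of Theorem~\ref{thm:first-order-pseudodifferential-construction}, once $D+V(t)$ is recognized as a Fr\'echet-continuous family of symmetric classical operators in $\Psi^{1}(M,E)$. Your side remark that $[D,\Lambda^{2r}]=0$ when $\Lambda$ is a function of $D$ is consistent with the reference operator $1+|D|$ listed in the paper's summary table, and it mirrors the Schr\"odinger case.
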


\subsection{Higher-order Hamiltonians compatible with the reference scale}
\label{subsec:higher-order-compatible-hamiltonians}

For a general operator \(H(t)\in\Psi^{m}\) with \(m>1\), the usual symbolic
commutator estimate gives
\[
[H(t),\Lambda^{2r}]
\in
\Psi^{m+2r-1}.
\]
After conjugation by \(\Lambda^{-r}\), this has order \(m-1\), which is
generally positive. Therefore an arbitrary higher-order Hamiltonian does not
automatically satisfy the energy commutator bound used above.

The appropriate additional hypothesis is the following.

\begin{assumption}[Compatibility with the reference scale]
\label{ass:principal-symbol-compatibility}
Let \(H(t)\in\Psi^{m}(M,E)\) be symmetric. We assume that, for every
\(r\geq 0\),
\[
[H(t),\Lambda^{2r}]
\in
\Psi^{2r}(M,E)
\]
uniformly for \(t\in I\).
\end{assumption}

A sufficient symbolic condition is that the positive-order terms in the
commutator vanish. In particular, if \(h_{m}(t,x,\xi)\) and
\(\lambda_{1}(x,\xi)\) denote the principal symbols of \(H(t)\) and
\(\Lambda\), respectively, then the first required cancellation is
\[
\{h_{m}(t),\lambda_{1}\}=0.
\]
For differential Hamiltonians whose principal part is a function of the
reference elliptic operator, this compatibility is natural.

\begin{proposition}[Higher-order compatible Hamiltonians]
\label{prop:higher-order-compatible-hamiltonians}
Let
\[
H\in C\bigl(I,\Psi^{m}(M,E)\bigr)
\]
be a symmetric family satisfying
Assumption~\ref{ass:principal-symbol-compatibility}. Then the spectral cut-off
propagators are uniformly stable on every Sobolev space. The limiting
propagator exists, preserves the Sobolev scale, and satisfies
\[
\sup_{s,t\in I}
\|U_{N}(t,s)P_{N}u-U(t,s)u\|_{H^{r}}
\leq
C_{I,r,\sigma}(1+N)^{-\sigma}
\|u\|_{H^{r+m+\sigma}}
\]
for every \(r\geq 0\), every \(\sigma>0\), and every
\(u\in H^{r+m+\sigma}(M,E)\).
\end{proposition}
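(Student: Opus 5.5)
The plan is to reduce the statement to the abstract constructive result, Theorem~\ref{thm:construction-by-spectral-cutoff}, together with its energy-norm version, Corollary~\ref{cor:quantitative-energy-convergence}, using the loss parameter \(\mu=m\). This mirrors the proof of Theorem~\ref{thm:first-order-pseudodifferential-construction}, where the order-one commutator was supplied by the symbolic calculus. Here the compatibility hypothesis is what replaces that symbolic computation.

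\textbf{Step 1: mapping properties.} Since \(H(t)\in\Psi^m(M,E)\) depends continuously on \(t\) in the Fréchet topology, the symbol seminorms are continuous in \(t\). The standard Sobolev boundedness theorem then shows that \(H(t):H^{r+m}\to H^r\) is norm-continuous for every \(r\geq0\). The scale \(\Hilb^r=D(\Lambda^r)\) coincides with \(H^r(M,E)\) with equivalent norms, so hypothesis (1) of Theorem~\ref{thm:construction-by-spectral-cutoff} holds with \(\mu=m\). Symmetry on \(C^\infty(M,E)=\Hilb^\infty\) is assumed, which gives hypothesis (3).

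\textbf{Step 2: the energy commutator bound.} By Assumption~\ref{ass:principal-symbol-compatibility}, \([H(t),\Lambda^{2r}]\in\Psi^{2r}\) uniformly in \(t\in I\). Conjugating gives \(\Lambda^{-r}[H(t),\Lambda^{2r}]\Lambda^{-r}\in\Psi^0\), with symbol seminorms bounded uniformly on \(I\). By \(L^2\)-boundedness of order-zero operators, exactly as in Lemma~\ref{lem:first-order-energy-commutator}, Assumption~\ref{ass:energy-commutator-bound} then holds at every level \(r\), with \(c_r\) bounded on \(I\). Proposition~\ref{prop:uniform-cutoff-stability} yields uniform stability of \(U_N\) in each \(H^r\), with constant \(\exp(\tfrac12|I|\sup c_r)\) independent of \(N\).

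I expect this step to be the main subtlety, for two reasons. First, "uniformly for \(t\in I\)" has to be read as boundedness of the order-\(2r\) seminorms of the commutator; otherwise \(c_r\) is only pointwise finite. I would make this interpretation explicit. Second, the abstract assumption asks for the form \(\ii(\langle Hv,\Lambda^{2r}v\rangle-\langle\Lambda^{2r}v,Hv\rangle)\). On smooth \(v\) this equals \(\langle v,\ii[H(t),\Lambda^{2r}]v\rangle\) up to sign conventions, using symmetry of \(H(t)\) and self-adjointness of \(\Lambda^{2r}\) on \(C^\infty\), so the identification is harmless.

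\textbf{Step 3: conclusions.} With hypotheses (1)–(3) verified at every level, Theorem~\ref{thm:construction-by-spectral-cutoff} gives three things:
\begin{itemize}
\item existence of the uniform strong limit \(U(t,s)\);
\item its unitarity and strong continuity;
\item preservation of \(\Hilb^\infty\) and the equation on smooth vectors.
\end{itemize}
Preservation of each \(H^r\) follows by passing to the limit in the uniform \(H^r\)-bounds for \(U_N(t,s)P_Nu\), using weak compactness in \(H^r\) and uniqueness of the \(L^2\) limit.

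Finally, Corollary~\ref{cor:quantitative-energy-convergence} applies with \(\mu=m\): the commutator bound holds at levels \(r\) and \(r+m+\sigma\). It gives
\[
\sup_{s,t\in I}\|U_N(t,s)P_Nu-U(t,s)u\|_{H^r}\leq C_{I,r,\sigma}(1+N)^{-\sigma}\|u\|_{H^{r+m+\sigma}},
\]
after converting \(\|\cdot\|_r\) norms to Sobolev norms via the norm equivalence.
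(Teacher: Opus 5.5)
Your proposal is correct and follows essentially the same route as the paper: it obtains the energy commutator bound by conjugating the compatibility assumption into $\Psi^{0}$, reads off $\mu=m$ from the mapping $H(t):H^{r+m}\to H^{r}$, and then invokes Theorem~\ref{thm:construction-by-spectral-cutoff} together with Corollary~\ref{cor:quantitative-energy-convergence}. Your extra details are sound and fill in what the paper leaves implicit: reading ``uniformly in $t$'' as uniform seminorm bounds, and the weak-compactness argument showing that each $H^{r}$ (not only $C^\infty$) is preserved.
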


\begin{proof}
By the compatibility assumption,
\[
\Lambda^{-r}[H(t),\Lambda^{2r}]\Lambda^{-r}
\in
\Psi^{0}(M,E).
\]
Hence the energy commutator estimate holds uniformly on compact time
intervals. Since
\[
H(t):H^{r+m}(M,E)\longrightarrow H^{r}(M,E),
\]
the loss parameter is \(\mu=m\). The conclusion follows from the abstract
construction and quantitative convergence theorems.
\end{proof}

\begin{remark}
\label{rem:schrodinger-compatible-higher-order}
The Schrödinger family
\[
H(t)=\Delta+V(t)
\]
is a basic example of the higher-order compatible situation. Since
\(\Lambda\) is chosen as a function of \(\Delta\),
\[
[\Delta,\Lambda^{2r}]=0,
\]
and only the order-zero potential contributes to the energy commutator.
\end{remark}

\subsection{Quadratic Hamiltonians and harmonic oscillators}

The compact-manifold examples above can be complemented by a standard
non-compact class with compact resolvent: harmonic-oscillator-type
Hamiltonians on \(\mathbb R^d\).

Let
\[
        \Hilb=L^2(\mathbb R^d),
\]
and set
\[
        H_0=1-\Delta+|x|^2.
\]
The operator \(H_0\) is positive self-adjoint and has compact resolvent.
Its Hilbert scale is the standard Hermite-Sobolev scale.

Consider a time-dependent quadratic Hamiltonian
\[
        H(t)
        =
        \frac12 p^TA(t)p
        +
        \frac12 x^TB(t)x
        +
        \frac12\bigl(x^TC(t)p+p^TC(t)^Tx\bigr)
        +
        \ell(t,x,p),
\]
where
\[
        p=-\ii\nabla_x,
\]
the matrices \(A(t)\), \(B(t)\), \(C(t)\) are real and sufficiently
regular in time, and \(\ell(t,x,p)\) is at most linear in \(x\) and
\(p\).  Under the usual symmetry conditions, \(H(t)\) is a symmetric
second-order operator on the Hermite-Sobolev scale.

In this case,
\[
        H(t):\Hilb^{s+1}\to\Hilb^s
\]
if the scale is defined by \(H_0=1-\Delta+|x|^2\), since \(H_0\) itself
has differential order two.  Equivalently, if one uses
\[
        \widetilde H_0=(1-\Delta+|x|^2)^{1/2},
\]
then the loss is \(\mu=2\).

\begin{proposition}
Time-dependent quadratic Hamiltonians on \(\mathbb R^d\), controlled by
the harmonic oscillator, fit the spectral cut-off framework with
respect to the Hermite-Sobolev scale, provided the associated
non-autonomous Schr\"odinger equation is well posed.
\end{proposition}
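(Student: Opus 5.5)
The plan is to check, one by one, the hypotheses of Assumption~\ref{ass:full-dynamics} for the Hermite--Sobolev scale, so that Theorem~\ref{thm:strong-cutoff-convergence} applies. Where well-posedness is not given, I would check the hypotheses of Theorem~\ref{thm:construction-by-spectral-cutoff} instead.

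\emph{Compact resolvent and the scale.} First I would record that $H_0=1-\Delta+|x|^2$ has the Hermite functions as eigenbasis, with eigenvalues $2|\alpha|+d+1\to\infty$. Hence $P_N$ has finite rank and $\Hilb^\infty=\mathcal S(\mathbb R^d)$. I would also use the standard characterization $\|u\|_{s}\simeq\sum_{|\alpha|+|\beta|\le 2s}\|x^\beta\partial^\alpha u\|$ for integer $2s$, with interpolation for general $s$.

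\emph{Mapping property with $\mu=1$.} Each monomial $x_jx_k$, $p_jp_k$, $x_jp_k$ or $p_kx_j$ is a product of two creation/annihilation operators $a_j^{\#}$. Each $a_j^{\#}$ maps $\Hilb^{s+1/2}\to\Hilb^{s}$, because it shifts the Hermite level by one and picks up a factor of order $(2|\alpha|+d)^{1/2}$. Therefore every quadratic monomial maps $\Hilb^{s+1}\to\Hilb^s$, and the linear part $\ell$ maps $\Hilb^{s+1/2}\to\Hilb^s$. Continuity of $t\mapsto H(t)$ in $\mathcal L(\Hilb^{s+1},\Hilb^s)$ then follows directly from continuity of $A,B,C$ and of the coefficients of $\ell$. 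This gives items (i)--(ii) of Assumption~\ref{ass:full-dynamics}.

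\emph{Propagator regularity and the energy commutator.} Well-posedness gives $U(t,s)$. To obtain (iii)--(iv), and also the uniform stability needed for the rates, I would verify Assumption~\ref{ass:energy-commutator-bound} at every level $r$. This is the main obstacle. For integer $r$ the commutator can be computed in the Weyl/Hermite calculus. $H(t)$ is a quadratic polynomial in $(x,p)$ and $\Lambda^{2r}$ is a polynomial of degree $4r$ in $(x,p)$. The commutator $[H(t),\Lambda^{2r}]$ is therefore $-\ii$ times the Weyl quantization of the Poisson bracket, exactly, because Moyal corrections vanish for a quadratic symbol. That bracket is a polynomial of degree $4r$, so $\Lambda^{-r}[H(t),\Lambda^{2r}]\Lambda^{-r}$ is bounded on $L^2$ with norm controlled by $|A(t)|+|B(t)|+|C(t)|+|\ell(t)|$. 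For non-integer $r$ I would treat $H(t)$ as an order-two operator in the Shubin class $G^2$ and $\Lambda^{2r}\in G^{4r}$. The commutator then lies in $G^{4r}$, which equals order $2r$ on the $\Lambda$-scale, and Shubin's calculus gives boundedness after conjugation. In both cases $c_r$ is locally bounded. Proposition~\ref{prop:uniform-cutoff-stability} then yields $N$-uniform stability on every $\Hilb^r$.

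\emph{Conclusion.} With this stability, Theorem~\ref{thm:construction-by-spectral-cutoff} (with $\mu=1$) constructs a propagator that preserves $\Hilb^\infty$. By uniqueness of the well-posed evolution it coincides with the given one. This supplies (iii)--(iv), so Theorem~\ref{thm:strong-cutoff-convergence}, Theorem~\ref{thm:quantitative-cutoff-convergence} and, under Hölder continuity in time, Theorem~\ref{thm:joint-spectral-time-slicing} all apply.

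The delicate point in the whole argument is the commutator step. The exactness of the Weyl symbolic calculus for quadratic symbols is precisely what prevents the loss of order $m-1=1$ that the remark on higher-order Hamiltonians warns about. Without it, the conjugated commutator would not be bounded on $L^2$.
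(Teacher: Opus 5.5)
Your argument is correct, but it takes a different and considerably stronger route than the paper's.

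\textbf{The paper's route.} The paper's proof only checks the mapping property $H(t):\Hilb^{s+1}\to\Hilb^{s}$ (so $\mu=1$) and the finite rank of the Hermite spectral projections. It then appeals to Theorem~\ref{thm:strong-cutoff-convergence}. Items (iii)--(iv) of Assumption~\ref{ass:full-dynamics} are never verified. These are the invariance of $\Hilb^1$ under the propagator, with continuity, and the strong-solution property. They are silently absorbed into the proviso ``well posed''.

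\textbf{Your route.} You verify the energy commutator bound at every level of the scale and run Theorem~\ref{thm:construction-by-spectral-cutoff}. As in Theorem~\ref{thm:schrodinger-construction-rate}, this produces the propagator itself, its invariance of the Hermite--Sobolev scale, and the rate $(1+N)^{-\sigma}\|u\|_{1+\sigma}$.

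\textbf{What each buys.} Your approach gives an essentially unconditional statement. Well-posedness is only used to identify your limit with a propagator given in advance, via the usual energy-uniqueness argument for strong solutions of a symmetric $H(t)$. The price is the isotropic (Shubin) pseudodifferential calculus, including complex powers of $\Lambda$ for non-integer $r$. The paper's proof avoids this entirely, but only by leaving the real analytic content inside the hypothesis.

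\textbf{A correction to your closing remark.} The exactness of the Weyl calculus for quadratic symbols is not what prevents the loss of derivatives. In Shubin's isotropic calculus a commutator gains two orders for arbitrary symbols. Hence $[H(t),\Lambda^{2r}]\in G^{4r}$ for any $H(t)\in G^{2}$, quadratic or not, and your own non-integer-$r$ argument already uses exactly this. Relative to the $\Lambda$-scale, $H(t)$ has order $\mu=1$. The situation is therefore the analogue of the first-order case, Lemma~\ref{lem:first-order-energy-commutator}. The warning about $m>1$ in Subsection~\ref{subsec:higher-order-compatible-hamiltonians} simply does not apply. The vanishing of the higher Moyal terms is just a convenient way to carry out the integer case by hand.
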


\begin{proof}
Polynomial expressions of degree at most two in \(x\) and \(p\) are
controlled by the harmonic oscillator.  More precisely, each such
operator maps the Hermite-Sobolev space of order \(s+1\) associated
with \(H_0=1-\Delta+|x|^2\) continuously to the space of order \(s\).
The spectral projections of \(H_0\) have finite rank because the
harmonic oscillator has discrete spectrum with finite multiplicities.
The conclusion follows from the abstract convergence theorem.
\end{proof}

\begin{remark}
Quadratic Hamiltonians are especially useful for comparison with the
usual explicit Feynman integral formulae, because their propagators are
metaplectic operators and their phases are quadratic actions.
\end{remark}

\subsection{Log-polyhomogeneous pseudodifferential Hamiltonians}

We now consider a class which is important for renormalized traces.
Let \(M\) be compact and let
\[
        H(t)\in \Psi_{\log}^{m,k}(M,E)
\]
be a family of log-polyhomogeneous pseudodifferential operators.  In
local coordinates, this means that the symbol has an asymptotic
expansion of the form
\[
        a(t,x,\xi)
        \sim
        \sum_{j=0}^{\infty}
        \sum_{\ell=0}^{k_j}
        a_{m-j,\ell}(t,x,\xi)(\log|\xi|)^\ell,
\]
where \(a_{m-j,\ell}\) is homogeneous of degree \(m-j\) in \(\xi\) for
large \(|\xi|\).

The logarithmic factors do not change the principal power order, but
they affect the exact Sobolev mapping estimates.  A convenient
formulation is the following: for every \(\varepsilon>0\),
\[
        H(t):H^{s+m+\varepsilon}(M,E)\to H^s(M,E).
\]
Equivalently, one may work with logarithmic Sobolev spaces and keep the
loss exactly equal to \(m\).

\begin{proposition}
Let \(H(t)\) be a continuous family of symmetric log-polyhomogeneous
pseudodifferential operators of order \(m\) on a compact manifold.  Then
for every \(\varepsilon>0\),
\[
        H(t):H^{s+m+\varepsilon}(M,E)\to H^s(M,E)
\]
continuously and uniformly for \(t\) in compact intervals.  Therefore
the spectral cut-off convergence theorem applies with loss
\[
        \mu=m+\varepsilon
\]
for arbitrary \(\varepsilon>0\), assuming well-posedness of the full
evolution.
\end{proposition}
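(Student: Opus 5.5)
The proposition has two parts. The first is the mapping property $H(t):H^{s+m+\varepsilon}\to H^s$, uniformly for $t$ in compact intervals. The second is the applicability of Theorem~\ref{thm:strong-cutoff-convergence} with $\mu=m+\varepsilon$. I will prove the mapping property by symbolic reduction to the classical calculus and then check the items of Assumption~\ref{ass:full-dynamics} one by one.

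\textbf{Step 1: symbol estimates.} For every $\varepsilon>0$ and $\ell\le k$ we have $(\log\langle\xi\rangle)^\ell\le C_{\varepsilon,\ell}\langle\xi\rangle^{\varepsilon}$. Hence each term $a_{m-j,\ell}(\log|\xi|)^\ell$, after cutting off near $\xi=0$, lies in the H\"ormander class $S^{m-j+\varepsilon}_{1,0}$. The same holds for its derivatives: each $\xi$-derivative of $(\log|\xi|)^\ell$ produces $|\xi|^{-1}$ times lower powers of the logarithm, so all derivative estimates are preserved. The asymptotic sum and the smoothing remainder therefore show
\[
\Psi^{m,k}_{\log}(M,E)\subset \Psi^{m+\varepsilon}_{1,0}(M,E),
\]
and the inclusion is continuous for the natural Fr\'echet topologies. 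Concretely, the seminorms of the full symbol in $S^{m+\varepsilon}_{1,0}$ are bounded by finitely many log-polyhomogeneous seminorms.

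\textbf{Step 2: mapping property.} The standard Sobolev continuity of $\Psi^{m+\varepsilon}_{1,0}$ gives
\[
\|H(t)u\|_{H^s}\le C\,p(H(t))\,\|u\|_{H^{s+m+\varepsilon}},
\]
where $p$ is a finite combination of symbol seminorms. Continuity of $t\mapsto H(t)$ in the log-polyhomogeneous topology, combined with Step~1 and linearity, gives norm continuity $I\to\mathcal L(H^{s+m+\varepsilon},H^s)$. Compactness of $I$ then gives a uniform bound. On the $H_0$-scale, which is equivalent to the Sobolev scale for an elliptic first-order reference operator, this is exactly items (i) and (ii) of Assumption~\ref{ass:full-dynamics} with $\mu=m+\varepsilon$. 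If $H_0$ has order $d$ instead, I will rescale the loss by $1/d$.

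\textbf{Step 3: the remaining hypotheses.} Items (iii) and (iv) of Assumption~\ref{ass:full-dynamics} (existence of the propagator, preservation of $\Hilb^\mu$, strong solutions) are precisely the well-posedness that the statement assumes. Once those hold, Theorem~\ref{thm:strong-cutoff-convergence} applies verbatim.

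\textbf{Main obstacle.} The delicate point is Step~1: checking that the logarithmic factors cost only $\langle\xi\rangle^{\varepsilon}$ in every seminorm, including derivatives and the remainder of the asymptotic expansion. The remainder after $J$ terms lies in $S^{m-J+\varepsilon}$ by the same argument. I will handle this by the elementary bound $|\partial_\xi^\beta (\log|\xi|)^\ell|\le C|\xi|^{-|\beta|}(1+\log|\xi|)^{\ell}$ and then absorb $(1+\log|\xi|)^\ell$ into $\langle\xi\rangle^{\varepsilon}$.
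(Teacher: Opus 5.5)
Your proposal is correct and follows the same route as the paper. Both absorb the logarithmic factors into $\langle\xi\rangle^{\varepsilon}$, apply the standard Sobolev mapping theorem with uniformity in $t$ coming from Fr\'echet continuity of the symbol family, and then invoke Theorem~\ref{thm:strong-cutoff-convergence} with items (iii)--(iv) supplied by the assumed well-posedness; your Step~1 is in fact more precise than the paper's one-line argument, since you check the derivative estimates and place the operators in $\Psi^{m+\varepsilon}_{1,0}$ rather than saying the symbol is ``bounded by a classical symbol of order $m+\varepsilon$''.
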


\begin{proof}
Logarithmic powers satisfy, for every \(\varepsilon>0\), an estimate of
the form
\[
        (\log\langle \xi\rangle)^k
        \leq
        C_{\varepsilon,k}\langle \xi\rangle^\varepsilon.
\]
Thus a log-polyhomogeneous symbol of order \(m\) is bounded by a
classical symbol of order \(m+\varepsilon\).  The standard
pseudodifferential mapping theorem then gives
\[
        H(t):H^{s+m+\varepsilon}(M,E)\to H^s(M,E).
\]
Uniformity in \(t\) follows from continuity of the symbol family in the
log-polyhomogeneous Fréchet topology.  The conclusion follows from the
abstract theorem.
\end{proof}

\begin{remark}
This class is not needed for the basic construction of the solution,
but it becomes natural when one studies regularized traces.  In the
log-polyhomogeneous calculus, zeta functions may have higher-order
poles, and residue-type functionals are richer than in the classical
case.
\end{remark}

\subsection{Smoothing and negative-order Hamiltonians}

Let
\[
        H(t)\in \Psi^{-\rho}(M,E),
        \qquad \rho>0,
\]
be a family of symmetric negative-order pseudodifferential operators on
a compact manifold.  Then
\[
        H(t):H^s(M,E)\to H^{s+\rho}(M,E).
\]
In particular,
\[
        H(t):H^s(M,E)\to H^s(M,E)
\]
is bounded for every \(s\).  Thus one may take
\[
        \mu=0.
\]

\begin{proposition}
A continuous family of symmetric pseudodifferential operators of
negative order satisfies the cut-off convergence assumptions with no
loss of derivatives, provided the corresponding unitary propagator is
well defined.
\end{proposition}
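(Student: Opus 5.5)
The plan is to verify Assumption~\ref{ass:full-dynamics} with \(\mu=0\) and then invoke Theorem~\ref{thm:strong-cutoff-convergence}. The commutator route of Theorem~\ref{thm:construction-by-spectral-cutoff} will serve as a complement showing that well-posedness is in fact automatic here.

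\emph{Mapping properties.} By the standard mapping theorem for pseudodifferential operators on a compact manifold, a symmetric \(H(t)\in\Psi^{-\rho}(M,E)\) is bounded \(H^s\to H^{s+\rho}\hookrightarrow H^s\) for every \(s\). The operator norm is controlled by finitely many symbol seminorms. Continuity of \(t\mapsto H(t)\) in the Fréchet topology of \(\Psi^{-\rho}\) therefore gives norm continuity \(I\to\mathcal L(\Hilb^s)\) for all \(s\), in particular \(I\to\mathcal L(\Hilb^0,\Hilb)\). This is items (i) and (ii) with \(\mu=0\).

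\emph{Regularity of the propagator.} Since \(H(t)\) is a norm-continuous family of bounded self-adjoint operators, the propagator is the Dyson series
\[
U(t,s)=\sum_{k\ge0}(-\ii)^k\int_{s\le\tau_k\le\cdots\le\tau_1\le t}H(\tau_1)\cdots H(\tau_k)\,d\tau ,
\]
which converges in \(\mathcal L(\Hilb)\) and in fact in every \(\mathcal L(\Hilb^s)\). Its norm there is at most \(\exp(|t-s|\sup_\tau\|H(\tau)\|_{\mathcal L(\Hilb^s)})\). The series is norm continuous in \((t,s)\) and differentiable in \(t\), with \(\ii\partial_tU(t,s)=H(t)U(t,s)\), and unitarity follows from symmetry. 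This yields items (iii) and (iv), so Theorem~\ref{thm:strong-cutoff-convergence} applies and gives strong convergence for every \(u\in\Hilb\).

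\emph{Quantitative rate.} Since \(C_{U,\sigma}<\infty\), Theorem~\ref{thm:quantitative-cutoff-convergence} with \(\mu=0\) gives the rate \((1+N)^{-\sigma}\|u\|_\sigma\). As a cross-check, the commutator condition holds trivially: \(\Lambda^{-r}[H(t),\Lambda^{2r}]\Lambda^{-r}\in\Psi^{-\rho}\subset\mathcal L(L^2)\), so Proposition~\ref{prop:uniform-cutoff-stability} applies as well.

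The main point needing care is the interpretation of ``continuous family''. If continuity is only strong, norm continuity in \(\mathcal L(\Hilb^s)\) may fail, and I would instead assume continuity in the Fréchet topology, as in the earlier examples. Otherwise every step reduces to the bounded-generator theory, and the proviso in the statement about well-posedness becomes automatic.
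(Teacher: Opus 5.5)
Your proof is correct and follows the paper's argument. The mapping theorem for $\Psi^{-\rho}(M,E)$ gives $H(t)\in\mathcal L(\Hilb^s)$ uniformly in $t$, and norm-continuously under Fr\'echet continuity, so the convergence theorem applies with $\mu=0$. The paper stops there and keeps well-posedness as a hypothesis. Your Dyson-series step goes further: it shows that a norm-continuous family of bounded self-adjoint generators automatically has a propagator preserving every $\Hilb^s$, with bound $\exp(|t-s|\sup_\tau\|H(\tau)\|_{\mathcal L(\Hilb^s)})$. So the proviso in the statement is redundant, and the rate of Theorem~\ref{thm:quantitative-cutoff-convergence} follows as well.
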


\begin{proof}
Negative-order pseudodifferential operators are bounded on every
Sobolev space and even regularizing by \(\rho\) derivatives.  Therefore
the mapping estimate
\[
        H(t):\Hilb^s\to\Hilb^s
\]
holds uniformly for \(t\in I\).  The convergence theorem applies with
\(\mu=0\).
\end{proof}

\begin{remark}
If the order is less than \(-\dim M\), then such operators are
trace-class.  These examples are useful as test cases for comparing
ordinary traces with cut-off or renormalized traces.
\end{remark}

\subsection{Abstract Kato--Rellich perturbations}

Finally, we record an abstract class which does not rely on a manifold.

Let \(A\) be a positive self-adjoint operator with compact resolvent on
\(\Hilb\), and set
\[
        H_0=1+A.
\]
Suppose that
\[
        H(t)=A+V(t),
\]
where \(V(t)\) is symmetric and \(A\)-bounded with relative bound
strictly less than \(1\), uniformly for \(t\in I\).  That is, there
exist constants \(a<1\) and \(b\geq0\) such that
\[
        \|V(t)u\|
        \leq
        a\|Au\|+b\|u\|
\]
for all \(u\in D(A)\) and all \(t\in I\).

By the Kato--Rellich theorem, each \(H(t)=A+V(t)\) is self-adjoint on
the common domain \(D(A)\).  If \(t\mapsto V(t)\) is sufficiently
regular in the appropriate operator topology, the non-autonomous
Schr\"odinger equation is well posed by the standard theory of
time-dependent self-adjoint Hamiltonians.

\begin{proposition}
Let \(A\) be positive self-adjoint with compact resolvent, and let
\[
        H(t)=A+V(t)
\]
be a uniformly relatively \(A\)-bounded symmetric perturbation with
relative bound \(<1\).  Assume that \(t\mapsto V(t)\) is regular enough
to generate a unitary non-autonomous propagator preserving the scale of
\(A\).  Then the spectral cut-off construction applies with respect to
\(H_0=1+A\).
\end{proposition}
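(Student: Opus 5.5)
The plan is to verify Assumption~\ref{ass:full-dynamics} with \(\mu=1\) relative to the reference operator \(H_0=1+A\), and then invoke Theorem~\ref{thm:strong-cutoff-convergence}; if the propagator is moreover bounded on \(\Hilb^{1+\sigma}\), Theorem~\ref{thm:quantitative-cutoff-convergence} gives the rate as well. Since \(H_0=1+A\geq 1\) has compact resolvent, the projections \(P_N\) have finite rank and the scale \(\Hilb^s=D((1+H_0)^s)=D((2+A)^s)\) is well defined. In particular \(\Hilb^1=D(A)\) with equivalent norms.

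First, I will check items (i) and (ii). For \(u\in D(A)\), the relative bound gives
\[
\|H(t)u\|\leq \|Au\|+\|V(t)u\|\leq (1+a)\|Au\|+b\|u\|\leq C\|u\|_1 ,
\]
uniformly in \(t\in I\). So \(H(t)\in\mathcal L(\Hilb^1,\Hilb)\) with a uniform bound. Norm continuity of \(t\mapsto H(t)\) in \(\mathcal L(\Hilb^1,\Hilb)\) is equivalent to continuity of \(t\mapsto V(t)\) in \(\mathcal L(D(A),\Hilb)\), because the \(A\)-part does not depend on \(t\). I will read the phrase ``regular enough'' in the statement as including this continuity; it is the standard hypothesis in Kato's theory for time-dependent self-adjoint Hamiltonians with constant domain \(D(A)\). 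Symmetry of \(H(t)\) on \(\Hilb^\infty\) is immediate, since \(A\) is self-adjoint and \(V(t)\) is symmetric.

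Second, I will check items (iii) and (iv). The hypothesis that the propagator preserves the scale of \(A\) gives \(U(t,s)\Hilb^1\subset\Hilb^1\). Kato's constant-domain theorem, which is where the Kato--Rellich self-adjointness on \(D(A)\) is used, gives two further facts. For \(u\in D(A)\), the map \((t,s)\mapsto U(t,s)u\) is continuous into \(D(A)\) with the graph norm. It is also strongly differentiable in \(\Hilb\), with \(\ii\partial_tU(t,s)u=H(t)U(t,s)u\). These are exactly items (iii) and (iv) for \(\mu=1\).

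With Assumption~\ref{ass:full-dynamics} in place, Theorem~\ref{thm:strong-cutoff-convergence} yields \(U_N(t,s)P_Nu\to U(t,s)u\) uniformly on \(I\times I\) for \(u\in\Hilb^1\). Because the \(U_N(t,s)P_N\) are uniformly bounded (unitary on \(\Hilb_N\), composed with a contraction) and \(\Hilb^1\) is dense, this extends by an \(\varepsilon/3\) argument to all \(u\in\Hilb\). Corollary~\ref{cor:state-sum-construction} then transfers the conclusion to the time-sliced state sums.

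The main obstacle is item (iii): continuity of \((t,s)\mapsto U(t,s)u\) in the \(\Hilb^1\)-norm, rather than mere preservation of \(\Hilb^1\). I will not try to derive it from the Kato--Rellich bound alone. Instead I will take it from Kato's theorem under a \(C^1\) or bounded-variation hypothesis on \(t\mapsto V(t)(A+1)^{-1}\), and state that hypothesis explicitly as the meaning of ``regular enough''.
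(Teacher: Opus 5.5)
Your proposal is correct and follows the paper's argument. You identify \(\Hilb^1=D(A)\), use the relative bound to get \(H(t)\in\mathcal L(\Hilb^1,\Hilb)\) uniformly so that \(\mu=1\), note that the \(P_N\) have finite rank, and apply the abstract convergence theorem. You are more careful than the paper, whose proof checks only the uniform bound and leaves the norm continuity of item (ii), the \(\Hilb^1\)-continuity of the orbits in item (iii), and the extension of the convergence from \(\Hilb^1\) to all of \(\Hilb\) to the ``regular enough'' hypothesis or unstated; you make these explicit through Kato's constant-domain theorem and a density argument.
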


\begin{proof}
Since \(H_0=1+A\), the space \(\Hilb^1\) is \(D(A)\), up to equivalent
norms.  The estimate
\[
        \|V(t)u\|
        \leq
        a\|Au\|+b\|u\|
\]
implies that
\[
        H(t)=A+V(t):\Hilb^1\to\Hilb
\]
is bounded uniformly for \(t\in I\).  Hence the abstract mapping
assumption holds with \(\mu=1\), relative to the scale generated by
\(H_0=1+A\).  Compactness of the resolvent gives finite-dimensional
spectral cut-offs.  The convergence theorem applies once the
well-posedness and stability of the full propagator are assumed.
\end{proof}

\begin{remark}
This abstract class is useful because it includes many concrete
Hamiltonians without using pseudodifferential calculus explicitly.  It
also clarifies the role of the reference operator \(H_0\): the cut-off
is not tied to a coordinate representation, but to an energy scale.
\end{remark}

\subsection{Summary of the examples}

The examples above may be summarized as follows.

\[
\begin{array}{c|c|c}
\text{Class of Hamiltonians}
&
\text{Natural reference operator}
&
\text{Typical loss}
\\ \hline
-\Delta+V(t)
&
(1+\Delta)^{1/2}
&
\mu=2
\\
\text{Differential operators of order }m
&
(1+\Delta_E)^{1/2}
&
\mu=m
\\
\Psi_{\mathrm{cl}}^m(M,E)
&
(1+\Delta_E)^{1/2}
&
\mu=m
\\
D+V(t)
&
1+|D|
&
\mu=1
\\
\text{Quadratic Hamiltonians on }\mathbb R^d
&
1-\Delta+|x|^2
&
\mu=1
\\
\Psi_{\log}^{m,k}(M,E)
&
(1+\Delta_E)^{1/2}
&
\mu=m+\varepsilon
\\
\Psi^{-\rho}(M,E)
&
(1+\Delta_E)^{1/2}
&
\mu=0
\\
A+V(t)\text{, Kato--Rellich}
&
1+A
&
\mu=1
\end{array}
\]

All these classes are compatible with the spectral cut-off construction
provided the corresponding non-autonomous Hamiltonian equation is
well posed and the full propagator preserves the required Hilbert
scale.  The examples differ mainly in how the mapping estimates and the
energy stability estimates are proved: by elliptic regularity for
differential operators, by symbolic calculus for pseudodifferential
operators, by the Hermite calculus for quadratic Hamiltonians, and by
relative boundedness estimates in the abstract Kato--Rellich setting.
\appendix

\section{Effective Hamiltonians in periodically driven quantum systems}
\label{app:effective-hamiltonians}

The purpose of this appendix is to recall the role of effective
Hamiltonians in periodically driven quantum systems, and to explain why
they naturally appear in connection with the spectral cut-off procedure
developed in the main body of the paper.  No result of this appendix is
needed for the proof of the convergence theorem.  Rather, the appendix
provides the physical and mathematical background for the discussion of
periodic Hamiltonians, Floquet--Magnus expansions, and cut-off dynamics.

\subsection{Non-autonomous Schr\"odinger equations}

Let \(\Hilb\) be a complex Hilbert space.  A time-dependent quantum system
is described, at least formally, by a non-autonomous Schr\"odinger
equation
\[
        \ii \partial_t u(t)=H(t)u(t),
        \qquad u(s)=u_s,
\]
where \(H(t)\) is a family of self-adjoint, or essentially self-adjoint,
operators on \(\Hilb\).  If the equation is well posed, it generates a
two-parameter unitary propagator
\[
        U(t,s):\Hilb\to\Hilb,
\]
satisfying
\[
        U(t,r)U(r,s)=U(t,s),
        \qquad
        U(s,s)=\Id,
\]
and, on a suitable common domain,
\[
        \ii \partial_t U(t,s)u=H(t)U(t,s)u.
\]
In the autonomous case \(H(t)=H\), the propagator is simply
\[
        U(t,s)=\ee^{-\ii(t-s)H}.
\]
For non-autonomous Hamiltonians, no such simple exponential formula is
available in general.  One often writes formally
\[
        U(t,s)
        =
        \mathcal T
        \exp\left(
        -\ii\int_s^t H(\tau)\dd \tau
        \right),
\]
where \(\mathcal T\) denotes time ordering.  This notation is useful in
physics, but it hides serious domain questions when the operators
\(H(t)\) are unbounded.

The main construction of this paper replaces this formal object by
finite-dimensional spectral cut-offs.  If \(H_0\) is a positive
self-adjoint reference operator and
\[
        P_N=\mathbf 1_{[0,N]}(H_0),
\]
then the cut-off Hamiltonians
\[
        H_N(t)=P_NH(t)P_N
\]
act on the finite-dimensional spaces \(P_N\Hilb\), when \(H_0\) has
compact resolvent.  Their propagators \(U_N(t,s)\) are ordinary
finite-dimensional unitary matrices, and may be represented by
time-sliced oscillatory integrals.  The analytical problem is then to
prove
\[
        U_N(t,s)P_Nu \longrightarrow U(t,s)u
\]
strongly, uniformly for \(t,s\) in compact intervals.

\subsection{Periodic Hamiltonians and the Floquet problem}

Suppose now that the Hamiltonian is periodic in time.  More precisely,
let \(T>0\) and assume
\[
        H(t+T)=H(t).
\]
The propagator over one period,
\[
        U(T,0),
\]
is called the monodromy operator.  In finite dimension, Floquet theory
implies that, after choosing a branch of the logarithm, one may write
\[
        U(T,0)=\ee^{-\ii T H_F},
\]
where \(H_F\) is a time-independent operator, called a Floquet
Hamiltonian.  It is not unique, because the logarithm of a unitary
operator is not unique.  Nevertheless, once a branch has been chosen,
the dynamics at integer multiples of the period is described by
\[
        U(qT,0)=\ee^{-\ii qT H_F},
        \qquad q\in\mathbb Z.
\]
Thus the periodically driven system admits an autonomous description at
stroboscopic times.

In applications, one is often interested in the high-frequency regime,
that is, in the limit \(T\to0\).  It is then convenient to start from a
\(1\)-periodic Hamiltonian \(H(\tau)\) and define the \(T\)-periodic
rescaling
\[
        H^{(T)}(t)=H(t/T).
\]
The corresponding equation is
\[
        \ii\partial_t u(t)=H^{(T)}(t)u(t).
\]
The central question is whether the true dynamics may be approximated,
for small \(T\), by an autonomous evolution
\[
        \ee^{-\ii t H_{\mathrm{eff}}^{(T)}},
\]
where \(H_{\mathrm{eff}}^{(T)}\) is an effective Hamiltonian admitting an
asymptotic expansion in powers of \(T\).

\subsection{The Floquet--Magnus expansion}

In finite dimension, and more generally for bounded operators under
suitable convergence conditions, the effective Hamiltonian may be
constructed from the Magnus expansion.  The propagator is written in the
form
\[
        U^{(T)}(t,0)=\ee^{\Omega^{(T)}(t)},
\]
where \(\Omega^{(T)}(t)\) is expanded as a series of iterated integrals
and commutators.  At one period, one defines
\[
        H_{\mathrm{FM}}^{(T)}
        =
        \frac{\ii}{T}\Omega^{(T)}(T).
\]
The first terms are
\[
        H_{\mathrm{FM}}^{(T)}
        \sim
        H_{\mathrm{FM},0}^{(T)}
        +
        H_{\mathrm{FM},1}^{(T)}
        +
        H_{\mathrm{FM},2}^{(T)}
        +\cdots ,
\]
with
\[
        H_{\mathrm{FM},0}^{(T)}
        =
        \frac1T\int_0^T H^{(T)}(t_1)\dd t_1,
\]
and
\[
        H_{\mathrm{FM},1}^{(T)}
        =
        \frac{\ii}{2T}
        \int_0^T
        \int_0^{t_1}
        [H^{(T)}(t_1),H^{(T)}(t_2)]
        \dd t_2\,\dd t_1.
\]
Higher-order terms involve nested commutators of increasing length.
Equivalently, after the change of variable \(t=T\tau\), the expansion
takes the form
\[
        H_{\mathrm{FM}}^{(T)}
        \sim
        H_{\mathrm{eff}}^{[0]}
        +
        T H_{\mathrm{eff}}^{[1]}
        +
        T^2 H_{\mathrm{eff}}^{[2]}
        +\cdots .
\]
The leading term is the time average
\[
        H_{\mathrm{eff}}^{[0]}
        =
        \int_0^1 H(\tau)\dd\tau.
\]
The next term is proportional to the averaged commutator
\[
        H_{\mathrm{eff}}^{[1]}
        =
        \frac{\ii}{2}
        \int_0^1
        \int_0^{\tau_1}
        [H(\tau_1),H(\tau_2)]
        \dd\tau_2\,\dd\tau_1.
\]
These formulae explain why effective Hamiltonians are sensitive not
only to the average of the driving, but also to the non-commutativity of
the Hamiltonian at different times.

\begin{remark}
The appearance of commutators is one of the reasons why effective
Hamiltonians are useful in physics.  A rapidly driven system may acquire
effective interactions which are absent from the instantaneous
Hamiltonian itself.  This mechanism is one of the basic principles of
Floquet engineering.
\end{remark}

\subsection{Why the unbounded case is delicate}

For bounded operators, the above formulae are meaningful as identities
or asymptotic expansions in operator norm, at least under suitable
smallness assumptions.  For unbounded Hamiltonians, several new
difficulties appear.

First, products and commutators such as
\[
        H(t_1)H(t_2),
        \qquad
        [H(t_1),H(t_2)]
\]
need not have a common dense domain.  Even when they are defined on a
common core, it is not automatic that the resulting operators are
symmetric, essentially self-adjoint, or closable.

Second, the logarithm of the monodromy operator \(U(T,0)\) may be a
self-adjoint operator, but this does not imply that the formal
Floquet--Magnus coefficients define self-adjoint operators.

Third, convergence of the full Magnus series is not the correct issue
for many applications.  One often needs only finite-order effective
Hamiltonians, together with estimates showing that the associated
autonomous dynamics approximates the true propagator in the
high-frequency regime.

Recent work on the Floquet--Magnus expansion for unbounded operators
addresses precisely these problems.  The strategy is to work on a space
of smooth vectors associated with a reference self-adjoint operator
\(H_0\),
\[
        C^\infty(H_0)=\bigcap_{m\geq1}D(H_0^m),
\]
and to impose estimates showing that both \(H(t)\) and the propagator
are controlled with respect to the \(H_0\)-scale.  Under additional
spectral and symmetry assumptions, finite-order effective Hamiltonians
can be defined on \(C^\infty(H_0)\), shown to coincide with the
Floquet--Magnus coefficients, and proved to admit self-adjoint
extensions; see, for example,
\cite{BurgarthHillierLonigroRichter2026}.

\subsection{Spectral cut-offs and effective Hamiltonians}

The spectral cut-off approach gives a natural way of connecting
finite-dimensional Floquet theory with unbounded Hamiltonian dynamics.
Let
\[
        P_N=\mathbf 1_{[0,N]}(H_0)
\]
and define
\[
        H_N^{(T)}(t)=P_NH^{(T)}(t)P_N.
\]
On the finite-dimensional space \(P_N\Hilb\), the propagator
\(U_N^{(T)}(t,s)\) is generated by a bounded matrix-valued Hamiltonian.
Therefore the usual finite-dimensional Floquet--Magnus construction
applies.  One obtains a cut-off effective Hamiltonian
\[
        H_{\mathrm{FM},L,N}^{(T)}
\]
at every finite order \(L\).

The guiding question is whether these cut-off effective Hamiltonians
approximate, in a suitable sense, an effective Hamiltonian for the
unbounded dynamics.  One expects convergence of the form
\[
        H_{\mathrm{FM},L,N}^{(T)}P_Nu
        \longrightarrow
        H_{\mathrm{FM},L}^{(T)}u,
        \qquad
        u\in C^\infty(H_0),
\]
or, more cautiously, convergence in the sense of quadratic forms or in
the topology of an \(H_0\)-Sobolev scale.  The exact topology depends on
the order of the Hamiltonians and on the strength of the estimates
available for the commutators.

At the level of propagators, the expected compatibility statement is
\[
        \ee^{-\ii qT H_{\mathrm{FM},L,N}^{(T)}}P_Nu
        \longrightarrow
        \ee^{-\ii qT \widehat H_{\mathrm{FM},L}^{(T)}}u
\]
for stroboscopic times \(t=qT\), whenever
\(\widehat H_{\mathrm{FM},L}^{(T)}\) is a self-adjoint extension of the
formal effective Hamiltonian.  This should be compared with the
convergence of the exact cut-off dynamics
\[
        U_N^{(T)}(qT,0)P_Nu
        \longrightarrow
        U^{(T)}(qT,0)u.
\]
Thus the spectral cut-off construction provides a bridge between three
objects:
\[
\begin{array}{ccc}
\text{cut-off oscillatory integrals}
& \longrightarrow &
\text{cut-off propagators}
\\[1mm]
&& \downarrow
\\[-1mm]
&&
\text{cut-off effective Hamiltonians}
\\[1mm]
&& \downarrow N\to\infty
\\[-1mm]
&&
\text{unbounded effective Hamiltonians}.
\end{array}
\]

\subsection{Relation with oscillatory integrals}

For each fixed cut-off \(N\), the propagator \(U_N(t,s)\) may be
represented by a time-slicing procedure.  If
\[
        s=t_0<t_1<\cdots<t_M=t,
\]
then
\[
        U_{N,M}(t,s)
        =
        \prod_{j=M-1}^{0}
        \exp\left(
        -\ii(t_{j+1}-t_j)H_N(t_j)
        \right)
\]
converges to \(U_N(t,s)\) as the mesh of the partition tends to zero.
After choosing coordinates on \(P_N\Hilb\) and inserting
finite-dimensional resolutions of the identity, this product can be
written as a finite-dimensional oscillatory integral
\[
        I_{N,M}(t,s)u
        =
        \int
        \ee^{\ii S_{N,M}}
        A_{N,M}
        u_N
        ,
\]
where \(u_N=P_Nu\).  The phase \(S_{N,M}\) is a discrete action and
\(A_{N,M}\) is the corresponding amplitude.

The construction used in the present paper may therefore be understood
as a two-step replacement of the formal real-time Feynman integral:
\[
        \int \ee^{\ii S[q]}\mathcal Dq.
\]
First, the spectral projection \(P_N\) produces a genuine
finite-dimensional oscillatory integral.  Second, the limit
\(N\to\infty\) is taken in the strong topology of the Hilbert space, or
in a stronger topology associated with the reference operator \(H_0\).
Thus no infinite-dimensional oscillatory measure is postulated.

In the periodic case, these cut-off oscillatory integrals also carry
the finite-dimensional Floquet--Magnus structure.  Consequently, the
effective Hamiltonian can be viewed as a high-frequency autonomous
description extracted from the cut-off oscillatory dynamics before the
limit \(N\to\infty\) is taken.

\section{Spectral cut-off traces and renormalized real-time amplitudes}
\label{app:cutoff-traces}

The aim of this appendix is to record a possible trace-theoretic
extension of the spectral cut-off construction developed in the main
text.  The convergence theorem proved in the paper concerns vectors and
propagators in the strong topology.  Trace expressions are more
singular.  Even when the propagator \(U(t,s)\) is a well-defined unitary
operator on \(\Hilb\), it is usually not trace-class.  Thus expressions
such as
\[
        \Tr(U(t,s))
\]
are not defined in general.

The spectral cut-off method provides a natural way of assigning finite
approximations to such quantities.  If
\[
        P_N=\mathbf 1_{[0,N]}(H_0)
\]
is the spectral projection associated with the reference operator
\(H_0\), then
\[
        \Tr(P_NU(t,s)P_N)
\]
is finite whenever \(P_N\Hilb\) is finite-dimensional.  The question is
whether this family admits an asymptotic expansion as \(N\to\infty\),
and whether a finite part may be extracted.

\subsection{Spectral cut-off traces}

Assume in this subsection that \(H_0\) is a positive self-adjoint
operator with compact resolvent.  Let
\[
        P_N=\mathbf 1_{[0,N]}(H_0).
\]
For a bounded operator \(A\) on \(\Hilb\), we define its \(H_0\)-spectral
cut-off trace by
\[
        \Tr_N(A)
        :=
        \Tr(P_NAP_N).
\]
This is a finite-dimensional trace.

\begin{definition}[Spectral cut-off trace]
Let \(A\in\mathcal L(\Hilb)\).  The spectral cut-off trace of \(A\) at
level \(N\) is
\[
        \Tr_N(A)=\Tr(P_NAP_N).
\]
If, as \(N\to\infty\), \(\Tr_N(A)\) admits an asymptotic expansion of
the form
\[
        \Tr_N(A)
        \sim
        \sum_{j=0}^{J} c_j N^{\alpha_j}
        +
        c_{\log}\log N
        +
        c_0
        +
        o(1),
\]
with \(\alpha_j>0\), then we define the finite-part cut-off trace by
\[
        \Tr_{\mathrm{fp}}^{H_0}(A)
        :=
        c_0.
\]
\end{definition}

This definition is intentionally formal at this level.  In concrete
settings, for example for classical pseudodifferential operators on a
compact manifold, the existence of such expansions follows from
standard symbolic and spectral asymptotic techniques.  In an abstract
Hilbert space, additional assumptions are necessary.
The use of finite parts and cut-off procedures is closely related to
the construction of normalized means for infinite-volume objects.  In
particular, mean values associated with infinite volume measures,
infinite products, and heuristic infinite-dimensional Lebesgue measures
were studied in \cite{Magnot2017MeanValue}.  From this point of view,
the cut-off trace
\[
        \Tr_N(A)=\Tr(P_NAP_N)
\]
may be regarded as an operator-theoretic analogue of an exhaustion
procedure: one first assigns a finite value to the truncated object and
then studies the existence of a limit, a normalized limit, or a finite
part as the cut-off is removed.
\begin{remark}
The finite part depends in general on the choice of the reference
operator \(H_0\) and on the precise form of the cut-off.  This
dependence is not a defect of the construction; it is part of the
renormalization data.  One should distinguish between quantities which
are cut-off dependent and local anomaly terms which may have a more
intrinsic meaning.
\end{remark}

For the cut-off propagators constructed in the main text, one may
consider
\[
        \Tr\big(U_N(t,s)\big),
\]
where \(U_N(t,s)\) is the finite-dimensional propagator associated with
\[
        H_N(t)=P_NH(t)P_N.
\]
This differs slightly from \(\Tr_N(U(t,s))\), since \(U_N(t,s)\) is not
in general equal to \(P_NU(t,s)P_N\).  However, under convergence
assumptions on the cut-off dynamics, the two quantities may be expected
to have the same finite part, up to possible defect terms.

\subsection{Heat regularization}

A smoother alternative to the sharp spectral cut-off is the heat
regularization associated with \(H_0\).  For \(\varepsilon>0\), define
\[
        \Tr_\varepsilon(A)
        :=
        \Tr(\ee^{-\varepsilon H_0}A),
\]
whenever the product \(\ee^{-\varepsilon H_0}A\) is trace-class.

\begin{definition}[Heat-regularized finite part]
Let \(A\in\mathcal L(\Hilb)\).  Assume that
\(\ee^{-\varepsilon H_0}A\) is trace-class for every
\(\varepsilon>0\), and that
\[
        \Tr(\ee^{-\varepsilon H_0}A)
\]
admits an asymptotic expansion as \(\varepsilon\to0^+\) of the form
\[
        \Tr(\ee^{-\varepsilon H_0}A)
        \sim
        \sum_{j=0}^{J} a_j\varepsilon^{-\beta_j}
        +
        a_{\log}\log\varepsilon
        +
        a_0
        +
        o(1),
        \qquad \beta_j>0.
\]
We define
\[
        \Tr_{\mathrm{ren}}^{H_0}(A)
        :=
        a_0.
\]
\end{definition}

The heat regularization is often better adapted to evolution equations
than the sharp cut-off \(P_N\).  It is also closer to the
Feynman--Kac point of view: the factor
\[
        \ee^{-\varepsilon H_0}
\]
introduces an infinitesimal imaginary-time damping, whereas the
unitary propagator
\[
        U(t,s)
\]
carries the real-time oscillatory dynamics.

For example, one may consider
\[
        Z_\varepsilon(t,s)
        :=
        \Tr\big(\ee^{-\varepsilon H_0}U(t,s)\big).
\]
Formally, this is a heat-regularized real-time amplitude.  In path
integral language, it corresponds to an oscillatory weight damped by a
short imaginary-time contribution.

\subsection{Closed-path amplitudes}

For fixed \(N\), the trace of the cut-off propagator has a simple
oscillatory interpretation.  Indeed, the time-sliced approximation
\[
        U_{N,M}(t,s)
        =
        \prod_{j=M-1}^{0}
        \exp\left(
        -\ii(t_{j+1}-t_j)H_N(t_j)
        \right)
\]
may be represented, after inserting finite-dimensional resolutions of
the identity, as an oscillatory integral.  Taking the trace imposes a
closed-path condition.

Thus, at the formal level,
\[
        \Tr(U_{N,M}(t,s))
        =
        \int_{\text{closed cut-off paths}}
        \ee^{\ii S_{N,M}}
        A_{N,M}.
\]
Here \(S_{N,M}\) is the discrete action associated with the cut-off
Hamiltonian, and \(A_{N,M}\) is the corresponding amplitude.

The limiting procedure is therefore
\[
        \Tr(U_N(t,s))
        =
        \lim_{M\to\infty}
        \Tr(U_{N,M}(t,s)),
\]
followed, if possible, by extraction of a finite part as \(N\to\infty\):
\[
        \Tr_{\mathrm{fp}}^{H_0}(U(t,s))
        :=
        \FP_{N\to\infty}\Tr(U_N(t,s)).
\]
This formula should not be read as a definition valid in complete
generality.  Rather, it describes the guiding principle: the
renormalized real-time amplitude is obtained from closed cut-off
oscillatory paths.

\subsection{Comparison between sharp and heat cut-offs}

The sharp spectral cut-off and the heat cut-off lead to two families of
regularized traces:
\[
        \Tr_N(A)=\Tr(P_NAP_N),
\]
and
\[
        \Tr_\varepsilon(A)=\Tr(\ee^{-\varepsilon H_0}A).
\]
Under suitable spectral assumptions, these two regularizations are
related by Tauberian principles.  However, their finite parts need not
coincide without additional normalization.

For the purposes of the present paper, the sharp cut-off has the
advantage of being directly compatible with the Galerkin approximation
used to construct the dynamics.  The heat cut-off has the advantage of
being smoother and better adapted to analytic continuation and
Feynman--Kac type formulae.

In concrete applications, one may therefore study both quantities:
\[
        \FP_{N\to\infty}\Tr(U_N(t,s)),
\]
and
\[
        \FP_{\varepsilon\to0^+}
        \Tr(\ee^{-\varepsilon H_0}U(t,s)).
\]
The comparison between them is itself a renormalization problem.

\subsection{Zeta regularization}

A third regularization, closely related to the heat cut-off, is obtained
from complex powers of \(H_0\).  Let \(Q=1+H_0\), or more generally let
\(Q\) be a positive elliptic reference operator.  For \(\Re z\) large,
one may consider
\[
        \zeta_A(z)
        :=
        \Tr(AQ^{-z}).
\]
If \(\zeta_A(z)\) admits a meromorphic continuation to a neighbourhood
of \(z=0\), one may define the \(Q\)-renormalized trace by
\[
        \Tr_{\mathrm{ren}}^Q(A)
        :=
        \FP_{z=0}\Tr(AQ^{-z}).
\]

\begin{definition}[Zeta-renormalized trace]
Let \(Q\) be a positive self-adjoint reference operator.  Suppose that
\[
        z\mapsto \Tr(AQ^{-z})
\]
is holomorphic for \(\Re z\) sufficiently large and admits a meromorphic
continuation to a neighbourhood of \(0\).  The \(Q\)-renormalized trace
of \(A\) is
\[
        \Tr_{\mathrm{ren}}^Q(A)
        :=
        \FP_{z=0}\Tr(AQ^{-z}).
\]
\end{definition}

In pseudodifferential settings, this construction is closely related to
the Kontsevich--Vishik canonical trace, to weighted traces, and to the
noncommutative residue.  It is also the natural framework in which
trace-defect formulae for commutators appear.

\subsection{Trace defects and commutators}

One of the main reasons renormalized traces are relevant for effective
Hamiltonians is the commutator structure of the Floquet--Magnus
expansion.  At first order beyond the average, the effective
Hamiltonian contains terms of the form
\[
        \int_0^1\int_0^{t_1}
        [H(t_1),H(t_2)]
        \dd t_2\,\dd t_1.
\]
Higher-order corrections contain nested commutators.

For an ordinary trace,
\[
        \Tr([A,B])=0
\]
whenever the expression is well defined and trace-class.  For a
renormalized trace, this cyclicity property generally fails:
\[
        \Tr_{\mathrm{ren}}^Q([A,B])\neq0.
\]
The defect is often local.  In classical pseudodifferential calculi, it
is governed by residue-type expressions involving \(\log Q\).  Thus
higher-order effective Hamiltonians may produce non-trivial
renormalized trace contributions even though they are built from
commutators.

This suggests that, in periodic Hamiltonian systems, the quantity
\[
        \Tr_{\mathrm{ren}}^Q
        \left(
        \ee^{-\ii t\widehat H_{\mathrm{eff},L}^{(T)}}
        \right)
\]
may contain anomaly terms associated with the high-frequency driving.
At the infinitesimal level, these terms should be visible in expressions
such as
\[
        \Tr_{\mathrm{ren}}^Q
        \left(
        H_{\mathrm{eff},L}^{(T)}
        \right),
\]
or in regularized derivatives of the corresponding partition functions.

\begin{remark}
The term ``anomaly'' is used here in a precise trace-theoretic sense:
it refers to the failure of a regularized trace to be cyclic.  No
physical anomaly is asserted without further structural assumptions.
\end{remark}

\subsection{Effective dynamics and regularized traces}

Let \(H^{(T)}(t)=H(t/T)\) be a periodic Hamiltonian in the high-frequency
regime.  Suppose that one has constructed a finite-order effective
Hamiltonian
\[
        H_{\mathrm{eff},L}^{(T)}
        =
        \sum_{\ell=0}^{L}
        T^\ell H_{\mathrm{eff}}^{[\ell]},
\]
defined on a common core and admitting a self-adjoint extension
\[
        \widehat H_{\mathrm{eff},L}^{(T)}.
\]
Then one may compare the regularized exact amplitude
\[
        Z_\varepsilon^{(T)}(t)
        =
        \Tr\left(
        \ee^{-\varepsilon H_0}
        U^{(T)}(t,0)
        \right)
\]
with the regularized effective amplitude
\[
        Z_{\varepsilon,\mathrm{eff},L}^{(T)}(t)
        =
        \Tr\left(
        \ee^{-\varepsilon H_0}
        \ee^{-\ii t\widehat H_{\mathrm{eff},L}^{(T)}}
        \right).
\]
At stroboscopic times \(t=qT\), one expects the difference between these
quantities to reflect the accuracy of the effective Hamiltonian.  A
typical problem is to determine whether
\[
        \FP_{\varepsilon\to0^+}
        \left[
        Z_\varepsilon^{(T)}(qT)
        -
        Z_{\varepsilon,\mathrm{eff},L}^{(T)}(qT)
        \right]
        =
        O(T^{L+2})
\]
as \(T\to0\).  Such a statement does not follow directly from strong
operator convergence.  It requires additional trace-class or smoothing
estimates.

This is why the trace problem is analytically more singular than the
propagator convergence problem.  The main body of the paper constructs
solutions and propagators.  The present appendix indicates how one may
then attempt to assign finite real-time amplitudes to them.

\subsection{A possible abstract assumption}

For later use, it is convenient to isolate the kind of assumption that
would make the trace theory rigorous.

\begin{assumption}[Regularized trace asymptotics]
Let \(A(t)\) be a family of bounded operators on \(\Hilb\).  We say that
\(A(t)\) has \(H_0\)-regularized trace asymptotics on a compact interval
\(I\subset\mathbb R\) if, for every \(t\in I\),
\[
        \ee^{-\varepsilon H_0}A(t)
\]
is trace-class for \(\varepsilon>0\), and if the asymptotic expansion
\[
        \Tr(\ee^{-\varepsilon H_0}A(t))
        \sim
        \sum_{j=0}^{J} a_j(t)\varepsilon^{-\beta_j}
        +
        a_{\log}(t)\log\varepsilon
        +
        a_0(t)
        +
        o(1)
\]
holds as \(\varepsilon\to0^+\), uniformly for \(t\in I\).
\end{assumption}

Under this assumption one may define
\[
        \Tr_{\mathrm{ren}}^{H_0}(A(t))=a_0(t),
\]
uniformly in time.  If \(A(t)=U(t,s)\), this gives a renormalized
real-time amplitude.  If
\[
        A(t)=\ee^{-\ii t\widehat H_{\mathrm{eff},L}^{(T)}},
\]
it gives the corresponding effective amplitude.

\subsection{Conclusion}

The spectral cut-off construction developed in the paper has two
levels.  The first level is dynamical: it constructs solutions of
non-autonomous Hamiltonian equations as limits of finite-dimensional
oscillatory integrals.  This is the part treated in the main text.

The second level is trace-theoretic: once finite-dimensional
oscillatory amplitudes have been constructed, one may take their traces,
interpret them as closed-path amplitudes, and attempt to extract finite
parts as the cut-off is removed.  This second level is not automatic.
It requires additional spectral asymptotics, smoothing estimates, or
pseudodifferential structure.

This perspective is consistent with the philosophy of generalized
means developed in \cite{Magnot2017MeanValue}: divergent global
quantities are not treated as ordinary integrals, but are approached
through prescribed finite-dimensional or finite-volume exhaustions and
renormalized limiting procedures.

In periodic systems, effective Hamiltonians provide a further
simplification.  They replace the exact time-dependent dynamics, at
stroboscopic times and in the high-frequency regime, by an autonomous
evolution.  Combining this with cut-off traces leads naturally to
renormalized effective amplitudes.  For the Schr\"odinger, first-order pseudodifferential, and compatible
higher-order classes, the commutator estimates above construct the
propagator directly and yield quantitative spectral convergence.  For
the remaining classes, the stated mapping estimates imply convergence
whenever the corresponding non-autonomous evolution preserves the
required Hilbert scale.



\end{document}